\documentclass{amsart}

\usepackage{amsthm} 
\usepackage{color}

\usepackage{graphicx,epsfig}
\usepackage{epstopdf}
\usepackage{amsmath, amssymb, latexsym, euscript,amscd}
\usepackage{url}
\usepackage[all]{xy}
\usepackage{psfrag}
\usepackage{mathrsfs}

\usepackage{hyperref}
\hypersetup{
pdftoolbar=true,
pdfmenubar=true,
colorlinks=true,
linkcolor=black,
citecolor=black,
filecolor=black,
urlcolor=black
}

\setlength{\topmargin}{-1.cm}
\setlength{\headsep}{1.6cm}
\setlength{\evensidemargin}{.7cm}
\setlength{\oddsidemargin}{.7cm}
\setlength{\textheight}{21.cm}
\setlength{\textwidth}{15.2cm}

\newcounter{notes}%


\definecolor{darkgreen}{rgb}{0.0, 0.5, 0.0}

\newtheorem{theorem}{Theorem}[section]
\newtheorem{lemma}[theorem]{Lemma}
\newtheorem{corollary}[theorem]{Corollary} 
\newtheorem{definition}[theorem]{Definition}

\def\gap{\vspace{.3cm}\noindent}

\def\smallskip{\vspace{.15cm}}
\def\medskip{\vspace{.3cm}}
\def\text{\mbox}
\def\RR{{\mathbb R}}

\def\ZZ{{\mathbb Z}}
\def\PP{{\mathbb P}\,}
\def\HH{{\mathbb H}}

\def\P{{\mathbb P}}

\def\RP2{\operatorname{\mathbb{R}P}^2}
\def\RP3{\operatorname{\mathbb{R}P}^3}
\def\RP{\operatorname{\mathbb{R}P}}

\def\Fr{{\operatorname{Fr\;}}}

\def\interior{\operatorname{int}}
\def\SL{\operatorname{SL}}
\def\CH{\operatorname{CH}}

\def\SO{\operatorname{SO}}
\def\PO{\operatorname{PO}}
\def\PGL{\operatorname{PGL}}
\def\GL{\operatorname{GL}}

\def\Aff{\operatorname{Aff}}
\def\Hom{\operatorname{Hom}}

\def\cl{\operatorname{cl}}

\def\Aff{\operatorname{Aff}}
\def\tr{\operatorname{tr}}

\def\Diag{\operatorname{Diag}}

\def\C2{\operatorname{C^2}}
\def\halfgap{\vspace{.05in}}

\def\Fcal{\mathcal F}

\def\Image{\operatorname{Im}}
\def\Id{\operatorname{I}}

\def\NN{\mathbb N}

\def\O{\operatorname{O}}
\def\End{\operatorname{End}}

\def\UT{\operatorname{UT}}

\def\bdy{\partial}

\def\cl{\operatorname{cl}}

\def\stab{\operatorname{stab}}


\def\rank{{\bf r}}

  \def\UT{\operatorname{UT}}

\newcommand{\bv}{\left[\begin{array}{c}}
\newcommand{\ev}{\end{array}\right]}
\newcommand{\bbmat}{\begin{bmatrix}} 
\newcommand{\ebmat}{\end{bmatrix}}
\newcommand{\bmat}{\begin{matrix}} 
\newcommand{\emat}{\end{matrix}}
\newcommand{\bpmat}{\begin{pmatrix}} 
\newcommand{\epmat}{\end{pmatrix}}

\begin{document}
\title{On properly convex real-projective manifolds with Generalized Cusps}
\date{\today}
\begin{abstract} Suppose $E$ is an end of an irreducible, properly convex, real-projective $n$-manifold $M$. If 
$\pi_1E$ contains a subgroup of finite index isomorphic to  $\ZZ^{n-1}$,
 and $E\hookrightarrow M$ is $\pi_1$-injective,
then $E$  is a generalized cusp. We list some consequences when all ends are of this type.
Under certain hypotheses we prove the holonomy of a properly convex manifold is irreducible.
\end{abstract}

\noindent\address{DC: Department of Mathematics, University of California, Santa Barbara, CA 93106, UlSA}\\
\address{ST: School of Mathematics and Statistics, The University of Sydney, NSW 2006, Australia}
\address{}\\
\email{cooper@math.ucsb.edu}\\
\email{stephan.tillmann@sydney.edu.au}

\author{Daryl Cooper  and Stephan Tillmann}

\maketitle

A \emph{generalized cusp} is a properly convex, real-projective manifold $C$
that is diffeomorphic to $[0, \infty ) \times \partial C$ 
such that $\bdy C$ contains no line segment, and $\pi_1C$ is virtually nilpotent.

Generalized cusps were introduced in \cite{CLT2} and their theory developed in \cite{BCL1, BCL2}.
It was shown in \cite{BCL1} that if $C$ has compact boundary then 
the fundamental  group  is virtually abelian,
but see
\cite{C} and  \cite{CLT1}(5.9) for counter-examples when $\bdy C$ is not compact. In the rest of this paper 
the term {\em generalized cusp} will be only be used in the narrow sense that
 the boundary  is compact.

There is a growing literature concerning properly convex manifolds with ends of this type
\cite{BallasFig8, Ballastype2, BCL1, BCL2, BDL, BallasMarquis, choiends, Choibook, C, CLT1, CLT2, AL, Marq1}.

In applications it is
desirable to replace the geometric hypothesis on the boundary of a generalized cusp by an algebraic one.
This is done in Theorem~(\ref{virtab}),
 and is needed for forthcoming
work  by the authors \cite{CLT2}. 
 Theorem~(\ref{cuspends}) lists some consequences when all the ends are generalized cusps, and the fundamental group
 is relatively hyperbolic.

A  properly
convex set $\Omega\subset\RP^n$ is {\em reducible} if there
are disjoint proper subspaces $\RP^a,\RP^b\subsetneq\RP^n$ and 
 every point in $\Omega$ is contained in a line segment in $\Omega$
with one endpoint in  $\RP^a\cap \cl\Omega$ and the other in $\RP^b\cap\cl\Omega$.
Otherwise $\Omega$ is {\em irreducible}.
A properly convex manifold $M=\Omega/\Gamma$
is {\em irreducible} if $\Omega$ is irreducible.  Given $\Omega\subset\RP^n$ 
 recall that $\Fr\Omega=\cl(\Omega)\setminus\interior(\Omega)$ and $\bdy\Omega=\Omega\cap\Fr\Omega$.

A  non-compact 
submanifold $E\subset M$ is called an {\em end of $M$} if $E$ is the closure of  a component of $M\setminus\bdy E$, and for all $i\in\NN$
 there are compact submanifolds $K_i\subset K_{i+1}$ with $\bdy E\subset K_i\subset E$, and
 $E\setminus K_i$ is connected, and  $E=\cup K_i$. This is a special case of a more general definition
of {\em end} that suffices for our applications, and makes various statements simpler.
An end, $E$, of a properly convex manifold is called a {\em generalized cusp of $M$} if $E$ deformation retracts to a generalized cusp $C$.
 A subspace $A\subset B$ is {\em $\pi_1$-injective}
if, whenever a loop in $A$ is contractible in $B$, then the loop is contractible in $A$.

\begin{theorem}[irreducible implies generalized cusps]\label{virtab} Suppose $M$ is an irreducible properly convex $n$-manifold, and
$C$ is an end of $M$. If $C$ is $\pi_1$-injective, and $\pi_1C$ contains a subgroup of finite index isomorphic to $\ZZ^{n-1}$,
 then $C$ is a  generalized cusp of $M$.
\end{theorem}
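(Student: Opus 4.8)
The plan is to analyze the developing map of the end $C$ and show that its holonomy fixes a unique point of $\partial\Omega$, which we then promote to a generalized cusp structure. First I would pass to a finite cover so that $\pi_1 C \cong \ZZ^{n-1}$ acting on $\Omega\subset\RP^n$; the hypothesis that $C\hookrightarrow M$ is $\pi_1$-injective guarantees that this subgroup $\Gamma_0 \le \Gamma = \pi_1 M$ is still discrete and acts on the same $\Omega$. The group $\Gamma_0$ is free abelian of rank $n-1$ and preserves the properly convex domain $\Omega$, so by the theory of abelian groups acting on properly convex sets (the classification of such actions via the eigenvalue structure of commuting elements, as developed for generalized cusps), $\Gamma_0$ is simultaneously conjugate into an upper-triangular form. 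The key structural input is that an abelian group of this maximal rank $n-1$ preserving a properly convex set must have a common fixed point in $\RP^n$; I would argue that the Zariski closure $G$ of $\Gamma_0$ is a connected abelian subgroup and that the orbit map from $G$ into $\partial\Omega$ has image a properly embedded copy of something like $\RR^{n-1}$ foliating a piece of $\partial\Omega$.

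Next I would identify the end $C$ with a region $\Omega_0/\Gamma_0$ where $\Omega_0 \subset \Omega$ is a $\Gamma_0$-invariant convex subset bounded by a properly embedded hypersurface. The crucial geometric claim is that $\partial\Omega$ near the orbit of $\Gamma_0$ contains \emph{no} line segment — equivalently, the fixed point set of $\Gamma_0$ on $\partial\Omega$ is a single point $p$, and $\partial\Omega$ is strictly convex in a neighborhood of the $\Gamma_0$-orbit closure. Here is where irreducibility of $\Omega$ enters decisively: if $\partial\Omega$ contained a segment $s$ invariant under (a finite-index subgroup of) $\Gamma_0$, one could use the endpoints of $s$ together with the fixed-point structure to build a pair of disjoint invariant proper projective subspaces $\RP^a, \RP^b$ realizing a reducibility decomposition of $\Omega$, contradicting the hypothesis. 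So I would set up the dichotomy: either $\Gamma_0$ (virtually) fixes a segment in $\partial\Omega$ and $\Omega$ is reducible, or $\partial\Omega$ has no segment in the relevant region and the quotient is a genuine generalized cusp $C$ with compact boundary $\partial C = \partial\Omega_0/\Gamma_0$, which is a closed $(n-1)$-manifold since $\Gamma_0\cong\ZZ^{n-1}$ acts cocompactly on the level hypersurface.

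I expect the main obstacle to be the passage from ``$\Gamma_0$ has rank $n-1$ and preserves $\Omega$'' to the rigid geometric conclusion that its orbit is a cocompact horosphere-like hypersurface in $\partial\Omega$ with no segments — that is, ruling out the ``degenerate'' configurations where the $\ZZ^{n-1}$ acts with too small an orbit or preserves a lower-dimensional flat. Controlling this requires understanding the joint eigenvalue data of the abelian group: one must show that the dual action (on the convex set of linear functionals nonnegative on $\cl\Omega$) also has an invariant point, and that the two invariant points are distinct, forcing the ``cusp'' normal form. The compactness of $\partial C$ then forces the group to be not just abelian but to act on a horosphere, which is precisely the content of the generalized cusp hypothesis once we invoke that $\pi_1 C$ is virtually nilpotent (here even abelian). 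Finally I would assemble these pieces, citing the earlier structural results on generalized cusps to conclude that $C$ deformation retracts onto $C' = [0,\infty)\times \partial C$ with the required properly convex structure, completing the proof.
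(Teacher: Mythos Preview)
Your outline has the right overall dichotomy---either the relevant hypersurface is strictly convex and you get a cusp, or a flat forces reducibility---but there are two genuine gaps that prevent it from going through as written.

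First, you are testing the wrong hypersurface. The definition of a generalized cusp requires that $\bdy C$, a hypersurface lying \emph{inside} $\Omega$, contain no line segment. You instead examine $\bdy\Omega=\Fr\Omega$ near the $\Gamma_0$-orbit and claim that ``the fixed point set of $\Gamma_0$ on $\bdy\Omega$ is a single point $p$''. This is false for generalized cusps of positive rank: for a diagonal-type cusp of rank $r>0$ the end flat in $\Fr\Omega$ is an $r$-simplex, and $\Gamma_0$ fixes every vertex. So the presence of a $\Gamma_0$-invariant segment in $\Fr\Omega$ does \emph{not} contradict irreducibility, and your proposed dichotomy collapses. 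The paper avoids this by working entirely with the $T$-orbit $S=T\cdot x$ of an interior point $x\in\Omega$, where $T\cong\RR^{n-1}$ is the Zariski closure of $\Gamma_0$; the relevant strict convexity is that of $S$, not of $\Fr\Omega$.

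Second, even after correcting the hypersurface, your route from ``$S$ contains a flat'' to ``$\Omega$ is reducible'' is only a sketch (``use the endpoints of $s$ \ldots\ to build a pair of disjoint invariant proper projective subspaces''), and the missing idea is substantial. The paper's mechanism is quite different and uses a structure you do not mention: a \emph{radial flow} $\Phi$ centralizing $T$, so that $T\oplus\Phi$ acts simply transitively on an open set $\Omega^+\supset\Omega$. If $S$ contains a maximal flat $F$, then $\stab_T(F)$ acts simply transitively on $F$, so $F$ is an open simplex and carries a one-parameter subgroup $L$ whose orbits are line segments. Because $L$ commutes with $T\oplus\Phi$, the segment property propagates from $F$ to the whole open set $\Omega^+$; this forces $L$ to be a linear flow. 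Since $\Gamma_0$ is a lattice in $T\supset L$, it approaches $L$ at infinity, whence $L$ preserves $\Omega$ and $\Omega$ is reducible. Without the radial flow there is no apparent way to promote a single flat in $S$ to a global linear flow preserving $\Omega$, and your endpoint argument does not supply one.
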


Recall that a {\em radial flow} (\cite{CLT2} p.\thinspace 1384) is a one-parameter subgroup of $\PGL(n+1,\RR)$ such that the orbit of every point is 
a proper subset of a projective line, and
there is a point $c\in\RP^n$  called the {\em center  of the flow} 
that is in the closure of every flowline.
The stationary set is $H\cup\{c\}$, where $H\cong\RP^{n-1}$.
A {\em displacing hyperplane} (\cite{CLT2} p.\thinspace 1385) for a radial flow $\Phi$ is a hyperplane $P\ne H$ such that
$c\notin P$.  
 
       \gap
 
\emph{Sketch proof of~(\ref{virtab}).} We may assume $\pi_1C\cong\ZZ^{n-1}$, and 
  the holnomy of $C$ is a lattice $\Gamma$ in an abelian upper-triangular subgroup $T\subset\PGL(n+1,\RR)$. 
  Moreover there is
a {\em radial flow} $\Phi$ that centralizes $T$ and fixes a hyperplane $H$,
that is disjoint from $\Omega$. Furthermore the orbit of a generic point under $T\oplus\Phi$ is open. 
 The 
$T$-orbit of some point deep inside $\Omega$ is a convex hypersurface $S\subset\Omega$.
 Then 
 $C$ is a generalized cusp if and only if  $S$ contains no line segment. 
 If  $S$ is strictly convex we are done, otherwise
$S$ contains a maximal flat $F$. The subgroup $\stab(F)\subset T$ that preserves $F$ acts
 simply transitively on $F$. Hence $F$ is an open simplex, and there is a one-parameter
 subgroup $L\subset \stab(F)$ whose orbits in $F$ are line segments. Since $L$ commutes with $T\oplus\Phi$
  there is an open set of points whose $L$-orbits are line segments. This implies $L$
  preserves  $\Omega$, and hence $\Omega$ is reducible.\qed

A complete proof of Theorem~(\ref{virtab}) is given in Section~\ref{sec:Generalized cusps}. 
Section~\ref{sec:Irreducibility} provides the following:

\begin{theorem}[irreducible holonomy]\label{irred} Suppose $\Omega$ is properly
convex and $\Gamma\subset\PGL(\Omega)$ is finitely
generated, discrete and torsion free, and contains no non-trivial normal
abelian subgroup.
Suppose either $M=\Omega/\Gamma$ is  closed, or there is a subgroup  $G\cong\ZZ^{n-1}$ 
and $|\Gamma:G|=\infty$. Then  $\Gamma$ does not preserve any proper projective subspace. 
 \end{theorem}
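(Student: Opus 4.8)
The plan is to argue by contradiction: suppose the lift $\tilde\Gamma\subset\SL^{\pm}(n+1,\RR)$ preserves a proper linear subspace of $\RR^{n+1}$, and derive a contradiction, handling the closed alternative and the $\ZZ^{n-1}$ alternative in parallel. First I would kill the unipotent part of the reducibility. Refine a maximal chain of $\Gamma$-invariant subspaces to a composition series $0=V_0\subsetneq V_1\subsetneq\cdots\subsetneq V_r=\RR^{n+1}$ with each $V_i/V_{i-1}$ an irreducible $\Gamma$-module; reducibility forces $r\ge 2$. The kernel $U$ of the graded representation $\Gamma\to\prod_i\GL(V_i/V_{i-1})$ consists of unipotent matrices, so it is a nilpotent group, and since the identity is the only unipotent scalar, $U$ injects into $\Gamma$ as a normal nilpotent subgroup. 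If $U\neq 1$ its centre is a nontrivial normal abelian subgroup of $\Gamma$, contrary to hypothesis; so $U=1$, the graded representation is faithful, and in particular $\Gamma$ embeds into a product of general linear groups of lower-dimensional spaces. Fix a minimal (hence irreducible) proper $\Gamma$-invariant subspace $W=\P(V_1)\subsetneq\RP^n$.

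Next I would dispose of two of the three possible positions of $W$ relative to $\Omega$, using cohomological dimension and duality. If $W\cap\Omega\neq\emptyset$ then $W\cap\Omega$ is a $\Gamma$-invariant properly convex open subset of $W$, of dimension $p=\dim W<n$, on which $\Gamma$ acts freely (the pointwise stabiliser of $W$ fixes a point of $\Omega$) and properly discontinuously, so $\operatorname{cd}(\Gamma)\le p\le n-1$. When $M=\Omega/\Gamma$ is closed this contradicts $\operatorname{cd}(\Gamma)=n$. When $\Gamma$ instead contains $G\cong\ZZ^{n-1}$ of infinite index we have $\operatorname{cd}(\Gamma)\ge n-1$, so $p=n-1$, and the aspherical $(n-1)$-manifold $(W\cap\Omega)/\Gamma$ is either non-compact, forcing $\operatorname{cd}(\Gamma)\le n-2$, or compact, making $\Gamma$ a Poincar\'e duality group of dimension $n-1$ that contains the equidimensional Poincar\'e duality subgroup $\ZZ^{n-1}$, whence $[\Gamma:G]<\infty$ by Strebel's theorem -- a contradiction either way. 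If $W\cap\cl\Omega=\emptyset$, pass to the dual domain $\Omega^{*}$ in the dual projective space, which is properly convex, $\Gamma$-invariant, has compact quotient exactly when $\Omega/\Gamma$ does, and for which $\Gamma$ satisfies the same hypotheses; since $V_1\cap\cl\tilde\Omega=0$ the image of $\tilde\Omega$ in $\RR^{n+1}/V_1$ is a sharp cone, so there is a supporting functional of $\tilde\Omega$ vanishing on $V_1$, i.e. the proper $\Gamma$-invariant subspace $W^{\perp}$ meets $\Omega^{*}$, and the previous case applies. So \emph{every} proper $\Gamma$-invariant subspace meets $\bdy\Omega$ yet is disjoint from $\Omega$.

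When $M=\Omega/\Gamma$ is closed this already finishes the proof. Here $\Omega$ is divisible, so the structure theory of divisible convex sets shows that reducibility of $\Gamma$ forces a nontrivial direct-sum decomposition $\Omega=\Omega_1\oplus\cdots\oplus\Omega_k$ into properly convex sets with $k\ge 2$, and that a finite-index subgroup of $\Gamma$ splits off, as a direct factor, the rank-$(k-1)$ lattice of integral "balancing" radial flows among the summands. These flows constitute a subgroup of $\operatorname{Aut}(\Omega)$ that is centralised by $\prod_i\operatorname{Aut}(\Omega_i)$ and normalised by the permutations of the summands, hence this $\ZZ^{k-1}$ is normal in $\Gamma$ -- a nontrivial normal abelian subgroup, contradiction.

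There remains the case $M$ non-compact with $G\cong\ZZ^{n-1}$ of infinite index, and $W$ a minimal proper $\Gamma$-invariant subspace with $W\cap\Omega=\emptyset$ and $\sigma:=W\cap\cl\Omega$ a nonempty $\Gamma$-invariant compact convex subset of $\bdy\Omega$; by minimality of $W$ either $W$ is a $\Gamma$-fixed extreme point of $\cl\Omega$, or $\sigma$ spans $W$ and its relative interior is a $\Gamma$-invariant properly convex domain lying in $\bdy\Omega$. This is the hard part, and it is precisely where Theorem~\ref{virtab} and the classification of generalized cusps enter. The approach I would take is to exploit the structure of a maximal-rank discrete abelian group acting on a properly convex set that underlies that theorem: $G$ lies in an $(n-1)$-dimensional abelian $T\subset\PGL(n+1,\RR)$ commuting with a radial flow $\Phi$ whose fixed hyperplane $H$ is disjoint from $\Omega$, and the $T$-invariant subspaces are completely understood. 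Running the $G$-invariance of $W$ (hence of $\sigma$, of its relative interior, and of the joint structure of $T$, $\Phi$, $H$) through this description should yield one of: a $\Gamma$-invariant hyperplane disjoint from $\Omega$, so that $\Gamma$ acts properly on a properly convex domain of dimension $<n-1$ and $\operatorname{cd}(\Gamma)<n-1$, impossible; a nontrivial $\Gamma$-invariant subgroup of $\Phi$, i.e. a normal abelian subgroup, impossible; or a splitting pushing $\operatorname{cd}(\Gamma)$ below $n-1$, again impossible since $\operatorname{cd}(\Gamma)\ge\operatorname{cd}(\ZZ^{n-1})=n-1$. The main obstacle throughout is exactly this last configuration -- an invariant subspace touching $\bdy\Omega$ only, in particular a $\Gamma$-fixed boundary point -- where neither the cohomological-dimension bookkeeping nor the "no normal abelian" hypothesis is decisive by itself, and where one must pin down precisely how $\Gamma$ and its maximal-rank abelian subgroup sit relative to the boundary faces of $\Omega$; this is what Section~\ref{sec:Irreducibility} has to carry out in detail.
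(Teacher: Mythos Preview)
Your treatment of the closed case and of the two ``easy'' positions of an invariant subspace $W$ (namely $W\cap\Omega\neq\emptyset$ and $W\cap\cl\Omega=\emptyset$) is sound and roughly parallels the paper's Lemmas~\ref{redbdy} and~\ref{redsubgrp}, though your composition-series/unipotent-radical reduction is not needed and does not appear in the paper. The closed case is, in both approaches, an appeal to Benoist (the paper's Theorem~\ref{closedirred}).

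The genuine gap is the case you yourself flag as ``the hard part'': $M$ non-compact, $G\cong\ZZ^{n-1}$ of infinite index, and $W$ meeting $\Fr\Omega$ but not $\Omega$. Your sketch there (``should yield one of\ldots'') is not a proof, and the mechanism you propose --- pushing $G$-invariance of $W$ through the $T\oplus\Phi$ structure --- is not how the paper proceeds. The paper does \emph{not} analyse the position of a hypothetical invariant $W$ at all in the non-closed case. Instead it applies the quasi-cusp dichotomy of Theorem~\ref{prop} to $\Omega/G$: either $\Omega/G$ contains a generalized cusp, in which case Lemma~\ref{specialsubgrp} (using the strictly convex cusp boundary and $|\Gamma:G|=\infty$) gives irreducibility of $\Gamma$ directly; or $\Omega$ is reducible \emph{as a convex set} (a join $\Omega_U*\Omega_V$), which is far stronger than reducibility of the representation. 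In that branch Lemma~\ref{plusZ} manufactures a properly convex manifold $\Omega/\Gamma^+$ with $\Gamma^+\cong\Gamma\times\ZZ$; then $G\times\ZZ\cong\ZZ^n$ acts on the $n$-dimensional $\Omega$ with $H_n\neq 0$, hence cocompactly, forcing $|\Gamma:G|=|\Gamma^+:G^+|<\infty$, contradicting the standing hypothesis and showing this branch is vacuous. The idea you are missing is precisely this upgrade from ``$\Gamma$ reducible'' to ``$\Omega$ reducible'' via Theorem~\ref{prop}, which is what makes the $\Gamma\times\ZZ$ trick of Lemma~\ref{plusZ} available and bypasses the boundary-touching configuration entirely.
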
   
 
For example, this applies if $M$ is the interior of a compact $n$-manifold
that contains an embedded $\pi_1$-injective torus.
Theorems~(\ref{virtab}) and (\ref{irred})  imply:

\begin{theorem}[Generalized cusps are completely general]\label{mainthm}
Suppose $M$ is a properly convex $n$-manifold, and
$C$ is an end of $M$ and 
\begin{itemize}
\item $\pi_1C$ contains a subgroup of finite index isomorphic to $\ZZ^{n-1}$,
\item $C$ is $\pi_1$-injective,
\item $\pi_1M$ does not contain a non-trivial normal abelian subgroup,
\item $|\pi_1M:\pi_1C|=\infty$,
\end{itemize}
 then $C$ is a generalized cusp.
\end{theorem}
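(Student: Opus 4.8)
The plan is to deduce from the hypotheses that $M$ is irreducible and then quote Theorem~(\ref{virtab}). Write $M=\Omega/\Gamma$ with $\Omega\subset\RP^n$ properly convex and $\Gamma=\pi_1M$; since $M$ is a manifold, $\Gamma$ acts freely and properly discontinuously on $\Omega$, so it is discrete and torsion free. Because $C$ is $\pi_1$-injective, $\pi_1C$ embeds in $\Gamma$, and the image $G\le\Gamma$ of a finite-index copy of $\ZZ^{n-1}$ in $\pi_1C$ satisfies $G\cong\ZZ^{n-1}$ and $|\Gamma:G|=|\Gamma:\pi_1C|\cdot|\pi_1C:\ZZ^{n-1}|=\infty$. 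With the hypothesis that $\Gamma$ has no non-trivial normal abelian subgroup (and finite generation of $\Gamma$), Theorem~(\ref{irred}) then shows that $\Gamma$ preserves no proper projective subspace.

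It remains to upgrade ``no invariant proper subspace'' to irreducibility of $\Omega$, and this is the crux. Suppose $\Omega$ is reducible. Then the cone $\widetilde\Omega\subset\RR^{n+1}$ over $\Omega$ has a unique decomposition as a direct sum of indecomposable cones $\widetilde\Omega=C_1\oplus\dots\oplus C_k$, $k\ge2$, along $\RR^{n+1}=W_1\oplus\dots\oplus W_k$, and the corresponding projective factors $\Omega_i=\P(C_i)$ are irreducible of dimension $d_i=\dim W_i-1$. By uniqueness, $\Gamma$ permutes $\{W_1,\dots,W_k\}$. If this action is not transitive, the sum of the subspaces in a proper $\Gamma$-orbit is a $\Gamma$-invariant proper subspace, contradicting the last paragraph; so $\Gamma$ permutes the $W_i$ transitively and, in particular, the $d_i$ share a common value $d-1$, with $kd=n+1$. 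The relative-scaling torus $T\cong\RR^{k-1}$ (rescale the $i$-th summand by $t_i>0$, modulo global scalars) is intrinsic to the decomposition, hence normalised by $\Gamma$, so $\Gamma\cap T$ is a normal abelian subgroup of $\Gamma$ and is therefore trivial; consequently the finite-index normal subgroup $\Gamma_0=\ker(\Gamma\to\operatorname{Sym}\{W_i\})$ injects into $\prod_i\operatorname{Aut}(\Omega_i)$.

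Now I distinguish two cases. If $d=1$, every factor is a point, $\Omega$ is an $n$-simplex, $\Gamma_0$ lies in the diagonal torus $T$, so $\Gamma_0=1$ and $\Gamma$ is finite, contradicting $\ZZ^{n-1}\le\Gamma$ and $n\ge2$. If $d\ge2$, set $G_0=G\cap\Gamma_0\cong\ZZ^{n-1}$; its image in each $\operatorname{Aut}(\Omega_i)$ is a finitely generated abelian group of rank at most $\dim\Omega_i=d-1$, since a properly convex set of dimension $m$ admits no virtually-$\ZZ^r$ group of automorphisms with $r>m$. As $G_0$ embeds into the product of these images, $n-1\le k(d-1)=(n+1)-k$, forcing $k=2$ and each rank to equal $d-1=\dim\Omega_i$; but a properly convex set of dimension $m\ge1$ carrying a virtually-$\ZZ^m$ automorphism group is an $m$-simplex, hence reducible, contradicting the irreducibility of the factor $\Omega_i$. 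So $\Omega$ is irreducible, and Theorem~(\ref{virtab}) gives that $C$ is a generalized cusp. The main obstacle is this last upgrade: ruling out a nontrivial join decomposition whose factors $\Gamma$ permutes transitively. The rank count is what makes it work --- a cusp group $\ZZ^{n-1}$ has so nearly maximal rank that, inside a transitively permuted join, it would have to saturate each factor, forcing the factors to be simplices and hence reducible. One must still verify that the canonical cone decomposition and its intrinsic scaling torus behave as claimed under $\Gamma$, and use the standard facts that the rank of a virtually abelian automorphism group of a properly convex set is at most its dimension, with equality only for simplices.
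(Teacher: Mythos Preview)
You correctly spot that the \emph{statement} of (\ref{irred}) only yields ``no $\Gamma$-invariant proper subspace'', whereas (\ref{virtab}) needs ``$\Omega$ irreducible'', and that these differ precisely when $\Gamma$ permutes the factors of a join decomposition transitively. Your attempt to close this gap by a rank count, however, has a genuine hole. The assertion that a properly convex set of dimension $m$ admits no virtually-$\ZZ^r$ automorphism group with $r>m$ is false without a discreteness hypothesis: if $\Omega_i$ is the round $(d-1)$-ball then $\PGL(\Omega_i)=\PO(d-1,1)\supset\SO(d-1)$, and any circle inside $\SO(d-1)$ already contains copies of $\ZZ^r$ for every $r$. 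In your set-up the projection of $G_0$ to $\PGL(\Omega_i)$ need not be discrete --- you only know $G_0$ is discrete in $\PGL(n+1,\RR)$, and discreteness is not preserved by dividing out the scaling torus and then projecting to a factor --- so the bound $r_i\le d-1$ is unjustified, and with it both the conclusion $k\le2$ and the ``equality forces a simplex'' step (which likewise needs a discrete, cocompact action).

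The paper's route is different and sidesteps this. The argument you want is already inside the \emph{proof} of (\ref{irred}): if $\Omega$ is reducible, Lemma~(\ref{plusZ}) produces a properly convex manifold $\Omega/\Gamma^+$ with $\Gamma^+\cong\Gamma\times\ZZ$; then $G^+:=G\times\ZZ\cong\ZZ^n$ acts freely and properly discontinuously on the $n$-dimensional $\Omega$, so $\Omega/G^+$ is a closed $K(\ZZ^n,1)$, forcing $|\Gamma^+:G^+|=|\Gamma:G|<\infty$, a contradiction. Thus under the hypotheses of (\ref{mainthm}) the domain $\Omega$ is already irreducible and (\ref{virtab}) applies directly. (Strictly, (\ref{plusZ}) as stated assumes $\Gamma$ preserves the two join factors; one first passes to the finite-index kernel of the permutation action on the irreducible summands, which is harmless for the index contradiction.)
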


      \gap
  
  In particular this applies if $M$ is homeomorphic to a complete hyperbolic manifold of finite volume.
 Theorem~(\ref{cuspends}) in Section~\ref{sec:properties} gives some useful properties of manifolds whose ends are generalized cusps.

    \gap
\emph{Acknowledgements.}  The first author thanks the University of Sydney Mathematical Research
Institute (SMRI) for partial support and hospitality during completion of this work.
  Research of the second author is supported in part under the Australian Research Council's ARC Future Fellowship FT170100316.


\section{Generalized cusps are completely general}
 \label{sec:Generalized cusps}
 
 Let $W=\RR^{n+1}$.
If $\Omega\subset\PP W$, then $\PGL \Omega \subset\PGL W$ is the subgroup that preserves $\Omega$.
We write $\SL W$ for the subgroup of $\GL W$ with determinant $\pm1$, and
 $\SL\Omega$ is the preimage in $\SL W$ of $\PGL\Omega$.
 An element  $A\in\SL \Omega$ is {\em hyperbolic} if $A$ has an eigenvalue $\lambda$ with $|\lambda|\ne 1$,
and is {\em strongly hyperbolic} if, in addition, $A$ has unique eigenvalues of largest and smallest modulus, and they
are real, and
 have algebraic multiplicity one. It is {\em elliptic} if it is conjugate into $\O(n+1)$ and
 is {\em parabolic} if it is not hyperbolic and not elliptic.
 The same terms are applied to $[A]\in\PGL \Omega $. A subgroup  is {\em parabolic} if every element
 is parabolic or trivial.

 Suppose $M^n=\Omega/\Gamma$ is properly convex and $\Gamma\cong \ZZ^{n-1}$. We will
 show that $\Gamma$ is a lattice in a subgroup $T\cong\RR^{n-1}$ of $\PGL W$.
 Moreover the orbit of a generic point under $T$ is a convex hypersurface $S$, and $M$
 is a generalized cusp if and only if $S$ is strictly convex. In the remaining case $T$
 contains a one-parameter subgroup called a {\em linear flow} whose orbits are contained in lines.
 Moroever $\Omega$ is foliated
 by orbits and  is reducible.

A {\em linear flow} is an injective homomorphism $\Phi:\RR\to\PGL(n+1,\RR)$ such that the
orbit of every point in $\RP^n$ is  a proper subset of a projective line. If $\phi:\RR\to\RR$
is an isomorphism then $\Phi\circ\phi$ is called a {\em reparameterization} of $\Phi$.
There is a homomorphism $\Psi:\RR\to\GL(n+1,\RR)$ with  $\Phi=[\Psi]$, and
$\Psi_t=\exp(t M)$ for some matrix $M$. The eigenvalues
of $M$ correspond to weights for $\Psi$. If $\Psi'$ is another homomorphism
and $\Phi=[\Psi']$, then
$\Psi'_t=\exp(\lambda t)\Psi_t=\exp(t(M+\lambda I))$.
 This operation is called {\em rescaling}. We will abuse notation by also referring to $\Psi$ as a linear flow.

The {\em stationary subset} of $\Phi$
is the subset of $\RP^n$ consisting of all points that are fixed by the flow.
A linear flow is {\em parabolic} if, 
after reparamerization, there is $\pi\in\End(\RR^{n+1})$ with $\pi^2=0$ and $\Phi_t[x]=[x+t \pi(x)]$, in which case
the stationary subset is $\PP(\ker\pi)$.
It is {\em hyperbolic} if, 
after reparamerization,  there is a direct sum decomposition $\RR^n=A\oplus B$
and $\Phi_t[a+b]=[a+\exp(t)b]$, where $a\in A$ and $b\in B$; in which case the stationary subset is $\PP(A)\sqcup\PP(B)$.

\begin{lemma}   Every linear flow  is parabolic or hyperbolic. \end{lemma}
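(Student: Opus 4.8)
\emph{Proof plan.} The plan is to read the dichotomy off a matrix $M$ with $\Psi_t=\exp(tM)$, as in the paragraph above. For $0\ne x\in W$ let $V_x=\operatorname{span}\{x,Mx,M^2x,\dots\}$ be the $M$-cyclic subspace generated by $x$. Since $\Psi_tx=\sum_{k\ge0}(t^k/k!)M^kx$, every orbit point $[\Psi_tx]$ lies in $\PP(V_x)$, and differentiating $\Psi_tx$ repeatedly at $t=0$ recovers the vectors $M^kx$, so the orbit of $[x]$ spans $\PP(V_x)$. As this orbit is contained in a projective line, $\dim V_x\le2$ for every $x$; equivalently $M^2x\in\operatorname{span}\{x,Mx\}$ for all $x\in W$.

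Let $p$ be the minimal polynomial of $M$. Some vector $x_0$ has $\dim V_{x_0}=\deg p$ (rational canonical form), so $\deg p\le2$. If $\deg p=1$ then $M$ is a scalar and $\Phi$ is trivial, contradicting injectivity of $\Phi$; hence $\deg p=2$. Next I rule out that $p$ is irreducible over $\RR$: suppose $p(t)=(t-\lambda)(t-\overline\lambda)$ with $\lambda=a+bi$, $b\ne0$, and pick a complex eigenvector $v_1+iv_2$ of $M$ for $\lambda$. Then $v_1,v_2$ are $\RR$-linearly independent (a real matrix has no real eigenvector for a non-real eigenvalue), the plane $U:=\operatorname{span}_\RR\{v_1,v_2\}$ is $M$-invariant, $M$ acts on $U$ by $v_1\mapsto av_1-bv_2$ and $v_2\mapsto bv_1+av_2$, and hence $\Psi_tv_1=\exp(at)\bigl(\cos(bt)v_1-\sin(bt)v_2\bigr)$. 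As $t$ runs over $\RR$ the point $[\Psi_tv_1]$ runs over \emph{all} of the projective line $\PP(U)$, so this orbit is not a proper subset of any projective line, contradicting the definition of a linear flow. Hence $p$ splits over $\RR$.

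Two cases remain. If $p(t)=(t-\lambda)(t-\mu)$ with $\lambda\ne\mu$ real, then $M$ is diagonalizable and $W=A\oplus B$ with $A:=\ker(M-\lambda I)\ne0$, $B:=\ker(M-\mu I)\ne0$; for $a\in A$, $b\in B$ one has $\Psi_t(a+b)=\exp(\lambda t)a+\exp(\mu t)b$, hence $\Phi_t[a+b]=[a+\exp((\mu-\lambda)t)b]$, and reparameterizing $\Phi$ by the isomorphism $t\mapsto t/(\mu-\lambda)$ yields the hyperbolic form $\Phi_t[a+b]=[a+\exp(t)b]$. If instead $p(t)=(t-\lambda)^2$ with $\lambda$ real, set $\pi:=M-\lambda I$, so $\pi^2=p(M)=0$ and $\pi\ne0$; rescaling $\Psi$ (replacing $\Psi_t$ by $\exp(-\lambda t)\Psi_t=\exp(t\pi)=I+t\pi$) gives $\Phi_t[x]=[x+t\pi(x)]$ with $\pi^2=0$, the parabolic form. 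These cases are exhaustive.

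I expect the only non-routine step to be the elimination of non-real eigenvalues: a $2\times2$ rotation-scaling block keeps \emph{every} orbit inside a projective line yet makes one orbit fill that line, so this is the one point at which the word ``proper'' in the definition of a linear flow is genuinely used. The rest is the rational canonical form of $M$, the invariance of $[\Psi]$ under rescaling, and the observation that a scalar flow is trivial.
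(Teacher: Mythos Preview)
Your proof is correct. Both you and the paper analyze the generator $M$ of the lifted flow $\Psi_t=\exp(tM)$, and both isolate the same obstruction to non-real eigenvalues (a rotation--scaling block makes some orbit fill an entire projective line, violating the word \emph{proper} in the definition). The organization differs, however. The paper proceeds by ad hoc case analysis: first it shows all eigenvalues of $M$ are real, then rules out three or more distinct eigenvalues by exhibiting an orbit $[\sum_i \exp(\alpha_i t)x_i]$ not contained in any $\RP^1$, and finally, in the one- and two-eigenvalue cases, rules out oversized Jordan blocks by explicit orbit computations. Your argument short-circuits this by observing that the $M$-cyclic subspace $V_x$ is exactly the linear span of the affine orbit through $x$, so $\dim V_x\le 2$ for every $x$ and hence the minimal polynomial of $M$ has degree at most~$2$; the remaining cases (scalar, repeated real root, distinct real roots, complex-conjugate roots) then fall out at once. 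Your route is cleaner and avoids the Jordan-block bookkeeping, at the cost of invoking rational canonical form to produce a vector whose cyclic subspace has dimension $\deg p$; the paper's route is more hands-on but a little longer.
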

\begin{proof} Let $\Psi$ be a linear flow.
The hypothesis implies every point  $0\ne x\in\RR^{n+1}$ is contained
in some 2-dimensional subspace $V$ that is preserved by the flow. Moreover 
$\Psi|V$ has real weights, otherwise the orbit of $[x]$ is $\RP^1$.
Hence all the eigenvalues of $M$ are real. Suppose there are 3 distinct eigenvalues $\alpha_i$
corresponding to three points $[x_i]\in\RP^n$ each fixed by the flow.
Then there is an orbit $[\sum_i\exp(\alpha_i t)x_i]$ that is not contained in any $\RP^1$.
If there is only one eigenvalue, by rescaling $\Psi$, we may assume it is $0$.
If $M^2\ne0$ consideration of Jordan normal form contradicts that $\Psi$
is a linear flow. In this case $\Psi$ is parabolic.

This leaves the case there are exactly two eigenvalues $\alpha\ne\beta$.
 If there is a Jordan block for $\alpha$ of size bigger than $1$,
then we may assume $\alpha=0$ by rescaling, and there is a $2$-dimensional
subspace $V=\langle a,b\rangle$ with $\Psi_t(a+b)=a + t\cdot b$.
There is also $c\ne 0$ with $\Psi_t(c)=\exp(\beta t)c$. The  orbit of $a+b+c$ is
$[(a+tb)+\exp(\beta t)c]$ which is not contained in any $\RP^1$.
Thus $M$ is diagonalizable. By rescaling we may assume
one eigenvalue $\alpha=0$ and let $A$ be the eigenspace for $\exp \alpha$
and $B$ the other eigenspace. Then $\RR^n=A\oplus B$ and $\Psi_t$ is hyperbolic.
 \end{proof}
 
 \begin{lemma}\label{opensetorbit} Suppose $\Phi\subset\PGL(n,\RR)$ is a one parameter subgroup
 and $U\subset\RP^{n-1}$  is a non-empty open set such that the orbit under $\Phi$ of each point 
 in $U$ is a proper subset of a projective line. Then $\Phi$ is a linear flow.
\end{lemma}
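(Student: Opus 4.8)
The plan is to realise $\Phi$ as a one-parameter group of matrices, to extract a single algebraic identity on its generator from the hypothesis via a Zariski-density argument, and then to classify the possibilities. Throughout we may assume $\Phi$ is non-trivial, a trivial flow not being a linear flow.

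First I would write $\Phi_t=[\exp(tM)]$ for some matrix $M$, using that every one-parameter subgroup of $\PGL(n,\RR)$ is of this form. For fixed $0\ne x\in\RR^n$ the real-analytic curve $t\mapsto\exp(tM)x$ has $\RR$-span equal to the span of its derivatives at $t=0$, namely the $M$-cyclic subspace $\langle x,Mx,M^2x,\dots\rangle$; hence the orbit of $[x]$ lies in a projective line precisely when $\dim\langle x,Mx,M^2x\rangle\le2$, equivalently $x\wedge Mx\wedge M^2x=0$. By hypothesis this vanishing holds for every $x$ in the cone over $U$, a non-empty Euclidean-open and hence Zariski-dense subset of $\RR^n$; since $x\mapsto x\wedge Mx\wedge M^2x$ is polynomial, it vanishes identically. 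Therefore $M^2x\in\langle x,Mx\rangle$ for all $x$, so by induction every $M$-cyclic subspace has dimension $\le2$, the minimal polynomial $m$ of $M$ has degree $\le2$, and --- strengthening the hypothesis --- the orbit of \emph{every} point of $\RP^{n-1}$ lies in the projective line $\PP\langle x,Mx\rangle$.

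Next I would analyse $m$. Since $\Phi$ is non-trivial, $M$ is not a scalar matrix, so $\deg m=2$ and $m$ is: (i) an irreducible real quadratic with roots $\mu,\bar\mu\notin\RR$; or (ii) $(t-\lambda)^2$ for some $\lambda\in\RR$; or (iii) $(t-\lambda)(t-\nu)$ with $\lambda\ne\nu$ in $\RR$. Case (i) is excluded by feeding back the \emph{properness} half of the hypothesis: there $M$ turns $\RR^n$ into a complex vector space on which $M$ is multiplication by $\mu$, so every $2$-plane $\langle x,Mx\rangle$ is a complex line on which $M$ restricts with the non-real eigenvalues $\mu,\bar\mu$; hence the induced one-parameter subgroup of $\PGL(\langle x,Mx\rangle)\cong\PGL(2,\RR)$ is elliptic and the orbit of $[x]$ is \emph{all} of $\PP\langle x,Mx\rangle$, contradicting the existence of a point of $U$ with proper orbit. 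In case (ii), setting $N=M-\lambda I$ gives $N\ne0$, $N^2=0$ and $\Phi_t=[\,I+tN\,]$. In case (iii), $M$ is diagonalizable with eigenspace splitting $\RR^n=A\oplus B$, both non-zero, and after rescaling and reparametrizing $\Phi_t$ acts by $[a+b]\mapsto[a+e^tb]$ with $a\in A$, $b\in B$. In cases (ii) and (iii) a short direct check shows $\Phi$ is injective and that the orbit of any point is a point, a projective line with one point deleted, or an open sub-arc --- in all cases a proper subset of a projective line. Hence $\Phi$ is a parabolic, respectively hyperbolic, linear flow.

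The hard part is that the clean identity $x\wedge Mx\wedge M^2x\equiv0$ does not by itself suffice: it is also satisfied by the elliptic generators of case (i), whose orbits fill entire projective lines, so properness has to be reintroduced precisely to eliminate that case, and must then be verified over \emph{all} of $\RP^{n-1}$ --- not merely on $U$ --- in the two surviving cases. Pinning down where properness is used is the delicate point; the Zariski-density reduction and the remaining case checks are routine.
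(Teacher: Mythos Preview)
Your proof is correct and, while it shares the Zariski--density opening with the paper, the endgame is genuinely different. The paper argues as follows: after extending the ``orbit lies in a line'' condition from $U$ to all of $\RP^{n-1}$ via an algebraic--variety argument, it supposes some orbit is an entire $\RP^1$, deduces a non-real weight, and then manufactures a contradiction by perturbing a point $[a]\in U$ into a complex-eigenplane and invoking linear independence of the functions $1,\,e^{\alpha t},\,e^{\gamma t}\cos\delta t,\,e^{\gamma t}\sin\delta t$ (or $1,\,t,\,e^{\gamma t}\cos\delta t,\,e^{\gamma t}\sin\delta t$) to force four independent vectors on what should be a line. Your route instead squeezes more out of the Zariski step: from $x\wedge Mx\wedge M^2x\equiv 0$ you read off that every $M$-cyclic subspace is at most $2$-dimensional, hence $\deg m_M\le 2$, and the classification collapses to three minimal-polynomial cases. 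The complex-root case dies immediately because every orbit is then a full circle, violating properness on $U$; the remaining two cases are visibly the parabolic and hyperbolic normal forms. What your approach buys is a cleaner structural statement (the degree bound on $m_M$) and no need for the function-independence argument; what the paper's approach buys is that it never explicitly invokes the minimal polynomial, working instead with concrete orbit formulas. Both are short; yours is arguably the more conceptual.
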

\begin{proof} Let $\Phi=[\Psi].$ The set
$V$ consists of all triples $(a,b,c)$ with
$a,b,c\in\RR^{n}$ that are contained 
in some $2$-dimensional  linear subspace of $\RR^n$. Then $V$ is defined by the polynomial equations
given by setting the determinants of all  $3\times 3$ sub-matrices of $(a:b:c)$ equal to zero.
Let $W\subset\RR^n$ consist of all $a\in\RR^n$ such that the flow line containing
$[a]$ is contained is a projective line. Then $W$ equals the set of all $a$ such that
  $(a,\Psi_ta,\Psi_sa)\in V$ for all $s,t\in\RR$, and is therefore also a real algebraic variety. Since $W$
 contains the non-empty open set $U$ is follows that $W=\RR^n$ and therefore every
 orbit of $\Phi$ is a subset of a projective line. It only remains to show that no orbit is an entire
 projective line. If there is such an orbit then $\Phi$ has a weight that is not real.
 
 If $\Phi$ has a weight that is not real then
 there are
  $b_1,b_2\in\RR^n$ and $\gamma,\delta\in\RR$ with $\delta\ne0$ such that the orbit of $b_1$
  is $\Phi_t(b_1)=\exp(\gamma t)\left(\cos(\delta t) b_1+\sin(\delta t)b_2\right)$. 
  Choose $[a]\in U$. For all small $\epsilon$,
 then
  $c=a+\epsilon b\in U$. First assume the orbit of $[a]$ limits on two distinct points
  $[a_1],[a_2]\in\RP^{n-1}$. Then $a_1,a_2$ and $\Phi$ can be chosen
  so that $\Phi_t(a)= a_1+\exp(\alpha t)a_2$ with $\alpha\ne0$. Moreover
  $\langle a_1,a_2\rangle\cap\langle b_1,b_2\rangle = 0$. The four functions $1$, $\exp(\alpha t)$,
  $\exp(\gamma t)\cos(\delta t)$, $\exp(\gamma t)\sin(\delta t)$ are linearly independent.
  It follows that the orbit of $c$ contains four linearly independent vectors, which contradicts that
  the orbit of $[c]$ is contained in a line. 
 
  The remaining case is that $\Phi_t(a) = a + t d$. The four functions $1$, $t$,
  $\exp(\gamma t)\cos(\delta t)$, $\exp(\gamma t)\sin(\delta t)$ are linearly independent, 
  which is again a contradiction.
\end{proof}

 Suppose $\Phi\subset\PGL(n,\RR)$ is a $1$-parameter group.
A subgroup $\Gamma\subset\PGL(n,\RR)$ {\em approaches $\Phi$ at infinity}
if for every neighborhood $U\subset\PGL(n,\RR)$ of the identity, and every $s\in \RR$
there are $\gamma,\gamma'\in \Gamma$ and $t>s$ and $t'<-s$ such that 
$$\gamma\in U\cdot \Phi(t),\quad{\rm and}\quad
\gamma'\in  U\cdot \Phi(t')$$
In particular, if $\Gamma$ is a lattice in $T\cong\RR^m$, and $\Phi\subset T$ then $\Gamma$
approaches $\Phi$ at infinity. 

\begin{lemma}\label{approachlinear} Suppose $\Omega$ is properly convex, and $\Gamma\subset\PGL(\Omega)$
approaches a linear flow $\Phi$ at infinity. Then $\Omega$ is preserved by $\Phi$.
\end{lemma}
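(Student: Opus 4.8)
The plan is to show that the closure of $\Gamma$ in $\PGL(n,\RR)$ contains the whole one-parameter group $\Phi$; since $\Omega$ is properly convex, $\PGL(\Omega)$ is closed in $\PGL(n,\RR)$, so once we know $\Phi\subset\cl(\Gamma)\subset\PGL(\Omega)$ we are done. First I would fix a point $x$ in the interior of $\Omega$ and examine the translates $\gamma(x)$ for $\gamma\in\Gamma$ that are of the form $u\cdot\Phi(t)$ with $u$ near the identity and $t$ large. Because $\Phi$ is a linear flow, by the classification lemma it is either hyperbolic or parabolic; in the hyperbolic case $\Phi(t)(x)\to p$ as $t\to+\infty$ for a point $p\in\PP(B)$ in the stationary set, and in the parabolic case $\Phi(t)(x)\to p\in\PP(\ker\pi)$ as well. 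In either case $p\in\cl\Omega$ since $\Omega$ is preserved only after we establish the lemma — so instead I would argue that the accumulation points of the $\Gamma$-orbit of $x$ lie in $\cl\Omega$ (automatic, as $\Gamma$ preserves $\Omega$), and that these accumulation points, together with the hypothesis, force $\Phi$ to carry $\cl\Omega$ into itself.

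More precisely, here is the key step. Pick $v$ in the interior of $\Omega$. For each $s$, choose $\gamma_s\in\Gamma$ with $\gamma_s\in U_s\cdot\Phi(t_s)$, where $t_s>s$ and $U_s$ shrinks to the identity as $s\to\infty$. Then $\gamma_s(v)=u_s\Phi(t_s)(v)$ with $u_s\to\Id$, so $\gamma_s(v)$ and $\Phi(t_s)(v)$ have the same limit points in $\RP^{n-1}$. Since $\gamma_s(v)\in\Omega$, every limit point of $\{\Phi(t_s)(v)\}$ lies in $\cl\Omega$. Running the same argument at the point $w=\Phi(-r)(v)$ for an arbitrary fixed $r$, and using that $\Phi(t_s)\Phi(-r)=\Phi(t_s-r)$ with $t_s-r\to\infty$, I get that the limit points of $\{\Phi(t_s-r)(v)\}$ also lie in $\cl\Omega$. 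The upshot, after letting $r$ range over all reals and using properness and convexity of $\Omega$, is that the forward $\Phi$-orbit of any interior point of $\Omega$ stays in $\cl\Omega$; combined with the symmetric statement for $t'<-s$ in the hypothesis (which controls the backward orbit), the entire $\Phi$-orbit of every point of $\Omega$ lies in $\cl\Omega$. A convexity/openness argument — the orbit of an interior point cannot escape to the frontier without the orbit being eventually constant or the set failing to be properly convex — then shows $\Phi(\Omega)=\Omega$, i.e. $\Phi\subset\PGL(\Omega)$.

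The main obstacle I expect is controlling the ``drift'' introduced by the nearly-identity factors $u_s$: one must ensure that $u_s\to\Id$ uniformly enough that the limit sets of $\{\gamma_s(v)\}$ and $\{\Phi(t_s)(v)\}$ genuinely coincide, and that this persists when we precompose with a fixed $\Phi(-r)$ (here $u_s$ gets conjugated, but by a fixed element, so it still tends to $\Id$). A cleaner route that sidesteps limit-set bookkeeping is to prove directly that $\cl(\Gamma)$, as a closed subgroup of $\PGL(n,\RR)$, contains $\Phi(t)$ for $t$ in an unbounded subset of $\RR$ in both directions; since $\cl(\Gamma)$ is a group, it then contains the group generated by that subset, whose closure is all of $\Phi$ (a one-parameter group has no proper closed subgroup that is unbounded in both directions except itself). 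Then $\Phi\subset\cl(\Gamma)\subset\cl(\PGL(\Omega))=\PGL(\Omega)$, the last equality because proper convexity makes $\PGL(\Omega)$ closed. I would adopt this group-theoretic version as the main line and relegate the orbit estimates to a remark.
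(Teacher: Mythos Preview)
Your ``cleaner'' group-theoretic route has a genuine gap. The claim that a one-parameter group $\Phi\cong\RR$ has no proper closed subgroup unbounded in both directions is false: $\ZZ\subset\RR$ is such a subgroup. More seriously, the hypothesis does not even imply that $\cl(\Gamma)$ contains $\Phi(t)$ for any $t\ne 0$. The definition of ``approaches $\Phi$ at infinity'' allows the parameter $t$ to depend on the neighborhood $U$, so you get $\gamma$ close to $\Phi(t)$ only for \emph{some} large $t$ that varies as $U$ shrinks. Concretely, take $\Gamma=\ZZ^2$ a lattice in $T\cong\RR^2$ and $\Phi\subset T$ an irrational line; then $\Gamma$ approaches $\Phi$ at infinity (this is exactly the example following the definition in the paper), yet $\cl(\Gamma)=\ZZ^2$ meets $\Phi$ only at the identity. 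So the inclusion $\Phi\subset\cl(\Gamma)$ simply fails, and with it the whole second approach.

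Your first approach is essentially correct but overcomplicated. You never exploit the defining feature of a \emph{linear} flow: the orbit of any non-fixed point $p$ is an open arc in a projective line, with closure a segment $[a,b]$. All you need is $a,b\in\cl\Omega$. Your sequence $\gamma_s(p)=u_s\Phi(t_s)(p)$ with $u_s\to\Id$ and $t_s\to\infty$ already gives $\gamma_s(p)\to b$ (since $\Phi(t_s)(p)\to b$), hence $b\in\cl\Omega$; the symmetric part of the hypothesis gives $a\in\cl\Omega$. Convexity of $\cl\Omega$ then yields $[a,b]\subset\cl\Omega$, and since $p\in\interior\Omega$ lies on $(a,b)$, the standard fact that the segment from an interior point to a boundary point lies in the interior gives $(a,b)\subset\Omega$. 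This is the paper's argument, and it makes your detour through $\Phi(-r)(v)$ and the vague ``convexity/openness argument'' unnecessary.
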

\begin{proof} Suppose 
 $p\in\Omega$  is not fixed by $\Phi$. Let $\ell=[a,b]$ be the closure of the flowline containing $p$. There
is sequence $\gamma_n\in\Gamma$ with
 $\gamma_n=\epsilon_n\circ\Phi(t_n)$ with $\epsilon_n\to \text{Id}$ and $t_n\to\infty$.
 Then $\Phi(t_n)(p)\to b$ so $\gamma_n(p)\to b$. Thus $b\in\cl\Omega$.
 Similarly $a\in\cl\Omega$ so $(a,b)\subset\Omega$.
\end{proof}

\begin{lemma}\label{preserveflow} If a closed  properly convex domain $\Omega$ is preserved by a linear flow 
$\Phi$, then $\Omega$ is reducible and $\Phi$ is hyperbolic. \end{lemma}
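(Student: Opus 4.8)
The plan is to invoke the classification of linear flows established above, so that $\Phi$ is either parabolic or hyperbolic, dispatch the parabolic case by a direct contradiction with proper convexity, and in the hyperbolic case exhibit the eigenspaces of $\Phi$ as the reducing subspaces. First I would rule out the parabolic case. Suppose, after reparameterization, $\Phi_t[x] = [x + t\pi(x)]$ with $\pi \in \End(\RR^n)$, $\pi^2 = 0$, and $\pi \ne 0$ since the flow is nontrivial. The stationary set $\PP(\ker\pi)$ is a proper subspace, so, as $\Omega$ has nonempty interior, there is $[v]\in\Omega$ with $\pi(v)\ne 0$; because $\pi^2=0$ the vectors $v,\pi(v)$ are independent and span a plane $P$, and inside $\PP(P)\cong\RP^1$ the flowline through $[v]$ is $\{[v+t\pi(v)]:t\in\RR\} = \PP(P)\setminus\{[\pi(v)]\}$, whose closure is all of $\PP(P)$. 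Since $\Omega$ is closed and $\Phi$-invariant, this would force the projective line $\PP(P)\subseteq\Omega$, contradicting proper convexity (a properly convex domain contains no projective line). Hence $\Phi$ must be hyperbolic.

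In the hyperbolic case, after reparameterization there is a splitting $\RR^n = A\oplus B$ with $\Phi_t[a+b]=[a+e^tb]$, and both summands are nonzero since $\Phi$ is nontrivial; thus $\PP(A)$ and $\PP(B)$ are disjoint (indeed complementary) proper projective subspaces. I claim they witness the reducibility of $\Omega$. For $[v]\in\Omega$ with $v=a+b$, $a\in A\setminus\{0\}$, $b\in B\setminus\{0\}$, the flowline $\{[a+e^tb]:t\in\RR\}$ is an arc whose closure is the projective segment joining $[a]\in\PP(A)$ to $[b]\in\PP(B)$ and passing through $[v]$; as $\Omega$ is closed and invariant this segment lies in $\Omega$, so $[v]$ lies on a segment of the required type. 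The only other points of $\Omega$ lie on the stationary set $\PP(A)\cup\PP(B)$, a closed subset of empty interior; since $\Omega$ is the closure of its interior, these points are limits of points of the previous kind, and passing to the limit of the associated segments---whose endpoints remain in $\PP(A)\cap\Omega$ and $\PP(B)\cap\Omega$ and cannot collapse together, those subspaces being disjoint---produces a segment of the required type through any such point. Equivalently, the join of $\PP(A)\cap\cl\Omega$ and $\PP(B)\cap\cl\Omega$ is a closed $\Phi$-invariant subset of $\cl\Omega$ that contains a dense subset of $\Omega$, hence contains $\Omega$; so $\Omega$ is reducible.

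I do not expect a serious obstacle. The essential content is the parabolic exclusion, which follows at once from the fact that a properly convex domain contains no full projective line. The only step requiring a little care is treating the points of $\Omega$ that lie on the stationary set $\PP(A)\sqcup\PP(B)$ when verifying reducibility in the hyperbolic case, which is handled by the density/limiting argument above using that $\Omega=\cl(\interior\Omega)$ and that $\PP(A)$ and $\PP(B)$ are disjoint.
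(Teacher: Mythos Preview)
Your proof is correct and follows essentially the same approach as the paper: use the parabolic/hyperbolic dichotomy for linear flows, rule out the parabolic case because the closure of a flowline would be an entire $\RP^1$, and in the hyperbolic case observe that flowlines join $\PP(A)\cap\Omega$ to $\PP(B)\cap\Omega$. Your limiting argument for points of $\Omega$ lying on $\PP(A)\sqcup\PP(B)$ is more elaborate than necessary---since $\Omega$ is closed and convex and both $\PP(A)\cap\Omega$ and $\PP(B)\cap\Omega$ are nonempty, any such point lies on the segment to any point of the other piece---but this is a minor stylistic point, not a gap.
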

\begin{proof}
Given $p\in\interior\Omega$ the closure, $\ell$, of the flowline containing $p$
is contained in $\Omega$. If $\Phi$ is parabolic, then $\ell\cong\RP^1$. Thus $\Phi$ is hyperbolic
and $\ell=[a,b]$ has endpoints $a\in A\cap\Omega$ and $b\in B\cap\Omega$, where $A$ and 
$B$ are the stationary subsets
of $\Phi$. It follows that $\Omega$ is the convex hull of $A\cap\Omega$ and $B\cap\Omega$,
and it is therefore reducible. 
\end{proof}
The following is central to our approach. 

\begin{corollary}\label{approach} Suppose $M=\Omega/\Gamma$ is properly convex 
and $\Gamma$  approaches a linear flow at infinity. Then $M$ is  reducible. 
\end{corollary}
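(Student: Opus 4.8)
The plan is to chain together the three lemmas immediately preceding the corollary. The hypothesis is that $\Gamma$ approaches a linear flow $\Phi$ at infinity, where $\Omega$ is properly convex and $\Gamma\subset\PGL(\Omega)$. First I would pass to the closure: set $\overline\Omega=\cl\Omega$, which is a closed properly convex domain preserved by $\Gamma$ (since $\Gamma$ acts on $\RP^n$ continuously and preserves $\Omega$, it preserves $\cl\Omega$). Applying Lemma~\ref{approachlinear} with $\Omega$ replaced by its interior — or directly, noting the proof of Lemma~\ref{approachlinear} already produces points of $\cl\Omega$ — we conclude that $\Phi$ preserves $\overline\Omega$. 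Then Lemma~\ref{preserveflow}, applied to the closed properly convex domain $\overline\Omega$ preserved by the linear flow $\Phi$, yields that $\overline\Omega$ is reducible and $\Phi$ is hyperbolic.

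The remaining point is purely definitional: reducibility of $\overline\Omega$ is the same as reducibility of $\Omega$. Indeed the definition of reducible given in the excerpt is phrased in terms of a properly convex set $\Omega$ and its closure: there exist disjoint proper subspaces $\RP^a,\RP^b\subsetneq\RP^n$ such that every point of $\Omega$ lies on a segment in $\Omega$ with one endpoint in $\RP^a\cap\cl\Omega$ and the other in $\RP^b\cap\cl\Omega$. Since $\interior\overline\Omega=\interior\Omega$ and $\cl\overline\Omega=\cl\Omega$, the decomposition produced by Lemma~\ref{preserveflow} (which exhibits $\overline\Omega$ as the convex hull of $A\cap\overline\Omega$ and $B\cap\overline\Omega$ for the stationary subspaces $A,B$ of $\Phi$) gives precisely such a decomposition for $\Omega$. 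Hence $\Omega$ is reducible, and therefore $M=\Omega/\Gamma$ is reducible by the definition of a reducible properly convex manifold.

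There is essentially no obstacle here: the corollary is a formal consequence of Lemmas~\ref{approachlinear} and \ref{preserveflow}, and the only care needed is the bookkeeping between $\Omega$, its interior, and its closure, together with checking that a linear flow preserving $\Omega$ (as an open set) is the same as one preserving $\cl\Omega$ — which is automatic by continuity of the flow. One could also streamline the argument by observing that Lemma~\ref{approachlinear} as stated already delivers that $\Phi$ preserves $\Omega$, so no closure manipulation is needed on that side; the closure only enters when invoking Lemma~\ref{preserveflow}, whose statement is about closed domains, and this is handled by replacing $\Omega$ with $\cl\Omega$ throughout and noting $\Gamma$ still approaches $\Phi$ at infinity with respect to $\cl\Omega$.
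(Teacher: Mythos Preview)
Your proposal is correct and follows exactly the paper's approach: chain Lemma~\ref{approachlinear} and Lemma~\ref{preserveflow}. The paper's proof is two lines and glosses over the open/closed bookkeeping between the hypotheses of the two lemmas; your extra paragraph handling the passage from $\Omega$ to $\cl\Omega$ is harmless and, if anything, more careful than the original.
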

\begin{proof} By (\ref{approachlinear}) $\Omega$ is preserved by a linear flow, and by
(\ref{preserveflow})
$\Omega$ is reducible. 
\end{proof}

\begin{lemma}\label{stabflat} Suppose $\Omega$ is open, properly convex, and there is
an abelian group $T\subset\PGL(\Omega)$ that acts simply transitively on $\Omega$.
Then $\Omega$ is the interior of a simplex.
\end{lemma}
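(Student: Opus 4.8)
The plan is to pass to the cone over $\Omega$, identify the simply transitive abelian action there with multiplication in a finite-dimensional commutative associative $\RR$-algebra, and then use proper convexity to force that algebra to be a product of copies of $\RR$. Set $W=\RR^{n+1}$, let $C\subset W$ be the (salient) open cone over $\Omega$, and let $G=\{g\in\GL(W):g(C)=C\}$. Since $\Omega$ is properly convex, $C\cap(-C)=\varnothing$, so every scalar in $G$ is positive, and $G\to\PGL(W)$ has image $\PGL(\Omega)$ with kernel $\RR_{>0} I$. Let $\tilde T$ be the preimage of $T$ in $G$. First I would verify two things. One: $\tilde T$ is abelian — for $g,h\in\tilde T$ the commutator is a positive scalar (it maps to $1$ in the abelian group $T$) of determinant $1$, hence equal to $I$. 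Two: $\tilde T$ acts simply transitively on $C$ — transitivity comes from that of $T$ together with the positive scalars to correct the ambiguity of a lift, and freeness because an element fixing a point of $C$ is a positive scalar fixing a nonzero vector. Hence the orbit map $\tilde T\to C$, $g\mapsto gv_0$, is a diffeomorphism for any $v_0\in C$; in particular $\tilde T$ is a connected, simply connected abelian Lie group, so $\tilde T=\exp\mathfrak t$ with $\mathfrak t=\operatorname{Lie}(\tilde T)\subset\mathfrak{gl}(W)$ abelian, $\dim\mathfrak t=n+1$, and $I\in\mathfrak t$.

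Fix $v_0\in C$ and let $\psi:\mathfrak t\to W$, $\psi(X)=Xv_0$; being the differential at the identity of the orbit map, $\psi$ is a linear isomorphism. The crucial algebraic point is that $\mathfrak t$ is closed under matrix multiplication: given $X,Y\in\mathfrak t$, pick $Z\in\mathfrak t$ with $Zv_0=XYv_0$; then for every $w=Uv_0$ with $U\in\mathfrak t$ one computes $Zw=ZUv_0=UZv_0=UXYv_0=XYUv_0=XYw$, using commutativity of $\mathfrak t$ at each step, so $XY=Z\in\mathfrak t$. Thus $\mathfrak t$ is a commutative, associative, unital subalgebra of $\End(W)$, and transporting its product through $\psi$ turns $W$ into a finite-dimensional commutative associative unital $\RR$-algebra $A$ (with $A=W$ as a vector space and unit $v_0=\psi(I)$), in which each $X\in\mathfrak t$ acts on $W$ as left multiplication by $\psi(X)\in A$. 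Consequently $\tilde T$ acts on $W$ as multiplication by elements of the image of $\exp_A:A\to A$, so the orbit $C=\tilde T\cdot v_0$ is exactly $\exp_A(A)\subseteq A$.

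It remains to identify $A$. By the structure theory of Artinian rings, $A\cong\prod_i R_i$ with each $R_i$ a local commutative $\RR$-algebra, and correspondingly $C=\exp_A(A)=\prod_i\exp_{R_i}(R_i)$. If some $R_i$ were isomorphic to $\CC$, or had a nonzero nilpotent element (hence a nonzero element $z$ with $z^2=0$), then $\exp_{R_i}(R_i)$ would contain an affine line — respectively $\{1+ti:t\in\RR\}$ in the $\CC$-factor, or $\{1+tz:t\in\RR\}$ — and padding with the units of the remaining factors exhibits an affine line inside $C$. But a properly convex cone contains no affine line, since the direction of such a line would lie in $\overline C\cap(-\overline C)=\{0\}$. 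Hence every $R_i\cong\RR$, so $A\cong\RR^{n+1}$ as an $\RR$-algebra, $C$ is linearly equivalent to the open positive orthant $\RR_{>0}^{n+1}$, which is the cone over an $n$-simplex, and therefore $\Omega=\P(C)$ is the interior of a simplex. I expect the fussiest part to be the first reduction — checking that $\tilde T$ is abelian, acts simply transitively, and has $I$ in its Lie algebra — while the associativity trick and the "affine line in the cone" obstruction are both short once the setup is in place.
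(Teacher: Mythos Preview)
Your argument is correct, and it takes a genuinely different route from the paper's proof.

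The paper argues directly in $\RP^n$. Since $\dim T>\dim\Fr\Omega$, every boundary point $p\in\Fr\Omega$ has a nontrivial stabilizer $A_p\in T$; the component $C_p$ of $\Fr\Omega\cap\operatorname{Fix}(A_p)$ through $p$ is compact and convex, and because $T$ is abelian and connected it is $T$--invariant. Any extreme point $q$ of $\cl\Omega$ is the intersection of the $C_p$ containing it, so $q$ is fixed by all of $T$. Choosing an $n$--simplex $\Delta$ whose vertices are extreme points of $\cl\Omega$, $T$ fixes the vertices and hence preserves $\interior\Delta$; transitivity then forces $\Omega=\interior\Delta$.

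Your approach instead recognizes the cone together with its abelian automorphism group as (the exponential image in) a finite-dimensional commutative unital $\RR$--algebra, and uses proper convexity (no affine line in the cone) to eliminate $\CC$--factors and nilpotents, leaving $A\cong\RR^{n+1}$ and $C$ the positive orthant. This is essentially the Vinberg--Koecher point of view on homogeneous convex cones, specialized to the abelian case; it is more structural and explains the simplex via the primitive idempotents of $A$, at the cost of importing a little ring theory. The paper's proof is shorter and completely elementary. Both proofs tacitly use that $T$ is a connected Lie group; this is automatic, since the closure $\overline T$ in $\PGL(\Omega)$ is abelian and still acts simply transitively (freeness follows because an element of $\overline T$ fixing one point commutes with $T$ and hence fixes all of $\Omega$), so $T=\overline T$ is closed, and the orbit map identifies it with the connected set $\Omega$.
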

\begin{proof}
Since $\dim T>\dim\Fr\Omega$, for every $p\in\Fr\Omega$ 
there is $1\ne A_p \in T$ with $A_p(p)=p$.
Let $C_p$ be the component of $\Fr\Omega\cap \operatorname{Fix}(A_p)$ that contains $p$.
Then $C_p$ is a non-empty compact convex subset of $\Fr\Omega$.
Since $T$ is abelian and connected it follows that $C_p$ is preserved by  $T$.
Hence $\Fr\Omega$ is the union of $T$--invariant convex sets. 
Let $q\in\cl\Omega$ be an extreme point.
Then $q$ equals the intersection of those $C_p$ that contain it. Hence $q$ is fixed by all of $T$.
Every point in $\Omega$ is in the convex hull of an $n$-simplex, $\Delta$, with vertices that are extreme
 points of $\cl\Omega$. The vertices of $\Delta$ are fixed by $T$, therefore $T$ preserves the interior
 of $\Delta$. Since $T$ acts transitively on $\Omega$, and $\interior\Delta$ contains a point in $\Omega$,
 it follows that $\Omega=\interior(\Delta)$.
\end{proof}

\begin{definition} A {\em  quasi-cusp} is a properly convex $n$-manifold
$Q=\Omega/\Gamma$ such that  $\Gamma$ contains a
 finite index subgroup isomorphic to $\ZZ^{n-1}$.\end{definition}
Every cusp is a quasi-cusp. Observe that there is no requirement
on $\bdy\Omega$. 
If $Q$ is a quasi-cusp with boundary,
then $\interior Q$ is also a quasi-cusp.
The proof of Theorem~(\ref{virtab})
amounts to showing quasi-cusps are generalized cusps under some extra hypotheses. The definitions imply:

\begin{lemma} Suppose $Q$ is a quasi-cusp of dimension $n$. Then 
 $H^{n-1}(Q;\ZZ_2)\cong\ZZ_2.$
\end{lemma}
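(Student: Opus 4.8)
The plan is to use that a properly convex $\Omega$ is contractible, so that $Q$ is aspherical with fundamental group $\Gamma$, and that $\Gamma$ is a torsion-free group virtually isomorphic to $\ZZ^{n-1}$; by the Bieberbach theorems such a $\Gamma$ is the fundamental group of a closed flat $(n-1)$-manifold, and the assertion then reduces to $\ZZ_2$-Poincar\'e duality for that manifold.

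First I would record the easy homotopy-theoretic facts. A properly convex $\Omega$ is a bounded convex subset of an affine chart $\RR^n\subset\RP^n$, hence contractible. Since $Q=\Omega/\Gamma$ is a manifold, $\Gamma$ acts freely and properly discontinuously on $\Omega$, so $\Gamma$ is torsion-free and $Q$ is a $K(\Gamma,1)$. If $Q$ has nonempty boundary it is homotopy equivalent to $\interior Q$, which is again a $K(\Gamma,1)$; since this affects nothing in what follows I may assume $Q$ is open. Thus $H^{n-1}(Q;\ZZ_2)\cong H^{n-1}(\Gamma;\ZZ_2)$.

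Next, by hypothesis $\Gamma$ has a finite-index subgroup isomorphic to $\ZZ^{n-1}$; in particular $\Gamma$ is finitely generated, and being torsion-free it is, by the Bieberbach theorems, isomorphic to $\pi_1N$ for some closed flat $(n-1)$-manifold $N$. As $N$ is also a $K(\Gamma,1)$, it is homotopy equivalent to $Q$, so $H^{n-1}(Q;\ZZ_2)\cong H^{n-1}(N;\ZZ_2)$. Finally $N$ is a closed connected $(n-1)$-manifold, hence $\ZZ_2$-orientable, and $\ZZ_2$-Poincar\'e duality gives $H^{n-1}(N;\ZZ_2)\cong H_0(N;\ZZ_2)\cong\ZZ_2$. (When $n=1$ the group $\Gamma$ is trivial and $H^0(Q;\ZZ_2)\cong\ZZ_2$ directly.) Combining these, $H^{n-1}(Q;\ZZ_2)\cong\ZZ_2$.

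The only substantive input is the appeal to the classification of finitely generated torsion-free virtually-$\ZZ^{n-1}$ groups as fundamental groups of compact flat $(n-1)$-manifolds; this is standard, but it is the point I would expect a referee to want cited precisely. If one prefers to avoid Bieberbach, the same conclusion is available purely algebraically: $\ZZ^{n-1}$ is an orientable $\operatorname{PD}_{n-1}$-group, a torsion-free group containing a $\operatorname{PD}_{n-1}$-group with finite index is again a $\operatorname{PD}_{n-1}$-group (Johnson--Wall, or Bieri), and its dualizing module becomes the trivial module after tensoring with $\ZZ_2$ since $\operatorname{Aut}(\ZZ_2)$ is trivial; hence $H^{n-1}(\Gamma;\ZZ_2)\cong H_0(\Gamma;\ZZ_2)\cong\ZZ_2$, and the asphericity of $Q$ finishes the proof.
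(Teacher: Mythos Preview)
Your proof is correct. The paper gives no proof of this lemma at all: it simply writes ``The definitions imply:'' and states the result. So there is nothing to compare against beyond noting that what the authors regard as immediate you have spelled out carefully.

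Both of your approaches are sound. The Bieberbach route works, though the precise statement you invoke (a torsion-free abstract group that is virtually $\ZZ^{n-1}$ is the fundamental group of a closed flat $(n-1)$-manifold) is a consequence of the Bieberbach--Zassenhaus theory rather than one of the three classical Bieberbach theorems themselves; a referee might ask you to cite, e.g., Charlap's book or the Auslander--Kuranishi paper for the passage from abstract crystallographic groups to geometric ones. Your second approach via Poincar\'e duality groups (Johnson--Wall, Bieri) is cleaner and entirely self-contained at the level of group cohomology: $\ZZ^{n-1}$ is an orientable $\mathrm{PD}_{n-1}$-group, a torsion-free finite extension of a $\mathrm{PD}$-group is again $\mathrm{PD}$, and over $\ZZ_2$ the orientation character is trivial, so $H^{n-1}(\Gamma;\ZZ_2)\cong H_0(\Gamma;\ZZ_2)\cong\ZZ_2$. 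That argument is probably closer in spirit to what the authors have in mind when they say the result is immediate.
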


Suppose $C_i=\Omega_i/\Gamma_i$ are  generalised cusps for $i=1,2$  and $\dim C_i=n_i$.
Then there is a quasi-cusp $\Omega/\Gamma$, where $\Omega=\Omega_1*\Omega_2\subset\RP^{n_1+n_2+1}$
and $\Gamma=\Gamma_1\oplus\Gamma_2\oplus K$ and $K\cong\ZZ^2$ is  
a discrete subgroup of $\Phi_1\oplus\Phi_2\oplus\Phi\cong\RR^3,$ where $\Phi_i$ is a radial flow for $C_i$
and $\Phi$ is the linear flow that fixes each point in $\Omega_1$ and $\Omega_2$.

If $Q\cong\bdy Q\times [0,1)=\Omega/\Gamma$ is a quasi-cusp with compact boundary, there is a decomposition of  $\Fr\Omega$ into three parts
$$\Fr\Omega=\bdy\Omega\sqcup\operatorname{Fr}_v\Omega\sqcup\bdy_{\infty}\Omega$$
that is described below.  Moreover $\bdy_{\infty}\Omega=\RP^{n-1}_{\infty}\cap\cl\Omega$ and
 $\operatorname{Fr}_v\Omega$ is empty  for generalized cusps. 
\gap

\noindent{\em Example.} 
 In the following example the quasi-cusp is
   $Q=\Omega/\Gamma$,  with $\Omega\subset\RR^3$, and $\Gamma\subset\Aff(3)$.
   As an affine manifold $Q=S^1\times R$ where $S^1$ is the quotient of $(0,\infty)$ by a homothety, and $R$ is the quotient
   of the parabolic model $\{(y,z):2y\ge z^2\}$ of $\HH^2$ by a cyclic group of parabolics.

This can be referred to during the proof of Theorem~(\ref{prop}). 
{Define $\Omega=\interior(\Omega_M)$ where $\Omega_M\subset\RR^3$} and  $\tau:\RR^2\to \Aff(\Omega)$  are given by
$$\Omega_{M}=\{(x,y,z): 2y\ge z^2,\ x>0\},\qquad\qquad 
\tau(a,b)=\bpmat 
e^a & 0& 0 & 0\\
0 & 1 & b & b^2/2\\
0 &0 & 1 & b\\
0 & 0 & 0 & 1
\epmat$$
Then  $T=\tau(\ZZ^2)$ is a lattice in $T=\Image \tau$. Observe that  $\Omega$ is
properly convex and preserved by $T$,
so $Q=\Omega/\Gamma$ is a quasi-cusp.
Moreover, $T$ acts simply transitively on $\bdy\Omega_{M}$.
The radial flow $\Psi_t(x,y,z)=(x,y-t,z)$ is centralized by $T$
and $\Psi\oplus T$ acts simply transitively on $\Omega^+=\cup_t\Psi_t(\Omega)=\{(x,y,z): x>0\}$.
The plane $H$ where $y=-1$ is a displacing hyperplane for $\Psi$ that is disjoint from $\Omega$.

\begin{figure} \begin{center}
	 \includegraphics[scale=0.5]{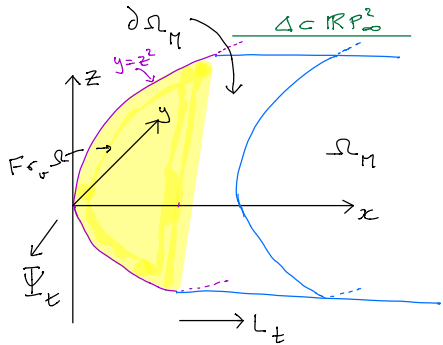}
	 \end{center}
 \caption{ $\Omega_M=\{(x,y,z): 2y\ge z^2,\ x>0\}$}\label{OmegaM}
\end{figure}

Also  $\Omega$ is
{\em backwards invariant,}  i.e. $\Psi_t(\Omega)\subset\Omega$ for all $t\le 0$. The surface
$$\bdy\Omega_{M}= \{(x,y,z):  2y=z^2,\ x>0\}$$ 
is   convex  and
  every point in $\bdy\Omega_M$ is contained in a straight line segment.
Moreover, $\bdy\Omega_{M}$ is transverse to the flow lines of $\Psi$ and is called 
the {\em flow boundary}. 
The {\em vertical frontier} 
$$\operatorname{Fr}_v\Omega:=(\Fr\Omega\setminus\bdy\Omega) \cap\RR^3=\{(x,y,z): 2y\ge z^2,\ x=0\}$$
is backwards invariant under $\Psi$.
The {\em ideal boundary}  is a  $1$-simplex  $$\bdy_{\infty}\Omega=\Fr\Omega\cap\RP^2_{\infty}=\{[x:1-x:0:0]: x\in[0,1]\}$$
The quasi-cusp $Q$ is foliated by convex ruled tori that are
 covered by level sets of $2y-z^2$.
These levels sets are permuted by $\Psi$.
Also $Q$ is a convex submanifold of $Q^+=\Omega^+/\Gamma\cong T^2\times\RR$ where
$\Omega+=\{(x,y,z): x>0\}\supset Q$. There is another
radial flow $L_t(x,y,z)=(\exp(t) x,y,z)$ that commutes with $T$. This flow preserves $\Omega$
and shows it is reducible: $\Omega_{M}$ is a cone  from $[1:0:0:0]$ to {$\operatorname{Fr}_v\Omega$}, 
which is a closed disk with one point deleted from the boundary.
Moreover $L_t$ is a subgroup of $T\oplus\Psi$.\qed
 
\gap
The subgroup $\UT(n)<\GL(n,\RR)$ consists of upper-triangular matrices with
positive diagonal entries. 
Let $\operatorname{D}(n)<\UT(n)$ be the subgroup of diagonal matrices.
Suppose $\Omega$ is properly convex and $K \subset \Omega$. The \emph{convex hull} $\CH(K)$ is the intersection
of all closed convex subsets of $\Omega$ that contains $K$.

\begin{theorem}\label{prop} Suppose $Q=\Omega/\Gamma$ is a quasi-cusp of dimension $n$.
Then either $Q$ contains a  generalized cusp or $\Omega$ is reducible.\end{theorem}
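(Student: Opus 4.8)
\emph{Proof proposal for~(\ref{prop}).}
The plan follows the argument outlined in the sketch proof of Theorem~(\ref{virtab}): that sketch in fact establishes the present dichotomy, with ``$\Omega$ reducible'' appearing as the alternative to ``$S$ strictly convex'' rather than being excluded by hypothesis. The three phases are (1) reduce to a free abelian normal subgroup and recall the structure of the holonomy together with the radial flow and the convex hypersurface $S$; (2) if $S$ is strictly convex, exhibit a generalized cusp inside $Q$; (3) if $S$ contains a segment, extract a one-parameter subgroup whose orbits are segments on an open set and deduce reducibility via the earlier lemmas.

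\emph{Step 1: reduction and structure of the holonomy.} Since $\Gamma$ is torsion free and virtually $\ZZ^{n-1}$, its maximal abelian normal subgroup $\Gamma_0$ is free abelian of rank $n-1$ and has finite index. Let $T$ be the identity component of the Zariski closure of $\Gamma_0$ in $\PGL(n+1,\RR)$; then $T$ is abelian, $\dim T=n-1$, $\Gamma_0$ is a cocompact lattice in $T$, and $\Gamma$ normalizes $T$ because it normalizes $\Gamma_0$. After conjugation $T\subset\UT(n+1)$. The substantial structural input — parallel to the analysis of generalized cusps in \cite{BCL1, BCL2, CLT2} — is that there is a radial flow $\Phi$ centralizing $T$ and fixing a hyperplane $H$ disjoint from $\Omega$; that the $(T\oplus\Phi)$-orbit of a generic point of $\RP^n$ is open; and that the $T$-orbit $S$ of a suitable point deep in $\Omega$ is a properly embedded convex hypersurface homeomorphic to $\RR^{n-1}$ on which $T$ acts simply transitively, bounding a $T$-invariant properly convex region $C\subset\cl\Omega$ with $C/\Gamma_0\cong[0,\infty)\times(S/\Gamma_0)$, the product structure coming from $\Phi$, so that $C/\Gamma_0$ is a generalized cusp precisely when $S$ contains no line segment. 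Because $\Gamma$ normalizes $T$ it permutes the $T$-orbits inside $\bigcup_t\Phi_t(\Omega)$, which are leaves of a foliation with $1$-dimensional leaf space; $\Gamma_0$ fixes every leaf, so the finite group $\Gamma/\Gamma_0$ acts on a $1$-manifold and fixes a leaf. Taking $S$ to be such a leaf, $S$ and $C$ are $\Gamma$-invariant, so $C/\Gamma$ is a submanifold of $Q$ with $C/\Gamma\cong[0,\infty)\times(S/\Gamma)$.

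\emph{Step 2: the strictly convex case.} If $S$ contains no line segment, then $C/\Gamma$ is a properly convex manifold diffeomorphic to $[0,\infty)\times(S/\Gamma)$ whose compact boundary $S/\Gamma$ contains no line segment, and $\pi_1(C/\Gamma)=\Gamma$ is virtually nilpotent; thus $C/\Gamma$ is a generalized cusp contained in $Q$.

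\emph{Step 3: the flat case forces $\Omega$ reducible.} Otherwise $S$ contains a line segment. Viewing $S$ as the boundary of $C$, pick $p$ on a segment in $S$ and let $F$ be the relative interior of the face of $C$ through $p$; then $F\subset S$ is flat, properly convex, and relatively open in its projective span $\RP^k$ with $1\le k\le n-1$. Let $\stab(F)=\{A\in T:A(F)=F\}$. Since $T$ acts simply transitively on $S$ and carries faces of $C$ to faces of $C$, the unique $A\in T$ taking $p$ to a given $q\in F$ must take the face through $p$ to the face through $q$, hence fix $F$; so $\stab(F)$ is a closed connected subgroup of $T$ acting simply transitively on $F$. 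By Lemma~(\ref{stabflat}), $F$ is the interior of a $k$-simplex $\Delta$ with $\stab(F)$-fixed vertices; comparing dimensions, $\stab(F)$ is exactly the group of collineations diagonal with respect to the vertices of $\Delta$, and hence contains a one-parameter subgroup $L$ whose orbits in $F$ are open line segments (chords toward a vertex), in particular proper subsets of projective lines. Now $L\subset\stab(F)\subset T$, so $L$ commutes with $T\oplus\Phi$. Every point of $S$ has the same $(T\oplus\Phi)$-orbit, the open set $U=\bigcup_t\Phi_t(S)$; for $q=A\,\Phi_s(p)\in U$ the $L$-orbit of $q$ is $A\,\Phi_s$ applied to the $L$-orbit of $p$, hence a proper subset of a projective line. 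By Lemma~(\ref{opensetorbit}) (applied in $\RP^n$), $L$ is a linear flow. Since $\Gamma$ is a lattice in $T$ and $L\subset T$, $\Gamma$ approaches $L$ at infinity, so Corollary~(\ref{approach}) shows $\Omega$ is reducible.

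\emph{Main obstacle.} The bulk of the work, and the main obstacle, is the structure theory invoked in Step~1: that the holonomy is a cocompact lattice in a connected abelian $T\subset\UT(n+1)$ admitting a centralizing radial flow $\Phi$ with $T\oplus\Phi$ acting with open orbits, and that the associated $T$-orbit $S$ is a convex hypersurface for which ``generalized cusp'' is equivalent to ``strictly convex''. By contrast, Step~3 — the new content — is short given Lemmas~(\ref{stabflat}) and~(\ref{opensetorbit}) and Corollary~(\ref{approach}); its delicate points are the identification of $\stab(F)$ with the diagonal group of $\Delta$ (which supplies the segment-orbit subgroup $L$) and the openness of the $(T\oplus\Phi)$-orbit of a point of $F$ (which lets the segment condition spread to an open set). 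The finite-cover bookkeeping in Step~1 is routine but is needed to place the generalized cusp inside $Q$ itself rather than in a cover.
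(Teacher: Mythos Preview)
Your outline matches the paper's strategy, and Step~3 is essentially the paper's argument verbatim. The genuine gap is in Step~1, exactly at the point you yourself flag as the ``main obstacle'': you invoke as ``structural input'' from \cite{BCL1,BCL2,CLT2} that the $T$-orbit $S$ of a point deep in $\Omega$ is a \emph{convex} hypersurface, but those references treat generalized cusps (where strict convexity of $\partial\Omega$ is hypothesized), not quasi-cusps, and the paper's full proof explicitly says of $S'=T\cdot x$ that ``we do not yet know that $S'$ is convex''. The paper does not establish convexity of the orbit; instead it manufactures a convex $S$ by a different route: after disposing of the case $T\cap\Phi\ne1$ (then $\Phi\subset T$, so $\Gamma$ approaches $\Phi$ and $\Omega$ is reducible by~(\ref{approach})), it splits into parabolic versus hyperbolic radial flow; in the parabolic case it first proves that $T$ preserves the flowed-out set $\Omega^+=\bigcup_t\Phi_t(\Omega)$ (a nontrivial step, using that $V(g)/\Gamma$ is a $K(\Gamma,1)$ inside $\Omega^+/\Gamma$), and then runs the fundamental-domains trick of \cite{CLT2}(6.24): with compact $X\subset T$, $D\subset\partial M$ satisfying $\Gamma\cdot X=T$, $\Gamma\cdot D=\partial M$, the image $K=\pi(X\cdot D)$ is compact, hence contained in some $\Omega_t/\Gamma$, and one takes $S=\partial\,\cl(\CH(\pi^{-1}K))\subset\Omega_{t+1}$. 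Convexity of $S$ is then automatic, and $T$-invariance follows because $T$ preserves $\pi^{-1}K$. Your Steps~2--3 then go through with this $S$.

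A smaller issue: your descent from $\Gamma_0$ to $\Gamma$ via a fixed leaf requires $\Gamma$ to preserve the foliation of $\Omega^+$ by $T$-orbits with one-dimensional leaf space, which needs $\Gamma$ to normalize $T\oplus\Phi$; \cite{CLT2}(6.19) only gives that $\Phi$ centralizes $T$. The paper sidesteps this by reducing at the outset to $\Gamma=\Gamma_0$: if a finite cover of $Q$ is a generalized cusp then so is $Q$, while in the reducible alternative $\Gamma_0$ (hence $\Gamma\supset\Gamma_0$) approaches a linear flow at infinity, and~(\ref{approach}) applies to the original $\Omega$.
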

\begin{proof} 
We may assume $\Omega$ is open in $\RP^n$. 
By \cite{CLT2}(6.18)  there is a finite index subgroup $\Gamma_0\subset\Gamma$
such that  (after conjugacy) $\Gamma_0$ is a lattice in an 
upper-triangular connected nilpotent Lie subgroup $T\subset\UT(n+1)$. It follows from the definition
that if $Q$ has a finite cover  that is a generalized cusp then
$Q$ is a generalized cusp. Thus we may assume $\Gamma_0=\Gamma$.
By \cite{CLT2}(6.19) there is a radial flow $\Phi$ that centralizes
$T$.

Here is a sketch of the proof when the radial flow is parabolic. 
We want to show that the $T$-orbit of a point  $x$ deep inside $\Omega$ is a convex hypersurface inside $\Omega$.
Since $\Gamma$ is a lattice in $T$ and $\Gamma\cdot x\subset\Omega$ this seems reasonable.
Let $\Omega_t=\Phi_t(\Omega)$. To prove it, we enlarge $\Omega$ to be
the set $\Omega^+=\cup_t\Omega_t$ that is the union of flow lines thorough $\Omega$.
This
 is preserved by $\Gamma$ because $\Phi$ centralizes $\Gamma$. 
 Then $Q^+=\Omega^+/\Gamma$ is $Q$ plus an open collar. 
 A bit of work shows that $\Omega^+$ is preserved by all of $T$. If $x\in\Omega$ then $S'=T\cdot x$ 
 is a hypersurface in $\Omega^+$.  We do
 not yet know that $S'$ is convex. We
 use the fundamental-domains trick from \cite{CLT2}(6.24) to produce from $S'$ a convex codimension-1 submanifold 
 $S\subset\Omega_t$ (for some $t$) that is preserved by $T$. The action of $T$ on $S$ is simply transitive.
If $S$ is strictly convex we are done.  Otherwise there is a maximal
 flat $F\subset S$ and the stabilizer $H\subset T$ of $F$ acts simply transitively on $F$. 
There is a  subgroup $L$ of $H$ such that the $H$-orbit of every point in $F$ is a line segment. Since $L$ commutes
with $T\oplus\Phi$ it follows that the  orbit under $L$ of every point in $\Omega^+$ is a line segment, thus $L$
is a linear flow, and $\Omega$ is reducible. Now for the details.
\halfgap

 If $T\cap\Phi\ne 1$, then $\Phi\subset T$ so $\Gamma$ approaches $\Phi$ at infinity,
and this implies $\Omega$ is reducible.
Thus we may assume $T\cap \Phi=1$. Let $H$ be the stationary hyperplane and $c$ the center for $\Phi$. We may assume $H$ is disjoint
from $\Omega$, otherwise  replace $\Omega$ by one of the components, $\Omega'$, of $\Omega\setminus H$.
If $\Omega'/\Gamma$  contains a generalized cusp, we are done. If $\Omega'/\Gamma$ does not contain a generalized cusp, then below
we show that $\Gamma$
approaches a linear flow at infinity, and then it follows from~(\ref{approach}) that $\Omega$ is reducible.

The first case is that {$\Phi$ is parabolic, so $c\in H$. By \cite{CLT2}(6.19) we may choose $\Phi$ to be parabolic whenever $T$ is not diagonal.}
The affine patch $\RR^n=\RP^n\setminus H$ contains $\Omega$. Since $\Omega$ is properly convex there
is a displacing hyperplane $P\subset\RP^n$ that is disjoint from $\cl(\Omega)$.

After reversing the direction of the radial flow if needed, we may assume
$P_t:=\Phi_t(P)$ moves away from $\Omega$ as $t$ increases. 
After an affine change of
 coordinates we may assume  that $P$ is the hyperplane $x_1=0$ and {$x_1>0$ on $\Omega$}
and $\Phi_t(x)=x-t\cdot e_1$. 
Since $\Phi$ centralizes $T$, it centralizes $\Gamma$, so $\Gamma$ preserves
$\Omega_t=\Phi_t(\Omega)$. 

It follows from Claims 2, 3 and 4 of \cite{CLT2}(6.23) that ${\Omega_t}\subset\Omega$, whenever $t<0$, i.e. $\Omega$ is backwards invariant.
Moreover it follows that there is a properly convex set $\Omega_M\cong\bdy\Omega_M\times[0,\infty)$ 
with $\Omega\subset\Omega_M\subset\cl\Omega$, and that
 $Q=\interior(M)$, where $M=\Omega_M/\Gamma\cong \bdy M\times[0,\infty)$.
 Then $\Omega^+=\cup_t\Omega_t$ is the union of flowlines that contain a point of $\bdy\Omega_M$. Moreover $\Omega^+\subset\RR^n$
is convex and $F:\bdy\Omega_M\times\RR\to\Omega^+$ given by $F(x,t)=\Phi_t(x)$ is a homeomorphism. Also $\Gamma$
acts freely and properly discontinuously on $\Omega^+$ and  $\Omega^+/\Gamma\cong\bdy M\times\RR$.

We claim that $T$ preserves $\Omega^+$. Given $g\in T$, then $V(g):=\Omega^+\cap g(\Omega)^+$ is convex, open and 
$\Gamma$--invariant,
but it might be empty. Let $W\subset T$ be the set of all $g\in T$ such that $V(g)$ is not empty. Then $W$ is open. Moreover if $V(g)\ne\emptyset$
then $V(g)=\Omega^+$ because $V(g)/\Gamma$ is a convex submanifold of $\Omega^+/\Gamma$ and
it is a union of flowlines. Thus $V(g)=N\times\RR$ for some submanifold $N\subset\bdy M$. But $N$ and $M$ are $K(\Gamma,1)$'s
so $V(g)$ is a closed submanifold and therefore $N=\bdy M$. It follows that a neighborhood of the identity in $T$ preserves $\Omega^+$,
and therefore $T$ preserves $\Omega^+$

Following the proof of the first claim of \cite{CLT2}(6.24) there is a compact $X\subset T$ and compact $D\subset\bdy M$
so that $\Gamma\cdot D=\bdy M$ and $\Gamma\cdot X=T$. Let $\pi:\Omega^+\to\Omega^+/\Gamma$ be the projection and define
$$F:T\times {\bdy M}\to \Omega^+/\Gamma$$ 
by $F(g,x)=\pi(g\cdot x)$. Then $K=\Image(F)=F(X\times D)$ is compact. 
Thus there is $t$ such that $K\subset \Omega_t/\Gamma$. Hence $\bdy M=\Gamma\cdot K\subset\Omega_t$.
Now $\Omega_t$ is properly convex, so
$$Y=\cl(\CH(\pi^{-1}K))\subset\Omega_{t+1}$$ is properly convex, and $T$-invariant, and a closed subset of $\Omega_{t+1}$. Hence
${S}=\bdy Y$ is a convex hypersurface that is $T$-invariant.

Since $\dim T=\dim{S}$, and $T$ contains no elliptics, 
$T$ acts freely on ${S}$. This action is transitive since otherwise $(T\cdot x)/\Gamma$ is a $K(\Gamma,1)$ that is a proper submanifold of ${S}$, which is impossible. 
If ${S}$ is strictly convex
then $Y$ is a generalized cusp.

Otherwise there is a line segment in ${S}$. 
Hence ${S}$ contains a maximal flat $F$.  Then the subgroup $H\subset T$ that preserves $F$ acts simply transitively
on $F$. By~(\ref{stabflat}) $F\cong\interior(\Delta^k)$ is the interior of a simplex and $H|F$ is the projective diagonal group.
There is a $1$-parameter subgroup $L\subset H$ given by $L_t=\Diag(t,1,\cdots ,1)$ such that every orbit of $L$ in $F$ is a segment of a line.
Now $T\oplus\Phi$ acts simply transitively on $\Omega^+$.
Since $L$ commutes with $T$ and $\Phi$, it follows that the orbit under $L$ of every point in $\Omega^+$
is contained in a line. Then by~(\ref{opensetorbit}) $L$ is a linear flow. But $\Gamma$ approaches $L$ at infinity
so by~(\ref{approachlinear}) $\Omega$ is preserved by $L$. Then by~(\ref{preserveflow}) $\Omega$ is reducible.
{This completes the proof when $\Phi$ is parabolic.}

{If $\Phi$ is hyperbolic, then} $G=T\times\Phi$ is the diagonal group
in $\UT(n+1)$, and it follows that $\Omega^+$ is the interior of the $n$-simplex $\Delta$ with vertices 
$[e_1],\cdots,[e_{n+1}]$. Let $x\in\interior(\Delta)$ and consider the hypersurface $S=T\cdot x\subset\Delta$.
If $S$ is strictly convex then $C$ is a generalized cusp.
If $S$ is not convex  then $\Omega=\interior(\Delta)$ is reducible.
If $S$  is convex, but contains a flat, the argument above implies that $\Omega$ is reducible. 
\end{proof}


\section{Discreteness and Irreducibility}
\label{sec:Irreducibility}

This section shows that if $\pi_1M$ satisfies certain algebraic conditions then the holonomy of a properly
convex structure on $M$ is irreducible (\ref{irred}), and that a limit of such holonomies is always discrete and faithful
(\ref{noabelian}).

\begin{theorem}[Chuckrow's theorem \cite{CHU},\cite{KAP}(8.4)]\label{chuckrow} Suppose $\Gamma$ is a finitely
 generated group that does not contain a normal
infinite nilpotent subgroup $N$. Then the subset of $Hom(\Gamma,\GL(n,{\mathbb R}))$ consisting of discrete faithful
representations is closed in the usual (Euclidean) topology.
\end{theorem}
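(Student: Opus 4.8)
\emph{Proof plan.} This is the classical theorem of Chuckrow (see \cite{CHU,KAP}); the plan is to deduce it from the Zassenhaus--Kazhdan--Margulis lemma. That lemma supplies a neighborhood $\Omega$ of $\Id$ in $\GL(n,\RR)$ and a constant $c=c(n)$ such that for every subgroup $\Lambda\le\GL(n,\RR)$ the group $\langle\Lambda\cap\Omega\rangle$ lies in a connected nilpotent Lie subgroup of $\GL(n,\RR)$, and in particular $\langle D\cap\Omega\rangle$ is nilpotent of class $\le c$ whenever $D$ is discrete. Fix $\rho_i\to\rho$ with each $\rho_i$ discrete and faithful. I would first prove $\rho$ is faithful and then that $\rho(\Gamma)$ is discrete, in each case producing, were the conclusion to fail, a normal infinite nilpotent subgroup of $\Gamma$ --- which is forbidden by hypothesis.

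For faithfulness, suppose $K=\ker\rho\ne 1$. First I would show $K$ is torsion free: if $1\ne\gamma\in K$ had finite order $m$, then each $\rho_i(\gamma)$ is semisimple with eigenvalues among the $m$-th roots of unity, while $\rho_i(\gamma)\to\Id$; since there are only finitely many $m$-th roots of unity, for $i$ large every eigenvalue of $\rho_i(\gamma)$ equals $1$, forcing $\rho_i(\gamma)=\Id$ and contradicting faithfulness of $\rho_i$. Next I would show $K$ is nilpotent of class $\le c$: for any $\gamma_1,\dots,\gamma_k\in K$ we have $\rho_i(\gamma_j)\to\Id$, so for $i$ large all $\rho_i(\gamma_j)$ lie in $\Omega$; hence $\langle\rho_i(\gamma_1),\dots,\rho_i(\gamma_k)\rangle$ is nilpotent of class $\le c$, and since $\rho_i$ is injective so is $\langle\gamma_1,\dots,\gamma_k\rangle$. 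As the $c$-fold commutator identity then holds on every $(c+1)$-tuple from $K$, the group $K$ is nilpotent of class $\le c$. But a nontrivial torsion-free nilpotent group is infinite, so $K$ would be a normal infinite nilpotent subgroup of $\Gamma$; hence $K=1$.

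For discreteness, suppose $\rho(\Gamma)$ is not discrete. Then $G:=\overline{\rho(\Gamma)}$ is a closed, hence (Cartan) Lie, subgroup of $\GL(n,\RR)$ of positive dimension, so its identity component $L$ is a connected normal Lie subgroup of positive dimension; thus $\rho(\Gamma)$ normalizes $L$, and $\rho(\Gamma)\cap L$ is dense in $L$ since $L$ is open in $G$. Now $G\cap\Omega$ is a neighborhood of $\Id$ in $G$, hence generates an open --- so clopen --- subgroup of $G$, which therefore contains $L$; by the Zassenhaus lemma $\langle G\cap\Omega\rangle$ lies in a connected nilpotent Lie group, so $L$, and with it $\rho(\Gamma)\cap L$, is nilpotent. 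Then $H:=\rho^{-1}(\rho(\Gamma)\cap L)$ is normal in $\Gamma$ (because $\rho(\Gamma)\cap L$ is normal in $\rho(\Gamma)$), it is infinite (it maps onto the dense subgroup $\rho(\Gamma)\cap L$ of the positive-dimensional group $L$), and it is nilpotent (via the now-faithful $\rho$ it is isomorphic to a subgroup of the nilpotent group $L$), contradicting the hypothesis; so $\rho(\Gamma)$ is discrete. The step I expect to require the most care is the legitimate invocation of the Zassenhaus neighborhood for a non-discrete group: above this is handled by passing to the closure $G$ and using the general form of the lemma, but if one prefers only its ``discrete'' form one must instead approximate a small neighborhood of $\Id$ in $L$ by images $\rho_i(\gamma)\in\Omega$, apply the lemma to the discrete groups $\rho_i(\Gamma)$, pass to the limit in the $c$-fold commutator identity, and then argue --- via the Lie algebra --- that this identity holding near $\Id$ in the connected group $L$ makes $L$ nilpotent.
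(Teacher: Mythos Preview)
The paper does not give a proof of this statement: it is quoted as a known result with references \cite{CHU} and \cite{KAP}(8.4), and is immediately applied to deduce Corollary~(\ref{noabelian}). So there is no ``paper's own proof'' to compare against.

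Your proposal is a faithful outline of the standard argument (essentially that in \cite{KAP}). The faithfulness step is clean: the torsion-free part uses that a finite-order element of $\GL(n,\RR)$ is semisimple with eigenvalues among the $m$-th roots of unity, so convergence to $\Id$ forces the eigenvalues to be $1$ and hence the element to be $\Id$; the nilpotency of $\ker\rho$ via the Zassenhaus neighborhood applied to the discrete $\rho_i$ is correct and the passage from ``every finitely generated subgroup is nilpotent of class $\le c$'' to ``$K$ is nilpotent of class $\le c$'' is valid. In the discreteness step you correctly identify the one genuinely delicate point --- invoking a Zassenhaus-type statement for the non-discrete closure $G$ --- and you supply the right workaround: approximate generators of a small neighborhood in $L$ by $\rho_i$-images lying in $\Omega$, use the discrete Zassenhaus lemma for $\rho_i(\Gamma)$ to kill all $(c{+}1)$-fold iterated commutators, pass to the limit, and deduce nilpotency of $L$ from nilpotency of its Lie algebra. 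One small point worth making explicit when you write this up: the conclusion that $H=\rho^{-1}(\rho(\Gamma)\cap L)$ is nilpotent uses the already-established faithfulness of $\rho$, so the order of the two steps matters.
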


If $\Omega\subset\RP^n$, then $\PGL(\Omega)\subset\PGL(n+1,\RR)$ is the subgroup that preserves $\Omega$.
 We will make frequent use of the following implication.
\begin{corollary}\label{noabelian}
Suppose  $\Gamma$ is finitely generated and 
does not contain a non-trivial normal abelian subgroup. 
 Then the subset of $\Hom(\Gamma,\GL(n+1,{\mathbb R}))$ consisting of discrete faithful
representations is closed.
\end{corollary}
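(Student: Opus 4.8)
The plan is to reduce this immediately to Chuckrow's theorem~(\ref{chuckrow}); the only real content is a small piece of group theory, namely that the hypothesis ``$\Gamma$ has no non-trivial normal abelian subgroup'' implies the hypothesis needed in~(\ref{chuckrow}), that $\Gamma$ has no normal infinite nilpotent subgroup.

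First I would argue the contrapositive. Suppose $N\trianglelefteq\Gamma$ is a non-trivial nilpotent normal subgroup (this in particular covers the case that $N$ is infinite). Since $N$ is nilpotent and non-trivial, its center $Z(N)$ is non-trivial. The center is a characteristic subgroup of $N$, and a characteristic subgroup of a normal subgroup is normal in the ambient group, so $Z(N)\trianglelefteq\Gamma$. But $Z(N)$ is abelian, so $\Gamma$ contains a non-trivial normal abelian subgroup, contrary to hypothesis. Hence no such $N$ exists; in particular $\Gamma$ has no normal infinite nilpotent subgroup.

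Then I would apply Theorem~(\ref{chuckrow}) with $n$ replaced by $n+1$: since $\Gamma$ is finitely generated and has no normal infinite nilpotent subgroup, the set of discrete faithful representations in $\Hom(\Gamma,\GL(n+1,\RR))$ is closed, which is the assertion. There is essentially no obstacle; the only point requiring a moment's care is the claim that the center of a normal subgroup is itself normal in $\Gamma$ (resting on ``characteristic in normal is normal''), together with the trivial observation that the word ``infinite'' in~(\ref{chuckrow}) only weakens its hypothesis, so the no-normal-abelian assumption is more than enough to invoke it.
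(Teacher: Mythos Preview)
Your proof is correct and follows essentially the same argument as the paper: both reduce to Chuckrow's theorem by showing that a normal nilpotent subgroup has non-trivial center, which is characteristic and hence normal in $\Gamma$, contradicting the hypothesis. The paper's proof is nearly word-for-word the same as yours.
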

\begin{proof} 
Suppose $\Gamma$ contains an infinite normal nilpotent subgroup $G$.
Let $Z$ be the center of $G$. Then $Z$ is non-trivial and abelian.
Since $Z$ is characteristic in $G$, and $G$ is normal in $\Gamma$, it follows that $Z$ is normal  in $\Gamma$.
This  contradicts a hypothesis.  The result now follows from~(\ref{chuckrow}).\end{proof}
The next result, is due to Benoist \cite{Ben5}, see also \cite{MR3888622}. 
\begin{lemma}\label{virtabnormal}
If $M$ is closed and properly convex,
 then $\pi_1M$ contains a non-trivial normal abelian subgroup if and only if $\pi_1M$ has non-trivial virtual
center.
\end{lemma}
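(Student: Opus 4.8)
The plan is to prove both implications, using throughout that $\Gamma:=\pi_1M$ is torsion free — it acts freely on the contractible finite-dimensional set $\Omega$, and no non-trivial finite group acts freely on such a space — so that every non-trivial subgroup of $\Gamma$ is infinite, and using that neither condition in the statement is affected by passing to a finite-index subgroup.

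\emph{Non-trivial virtual centre implies non-trivial normal abelian subgroup.} Suppose $\Gamma_0\le\Gamma$ has finite index with $Z(\Gamma_0)\ne 1$. Replace $\Gamma_0$ by a finite-index normal subgroup $\Gamma_1\trianglelefteq\Gamma$ with $\Gamma_1\le\Gamma_0$, e.g.\ the kernel of the action of $\Gamma$ on $\Gamma/\Gamma_0$. Each element of $Z(\Gamma_0)$ commutes with all of $\Gamma_0\supseteq\Gamma_1$, so $Z(\Gamma_0)\cap\Gamma_1\le Z(\Gamma_1)$; since $[Z(\Gamma_0):Z(\Gamma_0)\cap\Gamma_1]\le[\Gamma_0:\Gamma_1]<\infty$ and $Z(\Gamma_0)$ is infinite, $Z(\Gamma_1)\ne 1$. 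It is abelian and, being characteristic in $\Gamma_1\trianglelefteq\Gamma$, normal in $\Gamma$. (The same computation shows that having non-trivial virtual centre is inherited by, and from, finite-index subgroups, and that a non-trivial normal abelian $A\trianglelefteq\Gamma$ meets any finite-index $\Gamma'\le\Gamma$ in a non-trivial normal abelian subgroup of $\Gamma'$; hence in the other implication we may replace $\Gamma$ by a convenient finite-index subgroup.)

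\emph{Non-trivial normal abelian subgroup implies non-trivial virtual centre.} Let $A\trianglelefteq\Gamma$ be non-trivial abelian. It is enough to show the centraliser $C:=C_\Gamma(A)$ has finite index, for then $A\le Z(C)$ and $C$ witnesses a non-trivial virtual centre; equivalently, the conjugation homomorphism $\Gamma\to\operatorname{Aut}(A)$ should have finite image. This is where the geometry enters, via Benoist's structure theory of divisible convex sets: as $\Omega/\Gamma$ is closed and properly convex, $\Gamma$ has a finite-index subgroup $\Gamma'$ isomorphic to a direct product $\Gamma_1\times\cdots\times\Gamma_k\times\ZZ^m$ (with $m\ge 0$), where each $\Gamma_i$ divides an irreducible properly convex open set of dimension at least two and the $\ZZ^m$ factor accounts for the simplicial and rescaling part of the decomposition. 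Since $\Gamma_i$ acts irreducibly, a Schur-type argument (otherwise the isotypic decomposition of the ambient space under a normal abelian subgroup would be permuted transitively by $\Gamma_i$, forcing a finite-index subgroup to fix a proper subspace unless that subgroup is scalar, hence trivial in $\operatorname{PGL}$) shows $\Gamma_i$ has no non-trivial normal abelian subgroup. Now $A':=A\cap\Gamma'$ is a non-trivial normal abelian subgroup of $\Gamma'$; its image under each projection $\Gamma'\to\Gamma_i$ is normal abelian in $\Gamma_i$, hence trivial, so $A'$ lies in the $\ZZ^m$ factor, which is central in $\Gamma'$. Thus $Z(\Gamma')\ne 1$, so $\Gamma$ has non-trivial virtual centre.

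The main obstacle is the input cited in the previous paragraph — Benoist's decomposition of a divisible convex set, up to finite index, into irreducible factors and a flat part, together with irreducibility of the holonomy of each factor — which is exactly where the hypothesis that $M$ is closed and properly convex is used. A more self-contained route to "$C_\Gamma(A)$ has finite index" is a Flat Torus Theorem argument for the Hilbert metric on $\Omega$: when every element of $A$ is hyperbolic, $\operatorname{Min}(A)\subset\Omega$ is closed and, since $A\trianglelefteq\Gamma$, is $\Gamma$-invariant, and it carries a maximal Euclidean factor $\RR^\ell$ on which $A$ acts as a cocompact lattice of translations; as the normaliser of that lattice in $\Isom(\RR^\ell)$ meets the orthogonal group in a finite group, a finite-index subgroup of $\Gamma$ acts by translations on the $\RR^\ell$ factor. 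Promoting this to ``centralises $A$'' is then the delicate point, requiring one to handle the maximal choice of $A$, the non-uniqueness of $\operatorname{Min}$, and the parabolic elements. Either way, the remaining steps are bookkeeping with finite-index subgroups and torsion freeness.
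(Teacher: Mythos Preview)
Your first implication matches the paper's argument.

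For the converse your route is genuinely different from the paper's. You appeal to Benoist's decomposition of a divisible properly convex domain, obtaining a finite-index $\Gamma'\cong\Gamma_1\times\cdots\times\Gamma_k\times\ZZ^m$ with each $\Gamma_i$ dividing an irreducible factor, and then argue each $\Gamma_i$ has no non-trivial normal abelian subgroup via strong irreducibility and a Clifford/Schur argument. This works, though your parenthetical sketch elides one point: over $\RR$, a single isotypic component for an abelian normal $N\trianglelefteq\Gamma_i$ does not by itself force $N$ to be scalar --- you must also rule out the $2$-dimensional complex-type irreducible, e.g.\ by noting that every non-trivial element of $\Gamma_i$ is hyperbolic (since $\Omega_i/\Gamma_i$ is closed) and hence has a real eigenvalue of maximal modulus.

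The paper's argument is much more elementary and avoids the decomposition theorem entirely. Since $M$ is closed, every element of $\Gamma$ is hyperbolic and the translation length $\tau(g)=\inf_x d_\Omega(x,gx)$ is bounded below on $\Gamma\setminus\{1\}$ by some $\mu>0$. On the abelian subgroup $A$ the eigenvalue moduli assemble into a homomorphism $\lambda:A\to\RR_+^{n+1}$ with $\tau(a)=\log(\lambda_+/\lambda_-)$; hence $\lambda$ is discrete and faithful, the set $B\subset A$ of minimizers of $\tau|_A$ is finite, conjugation by $\Gamma$ permutes $B$, and the finite-index kernel of this permutation action centralises $B$. Your approach is structurally transparent but imports a substantial external theorem as a black box; the paper's stays within the basic Hilbert-metric toolkit and is self-contained.
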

\begin{proof} Set $G=\pi_1M$. Suppose $Z=Z(H)\ne 1$ is the center of a finite index subgroup $H\subset G$. 
Since the universal cover of $M$
is contractible, $G$ is torsion-free, so $Z$ is infinite. There is a subgroup $H'\subset H$ of finite index
with $H'\vartriangleleft G$. Then $H'\cap Z$ has finite index in $Z$, and is central in $H'$,
  therefore  $Z'=Z(H')$ is non-trivial. Thus, after replacing $H$
by $H'$, we may assume $H\vartriangleleft G$. Now $Z$ is characteristic
in $H$ and thus normal in $G$. Hence $Z\ne 1$ is an infinite  normal abelian subgroup of $G$.

For the converse, suppose $1\ne A\vartriangleleft G$ and $A$ is abelian. 
Let $\Gamma$ be the image of the holonomy $\rho:\pi_1M\to\SL^{\pm}(n+1,\RR)$ of $M$. Every element of $\Gamma$ is hyperbolic
because $M$ is closed. Let $d_{\Omega}$ be the Hilbert metric on $\Omega$. The displacement function $\tau:\Gamma\to \RR$ is given by $$\tau(g)=\inf\{d_{\Omega}(x,gx):\ x\in\Omega\}$$
and $\mu=\min\tau(\pi_1M)>0$ because $M$ is compact. Since $A$ is abelian, the moduli of the weights of $\rho|A$ give a homomorphism
$$\lambda:A\to\RR_+^{n+1}$$
By (2.1) in \cite{CLT1} for $a\in A$ we have $\tau(a)=\log(\lambda_+/\lambda_-)$, where $\lambda_+,\lambda_-$ are the maximum and minimum
moduli of eigenvalues of $\rho(a)$. 
Since $\mu>0$ it follows that $\lambda$ is discrete and faithful. Hence
the subset $B\subset A$ of elements that minimize $\tau|A$ is finite.
Now $\tau(ghg^{-1})=\tau(h)$ so the action of $G$ on $A$ by conjugation permutes $B$. Since $B$ is finite, the kernel of this action
is a finite index subgroup $H\subset \pi_1M$ that fixes each element of $B$. Thus $B\subset Z(H)$.
\end{proof}

However when one deals with manifolds that are not closed, these statements are not equivalent, as the following example illustrates.

\halfgap

\noindent{\em Example.}\ \  Let $M$ be a 3-manifold that is a torus bundle over $S^1$ with monodromy $A\in\SL(2,\ZZ)$. Then $M$
is Euclidean if $A$ is periodic; NIL if $\tr(A)=\pm2$ and $A\ne\pm \text{Id}$; and otherwise $M$ has a SOLV geometry. 
For each $s>0$ there is an affine realization of this  structure as $\RR^3/\Gamma_s,$ where $\Gamma_s\cong\pi_1M\cong\ZZ\ltimes_A\ZZ^2$ is
the image of
 $$\rho_s(m,n,p)=\bpmat  A^p & 0 &  s\bpmat m\\ n\epmat\\
0 &1 & p\\
0 & 0 & 1
\epmat\qquad m,n,p\in\ZZ$$
This  gives a path of discrete  faithful
representations $\rho_s:\pi_1M\to\PGL(4,\RR)$ whose images converge
 to a cyclic group as $s\to0$. 
 
 Now $\PGL(4,\RR)$ acts on $\SL(4,\RR)/\SO(4)$
realized as the properly convex domain  $\Omega\subset\RP^5$ obtained by projectivizing the space of positive
definite quadratic forms on $\RR^3$. This gives a sequence of properly convex projective $5$-manifolds, homeomorphic to an
$\RR^2$-bundle over $M$, and the holonomies converge to a non-faithfull representation. In the SOLV case $\pi_1M$
has trivial virtual center, but contains $\ZZ^2$ as a normal subgroup.\qed

\gap

Let $W=\RR^{n+1}$ and suppose $\Omega\subset\PP W$ is open and $M=\Omega/\Gamma$ is a properly convex manifold.
We say {\em $\Gamma$ is reducible} if there is a proper projective subspace $P=\PP U\subset\PP W$
that is preserved by $\Gamma$. 
Let $W^*=\Hom(W,\RR)$ denote the dual vector space. The dual manifold $M^*=\Omega^*/\Gamma^*$
is properly convex and diffeomorphic to $M$. 
Now
$W=U\oplus V$ and $W^*=U^*\oplus V^*$ where $V^*=\{\phi\in W^*:\phi(U)=0\}$ and similarly for $U^*$.
Moreover $\Gamma$ preserves $\P(U)$ if and only if $\Gamma^*$  preserves $\P(V^*)$.

\begin{lemma}\label{redbdy} With the notation above,  if $\Omega'=P\cap  \Omega\ne\emptyset$
then $L=\Omega'/\Gamma$ is a convex submanifold of $M$  and $\dim L=\dim P$, and the inclusion 
$L\hookrightarrow M$  is a homotopy equivalence.

If $P\cap\cl\Omega=\emptyset$
then  $M$ contains a closed submanifold $L$ of codimension $\dim P$ and 
 $L\hookrightarrow M$  is a homotopy equivalence.
\end{lemma}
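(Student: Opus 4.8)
\emph{The case $\Omega'=P\cap\Omega\neq\emptyset$.} The whole mechanism is that a non-empty convex set is contractible, so quotients by free properly discontinuous $\Gamma$-actions are aspherical and ``homotopy equivalence'' becomes a $\pi_1$-statement. Since $\Gamma$ preserves both $P$ and $\Omega$ it preserves $\Omega'$, which is a non-empty open convex subset of $P\cong\RP^{\dim P}$, so $\dim\Omega'=\dim P$. The $\Gamma$-action on $\Omega'$ is the restriction of a free, properly discontinuous action, hence itself free and properly discontinuous, so $L=\Omega'/\Gamma$ is a manifold of dimension $\dim P$. It is a \emph{convex} submanifold of $M$: for $p,q\in\Omega'$ the segment of $\Omega$ joining them lies on the line through $p$ and $q$, which is contained in $P$, hence in $\Omega'$. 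It is closed in $M$ because $\Omega'=P\cap\Omega$ is closed in $\Omega$ (as $P$ is closed in $\RP^n$). For the homotopy equivalence: $\Omega$ and $\Omega'$ are contractible, so $M$ and $L$ are both $K(\Gamma,1)$'s; the inclusion $L\hookrightarrow M$ is covered by the $\Gamma$-equivariant inclusion of universal covers $\Omega'\hookrightarrow\Omega$, so it induces the identity $\Gamma\to\Gamma$ on $\pi_1$, and a $\pi_1$-isomorphism of aspherical complexes is a homotopy equivalence by Whitehead.

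\emph{The case $P\cap\cl\Omega=\emptyset$.} I would reduce to the first case using the duality recalled just above the lemma: $\Gamma$ preserves $P=\P(U)$ if and only if $\Gamma^*$ preserves $P^*:=\P(V^*)$ with $V^*=\operatorname{Ann}(U)\subset W^*$. The one extra ingredient is the separation statement: \emph{if $P\cap\cl\Omega=\emptyset$ then $P^*\cap\Omega^*\neq\emptyset$.} To prove it, fix one of the two closed convex cones $\widehat{\cl\Omega}\subset W$ lying over $\cl\Omega$ and let $q\colon W\to W/U$ be the quotient. The hypothesis says $\widehat{\cl\Omega}\cap U=\{0\}$, and since $\widehat{\cl\Omega}$ is a pointed closed convex cone this forces $q(\widehat{\cl\Omega})$ to be a pointed closed convex cone as well; it has non-empty interior, being the image of an open cone, so there is $\bar\phi\in(W/U)^*$ with $\bar\phi>0$ on $q(\widehat{\cl\Omega})\setminus\{0\}$. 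Then $\phi:=\bar\phi\circ q$ vanishes on $U$ and is positive on $\widehat{\cl\Omega}\setminus\{0\}$, i.e. $[\phi]\in P^*\cap\Omega^*$. Now apply the first case to the properly convex manifold $M^*=\Omega^*/\Gamma^*$ and the $\Gamma^*$-invariant subspace $P^*$ with $P^*\cap\Omega^*\neq\emptyset$: this yields a closed convex submanifold $L^*\subset M^*$ with $L^*\hookrightarrow M^*$ a homotopy equivalence. Transporting $L^*$ through the diffeomorphism $M^*\cong M$ produces the required closed submanifold $L\subset M$ with $L\hookrightarrow M$ a homotopy equivalence, whose codimension is the codimension of $P^*$ in $\P(W^*)$; this is the point at which I would double-check that the dimension bookkeeping ($\dim V^*=\dim W-\dim U$) indeed gives the asserted codimension $\dim P$.

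\emph{Expected obstacle, and an alternative.} Everything except the separation statement is formal; that statement is exactly where $P\cap\cl\Omega=\emptyset$ is used essentially (it is what keeps $q(\widehat{\cl\Omega})$ pointed, and the claim is false without it), and the accompanying dimension count is the only thing needing real attention. If one prefers $L$ constructed directly inside $M$: projecting $\Omega$ away from $P$ gives a properly convex $\bar\Omega\subset\P(W/U)$ whose fibres are \emph{bounded} convex open sets (boundedness again uses $P\cap\cl\Omega=\emptyset$); the fibrewise barycentre is a $\Gamma$-equivariant section $\bar\Omega\to\Omega$ — equivariant because barycentres commute with affine maps, and, crucially, requiring no $\Gamma$-invariant hyperplane — whose image $\tilde L$ is a $\Gamma$-invariant closed submanifold of $\Omega$, so that $L=\tilde L/\Gamma$ works by the same asphericity argument as in the first case.
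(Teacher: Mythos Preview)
Your proof is correct and follows essentially the same route as the paper. The paper dismisses the first part as ``immediate'' and, for the second, writes only: there is a hyperplane $H\supset P$ disjoint from $\cl\Omega$, so $H=[\ker\phi]$ with $\phi\in V^*$ and $[\phi]\in\P(V^*)\cap\Omega^*$; now apply the first part to $M^*$ and use $M\cong M^*$. Your quotient-cone argument is precisely a proof of the existence of that separating hyperplane, so the two arguments coincide; your caution about the dimension count is well placed, since with the usual convention $\dim P=\dim U-1$ the codimension actually comes out to $\dim P+1$, though only the inequality $\dim L<\dim M$ is used downstream. Your barycentre alternative is a nice direct construction the paper does not give.
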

\begin{proof}
The first conclusion is immediate.  
Now suppose that $P\cap\cl\Omega=\emptyset$. There is a projective hyperplane $H$ that contains $P$
and is disjoint from $\cl\Omega$. Then $H=[\ker\phi]$ for some $\phi\in V^*$ with $\phi(U)=0$.
It follows that 
   $[\phi]\in W=\P(V^*)\cap\Omega^*\ne\emptyset$. The result now follows from the first part, using the diffeomorphism $M\cong M^*$.\end{proof}

\begin{lemma}\label{redsubgrp} With the hypotheses of (\ref{redbdy}) suppose either that $M$ is closed or else that
$\Gamma$ contains a subgroup $\Gamma'$
of infinite index and $H_{n-1}(\Omega/\Gamma';\ZZ_2)\ne 0$, then $\emptyset\ne P\cap\cl\Omega\subset\Fr\Omega$
\end{lemma}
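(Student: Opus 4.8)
The plan is to establish the two assertions $P\cap\Omega=\emptyset$ and $P\cap\cl\Omega\ne\emptyset$ separately; the first of these already gives $P\cap\cl\Omega\subset\Fr\Omega$, since $\Omega$ is open and hence $\Fr\Omega=\cl\Omega\setminus\Omega$.

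To prove $P\cap\Omega=\emptyset$, suppose for a contradiction that $\Omega'=P\cap\Omega\ne\emptyset$. Then $\Omega'$ is a nonempty convex open subset of $P$, invariant under $\Gamma$ and hence under $\Gamma'$, on which $\Gamma$ and $\Gamma'$ act freely and properly discontinuously, and $\dim\Omega'=\dim P\le n-1$. By (\ref{redbdy}), applied both to $\Gamma$ and (since $\Gamma'$ also preserves $P$) to $\Gamma'$ in place of $\Gamma$, the inclusions $\Omega'/\Gamma\hookrightarrow M$ and $\Omega'/\Gamma'\hookrightarrow\Omega/\Gamma'$ are homotopy equivalences, where $\Omega'/\Gamma$ and $\Omega'/\Gamma'$ are connected manifolds without boundary of dimension $\le n-1$. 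In the closed case, $H_n(M;\ZZ_2)\cong\ZZ_2$ while $H_n(\Omega'/\Gamma;\ZZ_2)=0$ for dimension reasons, contradicting the first homotopy equivalence. In the other case, $H_{n-1}(\Omega/\Gamma';\ZZ_2)\cong H_{n-1}(\Omega'/\Gamma';\ZZ_2)$ is nonzero; but a connected manifold without boundary of dimension $\le n-1$ can have nonzero $(n-1)$-st homology with $\ZZ_2$-coefficients only if it is a compact $(n-1)$-manifold, so this forces $\dim P=n-1$ and $\Omega'/\Gamma'$ compact. Then $\Omega'/\Gamma$ is compact too, being the image of $\Omega'/\Gamma'$ under the covering projection $\Omega'/\Gamma'\to\Omega'/\Gamma$; but that covering has degree $|\Gamma:\Gamma'|=\infty$, and a covering map of infinite degree cannot have compact total space. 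Hence $P\cap\Omega=\emptyset$.

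Now suppose $P\cap\cl\Omega=\emptyset$. As in the proof of (\ref{redbdy}) there is a hyperplane $H\supseteq P$ disjoint from $\cl\Omega$, and writing $H=[\ker\phi]$ with $\phi\in V^*=\{\psi\in W^*:\psi(U)=0\}$ one gets $[\phi]\in\PP(V^*)\cap\Omega^*$. Here $\PP(V^*)$ is a proper projective subspace of $\PP W^*$ (nonempty and proper because $0\ne U\ne W$) and it is preserved by $\Gamma^*$. Since $M^*$ is diffeomorphic to $M$ and $\Gamma^*\cong\Gamma$, the hypotheses of the lemma hold for $(\Omega^*,\Gamma^*)$: in the second case one takes the image of $\Gamma'$ under an isomorphism $\Gamma\cong\Gamma^*$ induced by a diffeomorphism $M\cong M^*$, so that the corresponding cover of $M^*$ is diffeomorphic to $\Omega/\Gamma'$ and still has nonzero $(n-1)$-st $\ZZ_2$-homology. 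Applying the equality proved in the previous paragraph to $\PP(V^*)\subset\PP W^*$ gives $\PP(V^*)\cap\Omega^*=\emptyset$, contradicting $[\phi]\in\PP(V^*)\cap\Omega^*$. Therefore $P\cap\cl\Omega\ne\emptyset$.

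The step I expect to be the main obstacle is the infinite-index case above: the key realization is that the homology hypothesis forces $\Omega'/\Gamma'$ to be a \emph{compact} $(n-1)$-manifold, after which compactness propagates to $\Omega'/\Gamma$ and collides with $|\Gamma:\Gamma'|=\infty$. The remaining work --- verifying that the hypotheses transfer to the dual and matching up the relevant covers of $M$ and $M^*$ --- is routine bookkeeping.
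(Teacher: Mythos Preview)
Your proof is correct and follows essentially the same approach as the paper. The only organizational difference is that the paper handles the two sub-cases uniformly by invoking both conclusions of Lemma~\ref{redbdy} at once (so a single submanifold $L\hookrightarrow M$ of dimension $<n$ appears in either case, and one passes to the $\Gamma'$-cover $L'$ directly), whereas you treat $P\cap\Omega\ne\emptyset$ first and then reduce the case $P\cap\cl\Omega=\emptyset$ to it by explicit dualization---which is exactly what the second half of Lemma~\ref{redbdy} does internally. The homological core (nonvanishing $(n-1)$-st $\ZZ_2$-homology forces the cover to be a closed $(n-1)$-manifold, hence a finite cover, contradicting $|\Gamma:\Gamma'|=\infty$) is identical.
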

\begin{proof} Otherwise by (\ref{redbdy}) there is a submanifold $L$ of $M$ such that the inclusion $L\hookrightarrow M$ is a homotopy
equivalence and $\dim L<\dim M$. 
If $M$ is closed then $H_n(M;\ZZ_2)\ne0$ but $\dim L<\dim M$ so 
 $H_n(L;\ZZ_2)=0$ which is a contradiction.  
 
 If $M$ is not closed, let $M'$ and $L'$ be the covers of $M$ and $L$ corresponding to $\Gamma'$.
Then $H_{n-1}(L';\ZZ_2)\cong H_{n-1}(\Omega/\Gamma';\ZZ_2)\ne 0$ and it follows that $L'$ is a closed manifold of dimension $(n-1)$, and therefore a finite cover of $L$. Hence $|\Gamma:\Gamma'|<\infty$, which
 contradicts $|\Gamma:\Gamma'|=\infty$.
\end{proof}
We will apply this when  $M$
contains a convex, closed  submanifold $N=\Omega'/\Gamma'$ of codimension one
with $\Omega'\subset\Omega$ and $|\Gamma:\Gamma'|=\infty$.
The following applies to a properly convex manifold that contains a generalized cusp.

\begin{lemma}\label{specialsubgrp} Suppose $M=\Omega/\Gamma$ is a properly convex manifold, and 
$\Omega'\subset\Omega$ is a convex closed subset bounded by a smooth, connected hypersurface
$\bdy\Omega'\ne\emptyset$, that contains no line segment. Let $\Gamma'\subset\Gamma$ be the subgroup  that preserves $\Omega'$
and suppose that $|\Gamma:\Gamma'|=\infty$, and $N=\bdy\Omega'/\Gamma'$ is a compact manifold. Suppose there is $\gamma\in\Gamma$ such that
$\gamma\Omega'\cap\Omega'=\emptyset$.
Then $\Gamma$ does not preserve a proper projective subspace.
\end{lemma}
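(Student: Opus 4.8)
The plan is to combine Lemma~(\ref{redsubgrp}) with the hypotheses to force a contradiction. Suppose for contradiction that $\Gamma$ preserves a proper projective subspace $P = \PP U$. The first step is to observe that $N = \bdy\Omega'/\Gamma'$ is a closed manifold of dimension $n-1$, so $H_{n-1}(N;\ZZ_2) = H_{n-1}(\Omega'/\Gamma';\ZZ_2) \cong \ZZ_2 \ne 0$ — here I use that $\bdy\Omega'$ is connected and that $\Omega'$ deformation retracts onto its boundary hypersurface (since $\Omega'$ is a convex set bounded by the single smooth hypersurface $\bdy\Omega'$, it is homeomorphic to $\bdy\Omega' \times [0,\infty)$). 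Thus the subgroup $\Gamma'$ of infinite index satisfies the homological hypothesis of Lemma~(\ref{redsubgrp}), and we conclude $\emptyset \ne P \cap \cl\Omega \subset \Fr\Omega$.

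The second step is to upgrade this to a contradiction using the hypersurface $\bdy\Omega'$ and the element $\gamma$. From Lemma~(\ref{redbdy}), since $P\cap\cl\Omega\ne\emptyset$ but $P\cap\cl\Omega\subset\Fr\Omega$, the first alternative of~(\ref{redbdy}) does not directly apply (that required $P\cap\Omega\ne\emptyset$), so I need to think about where $P$ meets $\cl\Omega$. The natural move is to pass to the dual picture: $\Gamma^*$ preserves $\PP(V^*)$, and by the same reasoning applied to $M^*$ we would get constraints there too. Alternatively — and I think this is cleaner — I would argue directly that $P$ must meet $\bdy\Omega'$. Indeed, $\Gamma'$ preserves both $\Omega'$ and $P$, and $\Gamma'$ acts cocompactly on $\bdy\Omega'$; if $P\cap\cl\Omega$ is a $\Gamma$-invariant (hence $\Gamma'$-invariant) subset of $\Fr\Omega$, one examines its intersection with $\cl\Omega'$. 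Since $\bdy\Omega'$ contains no line segment and $P\cap\cl\Omega'$ is convex, $P\cap\cl\Omega'$ meets $\bdy\Omega'$ in at most... this is where the geometry of the no-segment condition enters.

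The cleanest route, I expect, is: the $\Gamma'$-invariant convex set $P\cap\cl\Omega$, being contained in $\Fr\Omega$, has its convex hull inside $\Fr\Omega$; its intersection with $\cl\Omega'$ is a compact (mod $\Gamma'$) convex $\Gamma'$-invariant subset of $\Fr\Omega'=\bdy\Omega'$. Because $\bdy\Omega'$ contains no segment, any convex subset of it is a single point, so $P\cap\cl\Omega'$ is either empty or a single point $x$ fixed by $\Gamma'$. But $\Gamma'$ acting cocompactly on the $(n-1)$-manifold $\bdy\Omega'$ cannot fix a point of $\bdy\Omega'$ (it acts freely and cocompactly, so has no global fixed point unless $n-1=0$, and even then cocompactness of an infinite-index... ). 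More robustly: a single fixed point gives $H_{n-1}(\bdy\Omega'/\Gamma';\ZZ_2)=0$ unless $\Gamma'$ is trivial, contradicting the computation above. Then using $\gamma$ with $\gamma\Omega'\cap\Omega'=\emptyset$, I separate $\Omega'$ from $\gamma\Omega'$ by a hyperplane and play the same game on the $\gamma$-translate, or more simply note $P=\gamma P$ must then miss $\cl\Omega'$ entirely and also $\cl(\gamma\Omega')$, yet $P\cap\cl\Omega\ne\emptyset$ forces $P\cap\cl\Omega$ to lie in the part of $\Fr\Omega$ disjoint from all $\Gamma$-translates of $\Omega'$, which the cocompactness/covering structure should rule out.

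The main obstacle will be the second step: pinning down precisely why a $\Gamma'$-invariant convex subset of $\Fr\Omega$ meeting $\cl\Omega'$ must meet $\bdy\Omega'$, and then extracting the contradiction from the no-segment hypothesis together with cocompactness of the $\Gamma'$-action on $\bdy\Omega'$. The role of $\gamma$ (with $\gamma\Omega'\cap\Omega'=\emptyset$) is presumably to guarantee $\Gamma':=\stab(\Omega')$ genuinely has infinite index and that $\Omega'$ is not all of $\Omega$, keeping us in the setting of Lemma~(\ref{redsubgrp}); I would verify that $|\Gamma:\Gamma'|=\infty$ is exactly what~(\ref{redsubgrp}) consumes and that the existence of $\gamma$ with disjoint translate is what we use to locate $P\cap\cl\Omega$ away from the "interior collar" of the cusp. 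I expect the write-up to reduce, after these observations, to a short contradiction with the homology computation $H_{n-1}(N;\ZZ_2)\ne 0$.
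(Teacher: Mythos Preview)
Your first step---invoking Lemma~(\ref{redsubgrp}) to place $P\cap\cl\Omega$ inside $\Fr\Omega$---matches the paper. (Be careful though: the hypothesis of~(\ref{redsubgrp}) is $H_{n-1}(\Omega/\Gamma';\ZZ_2)\ne 0$, not $H_{n-1}(\Omega'/\Gamma';\ZZ_2)\ne 0$; you computed the latter.)

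The second step has a genuine gap. You assert that $P\cap\cl\Omega'$ lands in $\bdy\Omega'$ and then use the no-segment hypothesis there. But $\bdy\Omega'\subset\Omega$, while you have just established $P\cap\cl\Omega\subset\Fr\Omega$; hence $P$ misses $\bdy\Omega'$ entirely, and $P\cap\cl\Omega'$ lies in the ideal part $\cl\Omega'\cap\Fr\Omega$, where the no-segment condition on $\bdy\Omega'$ says nothing. From that point your argument has no mechanism to produce a contradiction; the vague appeal to ``cocompactness/covering structure'' is not an argument.

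The paper uses the smoothness and no-segment hypotheses in a completely different way: they guarantee a well-defined nearest-point retraction $\pi:\cl\Omega\to\cl\Omega'$ for the Hilbert metric. One first shows (via a Hilbert-distance estimate using $\gamma$) that the relative interior $W$ of $P\cap\cl\Omega$ is disjoint from $\cl\Omega'$. Then $\pi$ restricts to a $\Gamma'$-equivariant injection $W\to\bdy\Omega'$, which descends to a map into the closed manifold $N$; a homotopy-equivalence/dimension argument forces this to be surjective, so $W$ equals all of $\Fr\Omega\setminus\Fr\Omega'$. Running the identical argument with $\gamma\Omega'$ in place of $\Omega'$ then yields two incompatible descriptions of $W$, and that is the contradiction. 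The role of $\gamma$ is not merely to certify infinite index---it is used twice, once in the distance estimate and once to produce the second, conflicting copy.
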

\begin{proof} Suppose $\Gamma$ preserves $P=\PP(U)$. By (\ref{redsubgrp}) we may assume
that $W^+= \cl\Omega\cap P$ is not empty, convex, and is contained in $\Fr\Omega$.
Let $W=W^+\setminus\bdy W^+$, so that $W$ is an open properly convex set
and $\dim W<\dim M$.

First suppose there is $x\in W\cap\cl\Omega'$. Then $\gamma x\in W\cap\gamma\Omega'$ and
$d_{W}(x,x')<\infty$. Given $p\in\bdy \Omega'$ then, since $\Omega'$ is convex,
the segment $\ell=[p,x)$ is contained in $\Omega'$ and $\gamma\ell\subset\gamma\Omega'$. 
If $y$ is on $\ell$ and close enough to $x$ then $d_{\Omega}(y,\gamma y)\le d_W(x,\gamma x)+1$.
This implies $d_{\Omega}(y,\bdy\Omega')\le d_W(x,\gamma x)+1$. But $d_{\Omega}(y,\bdy\Omega')\to\infty$
as $y\to x$. It follows that $W\cap\cl\Omega'=\emptyset$.

Since $\Omega'$ is closed in $\Omega$, for each $x\in\Omega$ there is a point $y\in\Omega'$ 
that minimizes $d_{\Omega}(x,y)$. Since $\Omega'$ is convex, and $\bdy\Omega'$ smooth, and contains
 no line segment, 
$y$ is unique and the map $\pi:\Omega\rightarrow\Omega'$ given by $\pi x=y$ is distance non-increasing.
Moreover, if $x\in\bdy\Omega'$ then $\pi^{-1}x$ has closure a segment $[x,y]$ with $y\in\bdy\Omega$. Thus
there is a continuous extension $\pi:\cl\Omega\rightarrow\cl\Omega'$ where the closures are in projective space.
Let $W'=\Fr\Omega\setminus \Fr\Omega'$, then $\pi|:W\rightarrow\bdy\Omega'$
is a homeomorphism.

Clearly $\pi$ is $\Gamma'$ equivariant. Restricting gives an injective map $\pi :W\to\bdy\Omega'$.
 The action of $\Gamma$ on $\Omega'$ is free and properly
discontinuous. Thus the same is true  for the action on
$W$, and $\pi$ covers a map $f:W/\Gamma'\to\Omega'/\Gamma'=N$
that is a homotopy equivalence. Since $N$ is closed  it follows that $f$
is surjective and thus $\pi$ is a surjective. 
 Hence $W=W'=\Fr\Omega\setminus \Fr\Omega'$. But the same is true when $\Omega$
is replaced by $\gamma\Omega$. Thus there is $x\in\Fr\Omega'\cap\Fr\Omega=\Fr\Omega'\cap\Fr\gamma\Omega$.
As before there is a line $\ell=[p,x)$ in $\Omega$ and another line $\ell'=[p',x)$
in $\Omega'$ and this is a contradiction.
\end{proof}

The following restricts the fundamental group of a reducible manifold.
\begin{lemma}\label{plusZ} Suppose $\Omega$ is reducible and $M=\Omega/\Gamma$ is properly convex.
If $\Gamma$ contains no non-trivial normal abelian subgroup then there is a properly convex
manifold $\Omega/\Gamma^+$ with $\Gamma^+\cong\Gamma\times\ZZ$.
\end{lemma}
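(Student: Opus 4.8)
\emph{Proof proposal for Lemma~\ref{plusZ}.}
The plan is to produce $\Gamma^+$ by adjoining to $\Gamma$ a single hyperbolic one‑parameter‑group element that commutes with all of $\Gamma$, coming from the join structure forced by reducibility. Write $W=U\oplus V$ for the reducing decomposition, so every point of $\Omega$ lies on a segment joining $\P(U)\cap\cl\Omega$ to $\P(V)\cap\cl\Omega$; since $\Omega$ is open it is not contained in a proper subspace, so $W=U\oplus V$, and by convexity the segments realising the join lie in $\Omega$. Let $\Phi$ be the one‑parameter subgroup of $\PGL(W)$ acting as $[e^tI_U\oplus I_V]$: it is a hyperbolic linear flow with stationary set $\P(U)\sqcup\P(V)$, and since it fixes the endpoints of, and preserves, each of those segments, it preserves $\Omega$; thus $\Phi\subset\PGL(\Omega)$.

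The first step is to arrange that \emph{some} reducing decomposition is $\Gamma$‑invariant. Any $\gamma\in\Gamma$ carries a reducing decomposition $W=U'\oplus V'$ to the reducing decomposition $W=\gamma U'\oplus \gamma V'$; using that $\Omega$ is then the join of the nonzero pieces among $U'\cap\gamma U'$, $U'\cap\gamma V'$, $V'\cap\gamma U'$, $V'\cap\gamma V'$, one may refine, and iterating this over a generating set (the number of pieces is bounded by $n+1$, so it stabilises) yields $W=W_1\oplus\cdots\oplus W_k$ with $\Omega=\Omega_1*\cdots*\Omega_k$, $k\ge 2$, which $\Gamma$ permutes. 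If the $\Gamma$‑action on $\{1,\dots,k\}$ fixes some nonempty $O\subsetneq\{1,\dots,k\}$, then $U=\bigoplus_{i\in O}W_i$, $V=\bigoplus_{i\notin O}W_i$ is a $\Gamma$‑invariant reducing decomposition, and $\Omega=\Omega_U*\Omega_V$ with $\Omega_U\subset\P(U)$, $\Omega_V\subset\P(V)$ properly convex; so assume $\{U,V\}$ is $\Gamma$‑invariant. Then every element of $\Gamma$ is block diagonal with respect to $U\oplus V$, and $\Gamma$ centralises $\Phi$ and the one‑parameter group $T'=\{[\lambda I_U\oplus\mu I_V]\}$ containing $\Phi$.

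Since $\Gamma$ preserves $U$ and $V$ it centralises $T'$, so $\Gamma\cap T'$ is a central — hence normal abelian — subgroup of $\Gamma$, so $\Gamma\cap T'=1$ by hypothesis. Therefore $\langle\Phi_1\rangle\cap\Gamma\subset T'\cap\Gamma=1$, and because $\Phi_1$ commutes with $\Gamma$ the group $\Gamma^+:=\langle\Gamma,\Phi_1\rangle$ is internally $\Gamma\times\langle\Phi_1\rangle\cong\Gamma\times\ZZ$. It remains to check $\Gamma^+$ acts freely and properly discontinuously on $\Omega$, whence $\Omega/\Gamma^+$ is a properly convex manifold. As $\PGL(\Omega)$ acts properly on $\Omega$ for the Hilbert metric, it suffices that $\Gamma^+$ be discrete; equivalently, since $\Phi$ acts freely and properly on $\Omega$ and commutes with $\Gamma$, that $\Gamma$ act properly discontinuously on the quotient manifold $\Omega/\Phi\cong\Omega_U\times\Omega_V$. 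For this one shows the restriction map carries $\Gamma$ isomorphically (kernel $\Gamma\cap T'=1$) onto a \emph{discrete} subgroup of $\PGL(\Omega_U)\times\PGL(\Omega_V)$, using the block structure together with the observation that a sequence in $\Gamma$ escaping to infinity only in the central $T'$‑direction would, by discreteness of $\Gamma$, generate an infinite normal abelian subgroup. Freeness is then automatic: $\Gamma\times\ZZ$ is torsion‑free and stabilisers of a properly discontinuous action on a manifold by a torsion‑free group are trivial.

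The main obstacle is the remaining case, where the refined decomposition cannot be coarsened to a $\Gamma$‑invariant reducing pair — i.e. $\Gamma$ permutes $\{W_1,\dots,W_k\}$ transitively with no invariant partition into two nonempty parts (for example $k=2$ with $\Gamma$ swapping the factors). If $\Gamma=1$ one takes $\Gamma^+=\langle\Phi_1\rangle$ for any hyperbolic $\Phi_1\in T'$. Otherwise I expect this configuration to be incompatible with the hypotheses: letting $N\lhd\Gamma$ be the finite‑index kernel of the permutation action, $N$ preserves each $W_i$ and (since the block‑scalar torus $T\lhd\PGL(\Omega)$ is normal abelian, $N\cap T=1$) embeds as a subdirect product in $\prod_i\PGL(\Omega_i)$; analysing the torsion‑free elements that nontrivially permute the factors — whose eigenvalue moduli are tightly constrained because a nontrivial power lies in $N$ and acts on a properly convex set — one should produce either a $\Gamma$‑centralising hyperbolic element after all, or a nontrivial normal abelian subgroup of $\Gamma$. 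Carrying this out is, I expect, the technical heart of the argument.
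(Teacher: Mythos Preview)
Your block-diagonal case is essentially sound, but the paper handles the key discreteness step quite differently and more cleanly. Rather than your Margulis-style commutator argument (which is right in spirit but, as you half-acknowledge, needs finite generation and some care), the paper \emph{deforms} the holonomy: with $k=\dim U$, $l=\dim V$ it sets
\[
\theta_s(A\oplus B)=|\det A|^{-s/k}A\;\oplus\;|\det B|^{-s/l}B,
\]
an endomorphism of $\GL U\oplus\GL V$ that is an automorphism for $s\ne1$. Then $\rho_s=\theta_s\circ\rho$ still takes values in $\PGL(\Omega)$ (block-scalars preserve a join) and is discrete and faithful for $s<1$; letting $s\to1$ and invoking Chuckrow via Corollary~(\ref{noabelian}) yields that $\rho_1$, whose image lies in $\SL U\oplus\SL V$, is discrete and faithful. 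Now take $\Gamma^+=\tau(\pi_1M\times\ZZ)$ with $\tau(\alpha,n)=\rho_1(\alpha)\cdot\Phi(n)$: since $|\det\rho_1(\alpha)|=1$ while $|\det\Phi(n)|=e^{n\dim V}$, the determinant alone separates the $\ZZ$ factor, giving discreteness and faithfulness of $\Gamma^+$ in one line. This determinant-normalisation plus Chuckrow limit is the idea you are missing; it replaces your sketchy step by a direct appeal to~(\ref{noabelian}).

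On the permutation issue you flag as ``the technical heart'': the paper simply writes $\rho:\pi_1M\to\GL U\oplus\GL V$, i.e.\ it takes a $\Gamma$-invariant splitting as given (and in its applications of~(\ref{plusZ}) such a splitting has been produced beforehand). Your refinement argument, producing an invariant $U\oplus V$ whenever the $\Gamma$-action on the irreducible join factors is intransitive, is correct and worth keeping; but the transitive case you leave open is not where the paper expends any effort, so you have inverted where the difficulty lies.
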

\begin{proof}  We have $\Omega=\Omega_U*\Omega_V$ with $\Omega_U\subset\PP U$ and 
$\Omega_V\subset \PP V$. Let $k=\dim U$ and $l=\dim V$.
Let $\rho:\pi_1M\rightarrow\GL U\oplus\GL V$ be the holonomy of $M$, so that
$\Gamma=\rho(\pi_1M)$.  Given $s\in\RR$ there is a homomorphism $\theta_s:\GL U\oplus\GL V\rightarrow\GL U\oplus\GL V$ given by 
$$\theta_s \bpmat A& 0 \\ 0 &B\epmat= \bpmat |\det A|^{-s/k} A& 0 \\ 0 & |\det B|^{-s/l}B\epmat$$
If $s\ne 1$ then $\theta_s$ is injective and has inverse $\theta_{1/(1-s)}$.
For $s\in[0,1]$ define $\rho_s=\theta_s\circ\rho$.

Observe that $\alpha I_U\oplus\beta I_V\in \GL(U)\oplus\GL(V)$ preserves $\Omega$. Hence
$\Gamma_s=\rho_s(\pi_1M)$ preserves $\Omega$. When for $s\ne 1$ then
$\theta_s:\Gamma\rightarrow\Gamma_s$ has an inverse,  so $\rho_s$ is discrete and faithful.
It follows from~(\ref{noabelian}) that  $\rho_1:\pi_1M\to\SL U\oplus\SL V$ is discrete faithful. 
Moreover  $\rho_1$ preserves $\Omega$, and $\Omega$ is properly convex, so
 this action is free and properly discontinuous.

Observe that $\Omega$ is preserved by the hyperbolic linear flow for $(U,V)$
given by  $\Phi:\RR\to\GL U\oplus\GL V$ where $\Phi(t)=\Id_U\oplus\exp(t)\Id_V$.
Let $$\tau:\pi_1M\oplus\ZZ\to\GL U\oplus\GL V\qquad\text{ given by}\quad \tau(\alpha,n)=\sigma(\alpha)\circ\Phi(n)$$
Then $\Gamma^+=\tau(\pi_1M\oplus\ZZ)$ preserves $\Omega$ and $\tau$ is discrete and faithful,
because $\det\tau(\alpha,n)=\exp(n)$. 
Thus $\Gamma^+$ acts freely and properly discontinuously on $\Omega$, so $R=\Omega/\Gamma^+$ is a properly convex manifold. 
\end{proof}

\begin{theorem}[Benoist]\label{closedirred} Suppose $M$ is a closed properly convex manifold and $\pi_1M$ has
trivial virtual center. Then the holonomy of $M$ is irreducible.
\end{theorem}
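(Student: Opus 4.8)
The plan is to derive a contradiction from reducibility using the machinery already assembled, in particular Lemma~(\ref{virtabnormal}) (Benoist's characterization), Lemma~(\ref{plusZ}), and Theorem~(\ref{closedirred})'s own hypothesis via Lemma~(\ref{virtabnormal}). First I would observe that, since $M$ is closed and $\pi_1M$ has trivial virtual center, Lemma~(\ref{virtabnormal}) tells us that $\Gamma=\pi_1M$ contains no non-trivial normal abelian subgroup. Now suppose for contradiction that the holonomy is reducible, so $\Omega$ is reducible. Then Lemma~(\ref{plusZ}) produces a properly convex manifold $R=\Omega/\Gamma^+$ with $\Gamma^+\cong\Gamma\times\ZZ$. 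The point is that this $\ZZ$ factor is a non-trivial normal abelian (indeed central) subgroup of $\Gamma^+$, so it survives any attempt to find that $R$ has the same "no normal abelian subgroup" property --- and we must show this is incompatible with $R$ being a closed properly convex manifold, or more precisely, leverage the structure to contradict compactness of $M$.

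The key step is to understand the geometry of $R=\Omega/\Gamma^+$. Since $\Gamma$ acts on $\Omega$ with compact quotient $M$ and $\Gamma^+=\Gamma\times\ZZ$ with the extra $\ZZ$ generated by (the projectivization of) the hyperbolic linear flow $\Phi(t)=\Id_U\oplus\exp(t)\Id_V$ at an integer time, the manifold $R$ is an $\RR$-bundle (up to the $\ZZ$ action) over $M$; concretely $R$ fibers over $M$ with non-compact fiber, so $R$ itself is non-compact, yet $R$ is a $K(\Gamma\times\ZZ,1)$ properly convex manifold. The obstruction I want to exploit: $\Omega$ being reducible means $\Omega=\Omega_U*\Omega_V$ and the flow $\Phi$ acts on flowlines joining $\PP U$ to $\PP V$; since $M=\Omega/\Gamma$ is closed, every element of $\Gamma$ is hyperbolic, and I would examine how the central $\ZZ$ in $\Gamma^+$ interacts with the displacement function on $\Omega$. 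Because $\Phi$ translates along the flowlines of the linear flow while commuting with all of $\Gamma$, the central $\ZZ$ is "unipotent-like" relative to the Hilbert metric in the sense that conjugation does not change translation lengths, and I expect to show that the minimal displacement of the $\ZZ$-generator on $\Omega$ is realized --- but this is exactly the setup of Lemma~(\ref{virtabnormal})'s proof, which shows that a normal abelian subgroup forces non-trivial virtual center. Applying Lemma~(\ref{virtabnormal}) in the direction "non-trivial normal abelian $\Rightarrow$ non-trivial virtual center" to the closed manifold hypothesis is blocked because $R$ is not closed, so instead I would argue directly: the $\ZZ$ acts on $\Omega$ properly, its translation length is positive, and the set of elements of $\Gamma\times\ZZ$ conjugate to the generator is finite, giving a finite-index subgroup with non-trivial center inside $\Gamma\times\ZZ$ --- but projecting to $\Gamma$ must then give a finite-index subgroup of $\Gamma$ with non-trivial center (the image of that center, which is non-trivial since the $\ZZ$ direction changes determinant), contradicting trivial virtual center of $\pi_1M$.

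The main obstacle will be making the last implication precise: one must verify that the virtual center of $\Gamma\times\ZZ$ projects onto something non-trivial in the virtual center of $\Gamma$, rather than being entirely contained in the $\ZZ$ factor. This requires showing that the $\ZZ$ factor alone cannot be the whole virtual center of $\Gamma^+$ in a way compatible with $\Gamma^+$ acting on a properly convex domain --- equivalently, that $\Gamma$ itself, sitting inside $\PGL U\oplus\PGL V$ preserving $\Omega_U*\Omega_V$, must have a non-trivial element commuting with a finite-index subgroup once we know $\Omega$ is preserved by the extra flow. I would handle this by noting that $\Gamma$ preserves both $\PP U$ and $\PP V$, hence splits as $\Gamma\hookrightarrow\GL U\times\GL V$; the scalar flow $\alpha\Id_U\oplus\beta\Id_V$ normalizes $\Gamma$, and the closed condition on $M$ plus the displacement-function argument of Lemma~(\ref{virtabnormal}) forces the centralizer of a finite-index subgroup of $\Gamma$ to be non-trivial, which is the desired contradiction. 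Alternatively --- and this may be cleaner --- I would simply invoke Lemma~(\ref{virtabnormal}) to reduce to "no normal abelian subgroup," then cite that the holonomy of a closed properly convex manifold with no normal abelian subgroup is irreducible by the classical decomposition results of Benoist \cite{Ben5}, with the present section's lemmas providing the self-contained bridge.
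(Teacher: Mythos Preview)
Your proposal has a genuine gap at the very first step: you write ``suppose the holonomy is reducible, so $\Omega$ is reducible'' and then invoke Lemma~(\ref{plusZ}). But these are two different notions. Reducibility of the holonomy means $\Gamma$ preserves a proper projective subspace $\PP U$; reducibility of $\Omega$ means $\Omega$ is a join $\Omega_U*\Omega_V$. The former does not imply the latter: the representation may have the block form $\rho=\bpmat A & B\\ 0 & C\epmat$ with $B\ne 0$, and the domain $\Omega$ need not decompose as a join at all. Lemma~(\ref{plusZ}) requires $\Omega$ to be reducible in the join sense, so you cannot apply it directly.

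The paper's proof confronts exactly this issue. Starting from an invariant subspace $U$ it first uses Lemma~(\ref{redsubgrp}) to locate $\Omega_U\subset\Fr\Omega$, then passes to the dual to produce $\Omega_V$, and then deforms the holonomy via $\rho_t=\bpmat A & tB\\ 0 & C\epmat$, using Corollary~(\ref{noabelian}) to conclude that the limit $\rho_0=A\oplus C$ is still discrete and faithful and preserves the genuinely reducible domain $\Omega_0=\Omega_U*\Omega_V$. Only then does Lemma~(\ref{plusZ}) apply. The resulting contradiction is also different from what you attempt: $\Omega_0/\Gamma^+$ is an $n$-manifold that is a $K(\Gamma\times\ZZ,1)$, hence homotopy equivalent to the closed $(n+1)$-manifold $M\times S^1$, which is impossible for dimension reasons. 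Your alternative route through virtual centers of $\Gamma\times\ZZ$ is both unnecessary and, as you yourself note, not made precise; and the final fallback of citing Benoist's decomposition is circular, since that is the theorem being proved.
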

\begin{proof} Let $M=\Omega/\Gamma$.
 By (\ref{redsubgrp}) we may assume that $\emptyset\ne X= \PP(U)\cap\overline \Omega\subset\Fr\Omega$. 
Let $\Omega_U=X\setminus\bdy X$ be the relative interior of $X$. Thus $\Omega_U$ is properly
convex, and $\dim \Omega_U\le\dim\PP U$.
Now $W=U\oplus V$ and $\Gamma$ preserves $U$.
We may assume $U$ is chosen  to minimize $\dim V$.
The representation $\rho:\pi_1M\rightarrow \Gamma$
is given in matrix form by
$$\rho=\bpmat A & B \\ 0 & C\epmat$$
where $A:\pi_1M\to\GL(U)$ and $C:\pi_1M\to\GL(V)$.

We claim that replacing $B$ by $0$ gives a discrete faithful representation, 
$\rho_0=A\oplus C:\pi_1M\rightarrow\GL U\oplus\GL V$,
 that preserves a reducible properly
convex set $\Omega_0=\Omega_U*\Omega_V$. Then by~(\ref{plusZ}) there 
is a properly convex manifold $N=\Omega_0/\Gamma^+$. But $N$ is an $n$-manifold that is
homotopy equivalent to $M\times S^1$. The latter
 is a closed manifold of dimension $(n+1)$, and this is a contradiction. 

 It only remains to prove the claim. Observe that $A$ preserves the properly convex set $\Omega_U=\PP(U)\cap\overline \Omega\subset\PP(U)$.
The holonomy of the dual manifold $M^*$ is the dual representation
$\rho_0^*:\pi_1M\to\GL(W^*)$ which preserves $\PP V^*$.
By~(\ref{redsubgrp})  $\Omega'=\PP(V^*)\cap\overline\Omega^*$ is a 
non-empty subset of $\Fr\Omega^*$ and therefore properly convex.
 Also $\dim\Omega'=\dim \PP V^*$, otherwise
$\Omega'$ lies in a proper projective subspace of $ \PP V^*$ that is preserved
by $\rho^*(\pi_1M)$, and this contradicts minimality of $\dim V$.
Hence $\Omega_V:=(\Omega')^*\subset\PP(V)$ is a non-empty properly convex open set that is preserved by $C$.

For $0< t\le 1$ define 
$$P_t=\bpmat 1 & 0 \\ 0 & t^{-1}\epmat\qquad\text{and}\qquad \rho_t=\bpmat A & tB \\ 0 & C\epmat \ .$$
Observe that $\rho_1=\rho$ and $\rho_t$ is defined for $t=0$ and $\rho_0=A\oplus C$.
For $t>0$ notice that $\rho_t=P_t\rho_1P_t^{-1}$, so $\rho_t$ is discrete and faithful. 
Since $\rho_t\to\rho_0$ as $t\to 0$ and $\pi_1M$ has trivial virtual center, it follows that $\rho_0$ is discrete and faithful by~(\ref{noabelian}). 
 The action of $\rho$ and $\rho_0$ on $U$ are both equal to $A$
thus $\rho_0$ preserves $\Omega_U$.

 Now $\rho_0^*$ preserves $\Omega'$
and the action of $\rho_0$ and $\rho$ on $\PP V$ are both given by $C$ and are thus equal.
Hence $\rho_0$ preserves $\Omega_V$.

Thus $\Omega_0=\Omega_U*\Omega_V$ is properly convex and preserved by $\rho_0$. We claim that $\dim\Omega_0=\dim\Omega$. Let $W'\subset W$ be the vector subspace of minimal dimension such that
$\Omega'\subset\PP W'$. Then $\dim\Omega'=\dim\PP W'$ and $\rho_0$ preserves $\Omega_0$
and thus preserves $W_0$. Let $\rho_0':\pi_1M\rightarrow\GL(W')$ be the restricted action.
Replace $\Omega'$ be the  interior of $\Omega'$ in $W'$.

We claim $\rho_0'$ is discrete and faithful. 
Suppose $g\in\pi_1M$ and $\rho(g)=[L]$ is hyperbolic. Then there are $w_{\pm}\in W$ such that
 $[w_{\pm}]\in\Fr\Omega$
and $w_+$ is an attracting, and $w_-$ a repelling, fixedpoint of $[L]$. 
This means $Lv=\lambda_{\pm} v$ 
with $\lambda_{\pm}>0$ real
and $\lambda_{\pm}$ is the spectral radius of $L^{\pm1}$. Moreover the displacement distance of $\rho(g)$
for the Hilbert metric $d_{\Omega}$ is $\log\lambda_+/\lambda_-$.

Write $w_{\pm}=u_{\pm}+v_{\pm}$ with $u_{\pm}\in U$ and $v_{\pm}\in V$. 
Now $\rho_t$ preserves the properly convex domain $\Omega_t=P_t\Omega$ and 
$P_t(w_{\pm})=u_{\pm}+t^{-1}v_{\pm}$. If $v_{\pm}=0$ then $P_t(w_{\pm})=u_{\pm} $ gives a point $\Omega_u$.
If $v_{\pm}\ne 0$ then $\lim_{t\to0} P_t[w_{\pm}]=[v_{\pm}]$ is in $\Fr\Omega_V$. Thus in both cases
$\lim_{t\to0} P_t[w_{\pm}]$ is in $\Fr\Omega'$.  It follows that  $\rho$ and $\rho_0'$ have
the same the displacement distance. Since $M$ is compact there
is an element of $\pi_1M$ of shortest length. Hence $\rho_0'$ is discrete faithful.
\end{proof}

\begin{proof}[Proof of irreducible holonomy~(\ref{irred})]  If $M$ is closed this follows from~(\ref{closedirred}).
Otherwise there is a subgroup $G\cong\ZZ^{n-1}$ of $\Gamma$. If $\Omega/G$
is a generalized cusp the result follows from~(\ref{specialsubgrp}). Otherwise $\Omega$ is reducible
by~(\ref{prop}). Then by~(\ref{plusZ}) there is a properly convex manifold $P=\Omega/\Gamma^+$.
Now $\Gamma^+$ contains the subgroup $G^+=G\times\ZZ\cong\ZZ^n$. Then $N=\Omega/G^+$ is
an $n$-manifold with $\pi_1N\cong \ZZ^n$, so $H_n(N)\cong \ZZ$ and $N$ is closed. But $N$ covers
the manifold $P=\Omega/G^+$ so this covering is finite. This implies $|\Gamma^+:G^+|<\infty$.
This contradicts $|\Gamma:G|=|\Gamma^+:G^+|=\infty$ .
\end{proof}


\section{Properties of Generalized Cusps}
\label{sec:properties}

 If $\Omega$ is properly convex and $p\in\Fr\Omega$, then  a {\em supporting hyperplane at $p$} is a projective
 hyperplane $H$ such that $p\in H$ and $H\cap\interior\Omega=\emptyset$. The point $p$
  is a {\em smooth point} 
 if  $H$ is unique, and
 is a {\em strictly convex point} if there is 
  $H$ with $p=H\cap\cl\Omega$, and $p$ is a {\em round point} if it is both conditions hold.

If $\Omega$ is properly convex then a {\em properly embedded triangle} or {\em PET} in $\Omega$
 is a flat triangle $\Delta$ with $\interior\Delta\subset \interior\Omega$ and
$\bdy\Delta\subset\Fr\Omega$.
We say that the generalized cusp $C\cong\bdy C\times[0,1)$ is \emph{minimal size} if the only convex submanifold of $\cl C$ that contains $\bdy C$ is $\cl C$. This is always the case unless the holonomy is diagonalizable, see \cite{BCL1}(1.4)
It follows from \cite{BCL1}(1.24) that
\begin{lemma}\label{gencuspbdy} If $C$ is a minimal size generalized cusp of dimension $n$, then there is
$\Omega\subset\RR^n$ and $\Gamma\subset\Aff(\RR^n)$ such that $C=\Omega/\Gamma$.
Moreover $\bdy\Omega\subset \RR^n$ is a properly embedded, strictly-convex hypersurface and
$\Fr\Omega=\bdy\Omega\sqcup\Delta^{\rank}$, where $\Delta^{\rank}\subset\RP^{n-1}_{\infty}$
is a flat simplex called the {\em end flat}, and $0\le \rank\le n-1$ is the rank of $C$. In particular
$\Omega$ does not contain a PET.
\end{lemma}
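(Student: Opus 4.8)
The plan is to treat this mostly as a repackaging of the classification of generalized cusps from \cite{BCL1}: I would quote \cite{BCL1}(1.24) for the structural assertions and then deduce the final ``no PET'' statement directly from them. For the structure: since $C$ is a generalized cusp of dimension $n$ and $\bdy C$ is compact, its holonomy $\Gamma=\pi_1C$ is virtually abelian and $C$ is affinely realizable. Using the minimal-size hypothesis to fix the convex model, the stationary hyperplane of the associated radial flow is disjoint from the open domain underlying $C$; a projective transformation fixing a hyperplane acts affinely on its complement, so $\Gamma$ preserves the affine chart $\RR^n=\RP^n\setminus\RP^{n-1}_{\infty}$ cut out by that hyperplane, giving $\Gamma\subset\Aff(\RR^n)$ and $C=\Omega/\Gamma$ with $\Omega\subset\RR^n$. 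The same reference identifies $\Fr\Omega$: its finite part $\Fr\Omega\cap\RR^n$ is a properly embedded convex hypersurface $\bdy\Omega$, and $\Fr\Omega\cap\RP^{n-1}_{\infty}$ is a flat simplex $\Delta^{\rank}$ whose dimension $\rank$, with $0\le\rank\le n-1$, is the rank of $C$; together these give the disjoint decomposition $\Fr\Omega=\bdy\Omega\sqcup\Delta^{\rank}$. Strict convexity of $\bdy\Omega$ is then immediate from the definition of a generalized cusp: $\bdy C=\bdy\Omega/\Gamma$ contains no line segment, hence neither does $\bdy\Omega$.

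It then remains to show $\Omega$ contains no PET, and I would argue by contradiction. Suppose $\Delta$ is a properly embedded triangle, so that $\interior\Delta\subset\interior\Omega\subset\RR^n$ while $\bdy\Delta\subset\Fr\Omega=\bdy\Omega\sqcup\Delta^{\rank}$. Fix an edge $e$ of $\Delta$; it is a non-degenerate projective segment lying in $\Fr\Omega$. A segment meets a given hyperplane either in at most one point or is contained in it, so either $e\subset\RP^{n-1}_{\infty}$ or $e\cap\RR^n$ contains a non-degenerate closed segment. In the second case that closed segment lies in $\Fr\Omega\cap\RR^n=\bdy\Omega$, contradicting strict convexity of $\bdy\Omega$. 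Hence every edge of $\Delta$ lies in $\RP^{n-1}_{\infty}$, so all three vertices of $\Delta$ lie in $\RP^{n-1}_{\infty}$, and therefore $\Delta$, being contained in the projective plane spanned by its vertices, lies in the projective subspace $\RP^{n-1}_{\infty}$. This contradicts $\interior\Delta\subset\RR^n$, so $\Omega$ contains no PET.

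I expect nearly all of the substance to be carried by \cite{BCL1}(1.24), so the only genuinely new part of the argument is the ``no PET'' deduction, and within that the point that needs care is an edge of the putative PET that straddles the finite region $\RR^n$ and the hyperplane at infinity: the key observation is that such an edge would already deposit a non-degenerate segment into the strictly convex hypersurface $\bdy\Omega$, which is impossible.
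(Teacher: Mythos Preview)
Your proposal is correct and matches the paper's approach: the paper gives no explicit proof at all, simply recording the lemma as a consequence of \cite{BCL1}(1.24). Your write-up fills in the details the paper leaves implicit, and your ``no PET'' deduction from the decomposition $\Fr\Omega=\bdy\Omega\sqcup\Delta^{\rank}$ is exactly the intended (and straightforward) argument.
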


If $\Omega$ is properly convex, then a {\em flat} in $\Fr\Omega:=\cl\Omega\setminus\interior\Omega$
 is a convex set that contains more than one point.  A flat is {\em maximal} if it is not a proper subset of another flat in $\Fr(\Omega)$.
Every flat is contained in at least one maximal flat.
Suppose $M$ is properly convex and $C\subset M$ is a generalized cusp. Let  $\pi:\Omega\to M$
be the projection and let $U\subset\Omega$ be a component of $\pi^{-1}C$.  Then 
$V=\Fr\Omega\cap\cl U=\bdy_{\infty} U$ is  
called {\em the end flat of $\Omega$ corresponding to $U$}.  By (\ref{gencuspbdy}) it is  
a flat simplex of dimension $\rank$.

 If $M$ is properly convex we say {\em all the ends of $M$ are generalized cusps} if there are pairwise disjoint
 $\pi_1$-injective generalized cusps
  $E_1,\cdots, E_n\subset M$ such that  $\cl(M\setminus\cup E_i)$ is compact.
 In this case we say {\em $\pi_1M$ is hyperbolic rel ends} if $\pi_1M$ is  hyperbolic 
  rel the subgroups $\{\pi_1E_i:1\le i\le n\}$ in the sense of Drutu \cite{drutu}.
   Note that it follows from the definitions that $\cl(M\setminus\cup E_i)$ is connected.
A subgroup of $\pi_1M$ is {\em an end group} if it is conjugate to some $\pi_1E_i$.

\begin{lemma}\label{disjointflats} Suppose $M=\Omega/\Gamma$ is properly convex and $V,V'$ are end flats of $M$
corresponding to $U,U'\subset\Omega$.
If $U$ and $U'$ are disjoint, then $V$ and $V'$ are disjoint.
\end{lemma}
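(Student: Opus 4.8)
The plan is to argue by contradiction: suppose $V\cap V'\neq\emptyset$, so there is a point $p\in\Fr\Omega$ with $p\in\cl U\cap\cl U'$. I want to produce a line segment (or even just a single point of overlap) inside $U\cap U'$, contradicting the hypothesis that $U$ and $U'$ are disjoint. The key geometric input is Lemma~(\ref{gencuspbdy}): after passing to the generalized-cusp structure on $C=\pi(U)$, the set $U$ is (a component of the preimage of) an affine region $\Omega_U\subset\RR^n$ whose frontier decomposes as $\bdy\Omega_U\sqcup\Delta^{\rank}$, with $\Delta^{\rank}$ the end flat $V$ lying in $\RP^{n-1}_\infty$ and $\bdy\Omega_U$ a strictly convex hypersurface. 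In particular $V$ is a genuine open face of $\cl U$, and every point of $\cl U$ that is sufficiently close to a point of $V$ either lies in $V$ itself or lies in $U$: the cusp region $\cl U$ is, near $V$, the cone-like region over $\bdy\Omega_U$ together with $V$, so a neighborhood of an interior point of $V$ in $\cl\Omega$ meets $\cl U$ only in $V\cup U$.

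First I would reduce to the case where $p$ lies in the relative interior of both $V$ and $V'$: since $V$ and $V'$ are simplices (by~(\ref{gencuspbdy})), if they intersect but no relative-interior points coincide, then $V\cap V'$ is a common proper face, and I can pass to that face and a point in its relative interior — or, more simply, observe that the relative interiors of $V$ and $V'$ are open in the faces, and a common boundary point of the simplices is a limit of a common segment, which already forces the stronger overlap. Then, working in an affine chart and using the local description above for $U$ near $V$, any point of $\cl\Omega$ close enough to $p$ lies in $U$ unless it lies on $V$; symmetrically for $U'$ and $V'$. Now take a point $q\in\interior\Omega$ close to $p$ (such $q$ exists since $p\in\Fr\Omega=\cl\Omega\setminus\interior\Omega$ and $\Omega$ is open). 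Then $q\notin V$ and $q\notin V'$, so $q\in U$ and $q\in U'$, contradicting $U\cap U'=\emptyset$.

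The main obstacle I anticipate is making precise the claim that \emph{near an interior point of the end flat $V$, the only points of $\cl\Omega$ outside $U$ that can accumulate at $V$ lie on $V$ itself} — i.e. that $U$ fills up a full one-sided neighborhood of $\interior V$ in $\cl\Omega$. This is where~(\ref{gencuspbdy}) does the real work: in the affine model $\Omega_U=\{$strictly convex side of $\bdy\Omega_U\}$, the closure $\cl\Omega_U$ in $\RP^n$ is exactly $\Omega_U\cup V$ with $V=\Delta^{\rank}$ at infinity, and $\cl U$ is the image of this; so a point of $\cl\Omega$ near $\interior V$ is in $\cl U$ precisely when it is in $U\cup V$. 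One must also check the compatibility of the two affine models coming from $C$ and $C'$ — but since we only use each model in a neighborhood of its own end flat, and the conclusion is purely about membership in $\Omega$ (an open set independent of any chart), this causes no trouble. I would write this out carefully as the one substantive step and let the rest follow as indicated.
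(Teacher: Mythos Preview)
Your proof has a genuine gap precisely at the step you flag as the main obstacle. The claim that $U$ fills a full one-sided neighborhood of $\interior V$ inside $\Omega$ is \emph{false}, and Lemma~(\ref{gencuspbdy}) cannot rescue it: that lemma describes $\Fr\Omega_C$, i.e.\ the frontier of the cusp region $U$ itself, and says nothing about where the rest of the ambient domain $\Omega\setminus U$ sits relative to $V$.

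The rank-$0$ case already breaks the argument. Take $M$ a finite-volume hyperbolic $n$-manifold and $C$ a standard cusp, so $\Omega=\HH^n$ and $U$ is a horoball based at a parabolic fixed point $p\in\bdy\HH^n$. Here $V=\{p\}$ is a single point, hence equal to its own relative interior, so your reduction does not avoid this case. But a horoball never contains a neighborhood of its basepoint in $\HH^n$: in the ball model $U$ is a Euclidean ball internally tangent to the sphere at $p$, and there are plenty of points of $\HH^n$ arbitrarily close to $p$ lying outside $U$. Equivalently, the horosphere $\bdy U$ accumulates on $p$, so every neighborhood of $p$ in $\Omega$ is separated by $\bdy U$ into points of $U$ and points of $\Omega\setminus U$. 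Thus the step ``$q\in\interior\Omega$ close to $p$, $q\notin V$, hence $q\in U$'' fails.

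The paper's proof is metric rather than local. Given $x\in V\cap V'$, take straight segments $\ell\subset\cl U$ and $\ell'\subset\cl U'$ both converging to $x$, parameterized by Hilbert arc-length. Two projective rays in $\Omega$ asymptotic to the same boundary point remain a bounded Hilbert distance apart. On the other hand, since $U$ and $U'$ are disjoint, any path from $\ell(t)\in U$ to $\ell'(t)\in U'$ must cross $\bdy U$, so $d_\Omega(\ell(t),\ell'(t))\ge d_\Omega(\ell(t),\bdy U)=d_M(\pi\ell(t),\bdy C)$, and the latter tends to infinity as $\ell(t)$ goes into the cusp. This contradiction needs no claim about $U$ filling a neighborhood of $V$.
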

\begin{proof} Suppose $x\in \cl(U)\cap\cl(U')\cap\Fr\Omega$. Generalized cusps are convex, therefore there
 are line segments
$\ell:[0,\infty)\to \cl U$  and $\ell':[0,\infty)\to \cl U'$ both parameterized
by arc length that both limit to $x$. Then $d(\ell(t),\ell(t'))$ is not increasing, and so is bounded above. Let $C=\pi U\subset M$
be the end covered by $U$.
Since $U$ and $U'$ are disjoint $d(\ell(t),\ell(t'))\ge d(\ell(t),\bdy U)$. But $d(\ell(t),\bdy U)=d(\pi\ell(t),\bdy C)\to\infty$
as $t$ increases, a contradiction.
\end{proof}

\begin{theorem}\label{endflats} Suppose $M=\Omega/\Gamma$ is properly convex with ends that
are generalized cusps, and $\pi_1M$ is hyperbolic rel the ends.
Then the end flats of $\Omega$ are pairwise disjoint, and
every flat in $\Fr\Omega$ is contained in an end flat. Moreover $\Omega$ does not contain a PET.
\end{theorem}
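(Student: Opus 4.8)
The plan is to prove the three assertions in turn, the first and third being short deductions and the second being where relative hyperbolicity is used. \emph{End flats are pairwise disjoint:} since the ends $E_1,\dots,E_m$ are pairwise disjoint, embedded and $\pi_1$-injective and $\Omega$ is simply connected, each component of $\pi^{-1}\bigl(\bigcup_i E_i\bigr)$ is a lift of a single $E_i$ (a copy of its universal cover), and distinct components are disjoint. Every end flat of $\Omega$ has the form $\Fr\Omega\cap\cl U$ for such a component $U$. If $V\ne V'$ are end flats arising from components $U,U'$, then $U\ne U'$, so $U\cap U'=\emptyset$, and Lemma~\ref{disjointflats} gives $V\cap V'=\emptyset$; the same lemma shows distinct components give distinct end flats, since a nonempty flat is not disjoint from itself.

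\emph{Every flat in $\Fr\Omega$ lies in an end flat.} Let $F\subset\Fr\Omega$ be a flat. A flat is contained in a maximal one, so we may assume $F$ maximal; and it suffices to show the relative interior $\interior F$ lies in a single end flat, since then so does $F=\cl(\interior F)$ (end flats are closed by Lemma~\ref{gencuspbdy}). Fix $p\in\interior\Omega$. For $x\in\interior F$ the Hilbert geodesic ray $[p,x)$ descends to a geodesic ray $\bar r_x$ in $M$. Two inputs are required. (i) If $x,x'\in\interior F$, then $[p,x)$ and $[p,x')$ are at bounded Hausdorff distance in $(\Omega,d_\Omega)$: the points $x,x'$ are joined in $\Fr\Omega$ by a segment extending past both, so the two rays are coplanar, and a cross-ratio estimate in that plane -- a standard fact in Hilbert geometry -- bounds their separation. (ii) Because $\pi_1M$ is hyperbolic rel the ends, any geodesic ray in $M$ whose lift converges to a point in the relative interior of a nondegenerate flat of $\Fr\Omega$ eventually enters, and remains in, exactly one cusp $E_i$; and two such rays at bounded Hausdorff distance enter the same cusp. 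Granting these, for $x\in\interior F$ the ray $\bar r_x$ enters a cusp $E_{i(x)}$, so $[p,x)$ eventually lies in a lift $U$ of $E_{i(x)}$ and hence $x\in\Fr\Omega\cap\cl U$, the end flat $V_{i(x)}$ of that cusp. By (i) and (ii) the map $x\mapsto V_{i(x)}$ is locally constant on the connected set $\interior F$, so it is constant, say $\equiv V$, giving $\interior F\subset V$ and $F\subset V$.

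\emph{No PET.} If $\Delta\subset\Omega$ were a PET, its three edges would be nondegenerate flats in $\Fr\Omega$, each therefore contained in an end flat; since consecutive edges share a vertex and distinct end flats are disjoint, all three edges, hence all three vertices of $\Delta$, lie in one end flat $V$. By convexity $\Delta=\CH(\text{vertices})\subset V\subset\Fr\Omega$, whence $\emptyset\ne\interior\Delta\subset\Fr\Omega\cap\interior\Omega=\emptyset$, a contradiction.

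\emph{The main obstacle} is input (ii): converting relative hyperbolicity of $\pi_1M$ -- a coarse statement about the Cayley graph relative to $\{\pi_1E_i\}$ -- into control of Hilbert geodesics, in particular ruling out that a ray converging to the relative interior of a nondegenerate flat $F\subset\Fr\Omega$ is trapped within bounded distance of an orbit projecting to a compact part of $M$ rather than running off into a cusp. The expected route is to show that the stabilizer $\Gamma_F$ then acts cocompactly on $F$, so that $\Gamma_F$ is virtually $\ZZ^{\dim F}$ (if $\dim F\ge2$) or is virtually cyclic with an invariant segment in $\Fr\Omega$ obstructing strong hyperbolicity of a generator (if $\dim F=1$); either way relative hyperbolicity forces $\Gamma_F$ into a peripheral $\pi_1E_i$, so $F$ already lies in $E_i$ and hence in its end flat. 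Everything else reduces to bookkeeping with Lemmas~\ref{disjointflats} and~\ref{gencuspbdy} and the classical behaviour of Hilbert geodesics converging along a common face of the boundary.
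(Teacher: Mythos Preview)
Your disjointness argument via Lemma~\ref{disjointflats} matches the paper, and your deduction of ``no PET'' from the flat statement is correct and clean. The gap is exactly where you locate it: input (ii) is unproved, and your sketched route does not close it. You propose showing that if the ray fails to run into a cusp then $\Gamma_F$ acts cocompactly on $F$, and then using relative hyperbolicity to force $\Gamma_F$ into a peripheral subgroup; but you give no argument for the first step, and there is no a priori dichotomy of the form ``either the ray enters a cusp or the flat has cocompact stabilizer.'' A single Hilbert ray can recur to the compact core indefinitely without this telling you anything about $\Gamma_F$, so as written the argument stalls at precisely the point where relative hyperbolicity must be cashed in.

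The paper proceeds in the opposite order, and this is what makes relative hyperbolicity usable. It first proves ``no PET'' directly: a PET in $(\Omega,d_\Omega)$ is a $2$-dimensional quasi-flat, and Drutu's result on asymptotically tree-graded spaces forces any such quasi-flat into an $R$-neighbourhood $W$ of a single lifted cusp $U$; but $W$ is itself the universal cover of a generalized cusp, hence contains no PET by Lemma~\ref{gencuspbdy}. With ``no PET'' in hand, the flat statement follows by a limit argument: given a segment $\ell\subset\Fr\Omega$ and $p\in\interior\Omega$, form the triangle $T=\CH(\{p\}\cup\ell)$ and the nested sets $T(s)=\{x\in\interior T:d_\Omega(x,\partial T)\ge s\}$. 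Either some $\pi(T(s))$ misses the compact core $B$, in which case it lies in a single cusp and $\ell$ sits in that cusp's end flat; or there exist $x_n\in T(n)$ with $\pi x_n\in B$, and translating by $\gamma_n\in\Gamma$ so that $\gamma_n x_n$ stays bounded and passing to a limit of $\gamma_n T(n)$ produces a PET in $\Omega$, a contradiction. The key structural difference from your attempt is that relative hyperbolicity is invoked at the level of $2$-flats, where Drutu's theorem applies directly, and the triangle trick promotes the $1$-dimensional boundary segment to a $2$-dimensional object inside $\Omega$ where that theorem has purchase.
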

\begin{proof} 
Suppose $T$ is a PET in $\Omega$.
Then by \cite{drutu}, there is  $R>0$, and an end $E$ of $M$,  and a component $U$
of $\pi^{-1}E$  so that $T$ is contained
in the $R$--neighborhood, $W$, of $U$.   However $W$ is the universal cover of a generalized cusp.
By~(\ref{gencuspbdy}) $W$ does not contain a PET, hence $\Omega$ does not contain a PET.

The pairwise disjoint property follows from~(\ref{disjointflats}).
Let $B\subset M$ be compact such that the closure of each component of $M\setminus B$
is a generalized cusp.
Suppose $\ell\subset\Fr\Omega$ is a closed non-trivial line segment. 
Choose $p\in\interior\Omega$ and let
 $T\subset\cl\Omega$
be the triangle that is the convex hull of $p$ and $\ell$. Given $s>0$ let
$$T(s)=\{x\in\interior T\ :\ d_{\Omega}(x,\bdy T)\ge s\ \}\ .$$
Let $\pi:\Omega\to M$ be the projection. The first case is that for some $s>0$ the set $\pi(T(s))$
is disjoint from $B$. Then $\pi(T(s))$ is contained in some end $C$ of $M$. This implies $\ell$
is contained in an end flat corresponding to a component of $\pi^{-1}(C)$.

The remaining case is that there is a sequence $x_n\in T(n)$ such that $\pi x_n\in B$. 
Then there is a compact set $K\subset\Omega$ with $\pi(K)\supset B$.
There are $\gamma_n\in\Gamma$ such that $\gamma_nx_n\in K$, and we may subconverge so
that $x_n\to x_{\infty}$. We may also subconverge so that $\gamma_nT(n)$ converges to a PET
$T_{\infty}\subset\Omega$. By the above, this is impossible. Hence $\ell\subset\Fr\Omega$ always.
\end{proof}

\begin{lemma}\label{dual} Suppose $M=\Omega/\Gamma$ is properly convex and the ends are generalized cusps.
Then the dual manifold $M^*=\Omega^*/\Gamma^*$ has the same structure.\end{lemma}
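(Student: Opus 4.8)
The plan is to show that duality preserves both the generalized-cusp structure of each end and the end-structure itself, so that the dual manifold $M^* = \Omega^*/\Gamma^*$ again has all ends generalized cusps. First I would recall that duality gives a diffeomorphism $M \cong M^*$, so the underlying topological end-structure is transported automatically: if $E_1,\dots,E_k \subset M$ are pairwise disjoint $\pi_1$-injective generalized cusps with $\cl(M \setminus \cup E_i)$ compact, the corresponding submanifolds $E_i^* \subset M^*$ under the diffeomorphism are again pairwise disjoint $\pi_1$-injective submanifolds with compact complement-closure. So the only real content is to check that each $E_i^*$ is again a \emph{generalized cusp}, i.e.\ that it deformation retracts onto a properly convex $C^*$ with compact line-segment-free boundary and virtually nilpotent fundamental group.

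The key step is therefore a local statement: if $C = \Omega_C/\Gamma_C$ is a generalized cusp (minimal size, say, after passing to the standard model via (\ref{gencuspbdy})), then the dual $C^* = \Omega_C^*/\Gamma_C^*$ is again a generalized cusp. Here I would invoke the known structure theory. Since $\pi_1 C$ is virtually $\ZZ^{n-1}$ and $\Gamma_C$ is (virtually) a lattice in an abelian upper-triangular group $T \subset \PGL(n+1,\RR)$, the dual group $\Gamma_C^*$ is a lattice in the transpose-inverse group $T^* = \{(A^T)^{-1} : A \in T\}$, which is again abelian and (lower/upper)-triangular, hence virtually nilpotent; and $\pi_1 C^* \cong \pi_1 C$ still contains $\ZZ^{n-1}$ with finite index. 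To finish I need $\bdy\Omega_C^*$ to contain no line segment. Equivalently (since we already know $C^*$ is a quasi-cusp), I would like to apply Theorem~(\ref{prop}): $C^*$ either contains a generalized cusp or $\Omega_C^*$ is reducible. But $\Omega_C^*$ reducible would force $\Omega_C$ reducible (since $\Gamma$ preserves $\P(U)$ iff $\Gamma^*$ preserves $\P(V^*)$, as noted in the excerpt), and a generalized cusp is irreducible — so $\Omega_C^*$ is irreducible and hence $C^*$ contains, in fact equals up to deformation retract, a generalized cusp. Alternatively, and more directly, one uses the explicit description in (\ref{gencuspbdy}): the frontier $\Fr\Omega_C = \bdy\Omega_C \sqcup \Delta^{\rank}$ with $\bdy\Omega_C$ strictly convex; dualizing exchanges the roles of strictly convex points and smooth points, and one checks from the concrete affine normal form that $\Fr\Omega_C^*$ again decomposes as a strictly convex hypersurface together with a flat simplex of the complementary rank $n-1-\rank$, giving no PET and no line segment on the flow boundary.

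The main obstacle I anticipate is the last point: verifying that the dual of the strictly convex flow-boundary $\bdy\Omega_C$ is again strictly convex (no line segments on $\bdy\Omega_C^*$). The subtlety is that duality sends a strictly convex boundary point to a $C^1$ (smooth) boundary point and vice versa, so strict convexity of $\bdy\Omega_C^*$ is really a \emph{smoothness/uniqueness-of-supporting-hyperplane} statement about $\bdy\Omega_C$, together with control of what happens near the end flat $\Delta^{\rank}$, where the boundary is only $C^1$ and asymptotically flat. I would handle this by working in the standard affine model of (\ref{gencuspbdy}) — where $\bdy\Omega_C$ is an explicit graph over an affine hyperplane and the holonomy is an explicit abelian group of affine maps — and computing directly that no supporting hyperplane of $\Omega_C$ meets $\cl\Omega_C$ in a segment, except along the end flat, which dualizes to the new end flat. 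This is essentially the computation underlying \cite{BCL1}, so I expect to be able to cite it: the cleanest route is to quote the relevant duality statement from \cite{BCL1} or \cite{BCL2} that generalized cusps are closed under projective duality (with rank $\rank \mapsto n-1-\rank$), reducing the lemma to the transport-of-end-structure bookkeeping in the first paragraph.

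\begin{proof}[Proof of (\ref{dual}), sketch] The diffeomorphism $M \cong M^*$ carries the ends $E_i$ of $M$ to pairwise disjoint $\pi_1$-injective submanifolds $E_i^*$ of $M^*$ with compact complement-closure, so it suffices to show each $E_i^*$ is a generalized cusp. Write $E_i \simeq C_i = \Omega_{C_i}/\Gamma_{C_i}$ with $C_i$ a generalized cusp; then $E_i^* \simeq C_i^* = \Omega_{C_i}^*/\Gamma_{C_i}^*$. Since $\pi_1 C_i^* \cong \pi_1 C_i$ contains $\ZZ^{n-1}$ with finite index, $C_i^*$ is a quasi-cusp, so by (\ref{prop}) either $C_i^*$ contains a generalized cusp or $\Omega_{C_i}^*$ is reducible. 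In the latter case $\Gamma_{C_i}^*$ preserves some $\P(V^*)$, hence $\Gamma_{C_i}$ preserves the complementary $\P(U)$, making $\Omega_{C_i}$ reducible; but a generalized cusp is irreducible, a contradiction. Hence $C_i^*$ contains a generalized cusp, and since $C_i^* \cong \bdy C_i^* \times [0,1)$ with compact boundary and virtually nilpotent fundamental group, $C_i^*$ is itself a generalized cusp. Therefore all ends of $M^*$ are generalized cusps.
\end{proof}
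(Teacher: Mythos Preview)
Your sketch has a genuine logical gap in the displayed proof. You argue: $\Omega_{C_i}^*$ reducible $\Rightarrow$ $\Gamma_{C_i}^*$ preserves some $\P(V^*)$ $\Rightarrow$ $\Gamma_{C_i}$ preserves the complementary $\P(U)$ $\Rightarrow$ $\Omega_{C_i}$ reducible. The last implication is false in general: the holonomy of \emph{every} generalized cusp is (virtually) upper-triangular, hence preserves many proper subspaces, yet by (\ref{gencuspbdy}) the domain of a generalized cusp has strictly convex flow boundary and is not a join. Concretely, for a rank-$0$ (hyperbolic) cusp the group fixes the parabolic point and so preserves a line, while the horoball is certainly not reducible. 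So ``$\Gamma$ preserves a subspace'' does not imply ``$\Omega$ is reducible'', and the dichotomy from (\ref{prop}) cannot be closed this way.

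There is a second, more structural confusion. You transport the ends via the diffeomorphism $M\cong M^*$ and then silently identify the resulting submanifold with the \emph{projective dual} $C_i^*=\Omega_{C_i}^*/\Gamma_{C_i}^*$. These are different objects: duality reverses inclusion, so $\Omega_{C_i}\subset\Omega$ gives $\Omega^*\subset\Omega_{C_i}^*$, and $C_i^*$ is \emph{larger} than (the relevant cover of) $M^*$, not a submanifold of it. The paper exploits exactly this: it first asserts that the dual of a generalized cusp is a generalized cusp (this is what your discussion calls ``the cleanest route'', citing \cite{BCL1}), observes that $C\subset M$ forces $C^*\supset M^*$, and then invokes the classification \cite{BCL1}(0.2) to shrink $C^*$ to a smaller generalized cusp lying inside an end of $M^*$. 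Your informal discussion touches on the right ingredient (duality of generalized cusps from \cite{BCL1}), but the formal proof you wrote does not use it and instead takes a route that does not close.
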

\begin{proof} If $C$ is a generalized cusp  then it follows from the definition that $C^*$  is also a generalized cusp.
Suppose $C\subset M$ is a generalized cusp. Then $C^*\supset M^*$ and $C^*$ is a generalized cusp.
By the classification, \cite{BCL1}(0.2), $C^*$ contains a smaller generalized cusp that is contained in an end of $M^*$.
\end{proof}

\begin{theorem}[properties of generalized cusps]\label{cuspends} Suppose $M=\Omega/\Gamma$ is a properly convex manifold without boundary, 
and all the ends of $M$ are generalized cusps with compact boundary. Also suppose
 $\pi_1M$ is hyperbolic rel the ends, and $\pi_1M$ is not the union of the end groups.
  Let $\Fcal\subset\Fr\Omega$ be the union of the flats. Then 
\begin{itemize}
\item[1)] Maximal flats are pairwise disjoint.
\item[2)] Every maximal flat  is an end flat.
\item[3)] Every end flat is a maximal flat.
\item[4)] The stabilizer in $\Gamma$ of a maximal flat is an end group. 
\item[5)] Every parabolic subgroup is contained in an end group.
\item[6)] Every parabolic subgroup is  conjugate in $\PGL(n+1,\RR)$ into $\PO(n,1)$.
\item[7)] Every element of $\pi_1M$ is strongly hyperbolic or contained in an end group.
\item[8)] The set $X=\{x\in\Fr\Omega: \gamma(x)=x\ \text{ and } \gamma\text{ is strongly hyperbolic}\}$ consists of round points.
\item[9)]  $\Omega'=\interior(\CH X)$  is the unique minimal, non-empty, properly convex set preserved by $\Gamma$.
 \item[10)]  $X$ is dense in $\Fr(\Omega)\setminus\Fcal$.
 \item[11)] $\Omega'$ does not contain a PET.
 \item[12)] The dual manifold $M^*$ has the same properties.
\end{itemize}
\end{theorem}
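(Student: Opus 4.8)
The plan is to prove (1)--(12) in the listed order, in four groups: the combinatorics of flats (1--4), the structure of parabolic subgroups (5--7), the distribution of strongly hyperbolic fixed points and the resulting minimal convex set (8--11), and the passage to the dual manifold (12). The standing inputs are Theorem~\ref{endflats} (every flat in $\Fr\Omega$ lies in an end flat, distinct end flats are disjoint, and $\Omega$ has no PET), Lemma~\ref{gencuspbdy} (an end flat is a flat simplex), Lemma~\ref{disjointflats}, and the relative hyperbolicity of $\pi_1M$ rel the end groups. For (1)--(4): a maximal flat $V$ lies in an end flat $W$ by Theorem~\ref{endflats}; when $\rank\ge1$ the end flat $W$ is itself a flat, so maximality forces $V=W$, giving (2). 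If a positive-dimensional end flat $V$ were properly contained in a flat $V'$, then $V'$ lies in an end flat $W'$, so $V\subseteq V'\subseteq W'$ with $V,W'$ end flats meeting in more than a point; disjointness gives $V=W'=V'$, a contradiction, proving (3); then (1) follows from (2) and disjointness of end flats. For (4), if $V=\bdy_\infty U$ is the end flat of a component $U$ of $\pi^{-1}E$ and $\gamma\in\Gamma$ fixes $V$ setwise, then $\gamma U$ is again a component of $\pi^{-1}E$ with end flat $\gamma V=V$, so $\gamma U=U$ by Lemma~\ref{disjointflats}; hence $\stab_\Gamma V=\stab_\Gamma U$, which is conjugate to $\pi_1E$.

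For (5)--(7) we use that $\pi_1M$ is hyperbolic rel the ends. A non-trivial projective-parabolic $g\in\Gamma$ has zero Hilbert translation length and (acting freely) no fixed point in $\Omega$; an element fixing two points of $\Fr\Omega$ with zero translation length along the segment joining them would fix an interior point, so $g$ cannot be loxodromic for the relative structure, and being infinite-order in a torsion-free group it is conjugate into an end group. If $P$ is a parabolic subgroup, pick $1\ne g\in P$, lying in a maximal peripheral conjugate $A$; any $h\in P$ lies in some maximal peripheral conjugate $B$, and since distinct peripheral conjugates of a relatively hyperbolic group meet trivially (torsion-freeness) while $g\in A\cap B$, we get $B=A$, so $P\le A$: this is (5). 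Statement (6): by (5) and the classification of generalized cusps \cite{BCL1}(0.2), a parabolic subgroup of an end group is conjugate into the horospherical $\RR^{n-1}\subset\PO(n,1)$, hence into $\PO(n,1)$. For (7): $\Gamma$ has no non-trivial elliptics (torsion-free); a parabolic element is in an end group by (5); and a hyperbolic element $g$ not in any end group is loxodromic, so the projection to $M$ of the axis $(w_-,w_+)$ is a bi-infinite quasigeodesic that does not penetrate any cusp arbitrarily deeply, forcing $w_\pm\notin\Fcal$; the lemma below then makes $w_\pm$ round points, and an element whose attracting and repelling fixed points are round has a unique simple real eigenvalue of largest modulus and a unique simple real eigenvalue of smallest modulus, i.e. is strongly hyperbolic.

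For (8)--(11) the key lemma is that the fixed points of a strongly hyperbolic $\gamma$ are round: the unique simple dominant real eigenvector attracts everything off the repelling hyperplane, so $w_+$ is an extreme, hence strictly convex, point of $\cl\Omega$, and applying the same to the $\gamma$-action on $\Omega^*$ (for which $\gamma$ is again strongly hyperbolic) makes the supporting hyperplane at $w_+$ unique, i.e. $w_+$ is also a smooth point; this is (8). For (10), a point $x\in\Fr\Omega\setminus\Fcal$ is an extreme point on no flat, so a geodesic ray from a fixed interior basepoint towards $x$ does not enter any cusp neighbourhood (else $x$ would lie on an end flat); hence it is shadowed by $\Gamma$-translates of a fixed compact set, and a limiting/ping-pong argument manufactures strongly hyperbolic elements whose attracting fixed points converge to $x$, proving $X$ is dense in $\Fr\Omega\setminus\Fcal$. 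For (9): since $\pi_1M$ is not the union of its end groups, (7) supplies strongly hyperbolic elements, so $X\neq\emptyset$, and $X$ (being $\Gamma$-invariant and dense in the large set $\Fr\Omega\setminus\Fcal$) is not contained in a projective hyperplane; thus $\Omega'=\interior(\CH X)$ is a non-empty, $\Gamma$-invariant, properly convex subset of $\Omega$. Any non-empty $\Gamma$-invariant properly convex $C$ contains in $\cl C$ the two fixed points of every strongly hyperbolic $\gamma\in\Gamma$ (limits of $\gamma^{\pm n}c$, $c\in C$), hence $X\subseteq\cl C$ and $\Omega'=\interior(\CH X)\subseteq C$; so $\Omega'$ is the unique minimum. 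Finally (11) holds because $\Omega'\subseteq\Omega$ and $\Omega$ has no PET by Theorem~\ref{endflats}.

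For (12): by Lemma~\ref{dual} the dual $M^*=\Omega^*/\Gamma^*$ is again a boundaryless properly convex manifold whose ends are generalized cusps with compact boundary, and $\pi_1M^*\cong\pi_1M$ is hyperbolic rel the same end groups and is not their union; so $M^*$ satisfies the hypotheses of the theorem and (1)--(11) apply to it. The main obstacle I anticipate is in (5), (7) and (10): converting ``$g$ is not conjugate into a peripheral subgroup'' into the projective-dynamical statements ``the ray or axis associated with $g$ avoids every cusp neighbourhood, so its ideal endpoints avoid every end flat'', together with the shadowing argument in (10) that produces strongly hyperbolic elements with prescribed attracting points. Once (5)--(8) are in place, the flat combinatorics (1)--(4), the convex-core statement (9), the no-PET statement (11), and the dualization (12) are essentially formal.
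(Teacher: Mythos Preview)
Your overall architecture matches the paper's, and items (1)--(4), (6), (8), (9), (11), (12) are handled essentially as the authors do. The substantive divergence is in (5), (7), and (10), where you route everything through the coarse dynamics of relative hyperbolicity (loxodromic/parabolic dichotomy, bounded horoball penetration for loxodromic axes, and a shadowing/ping-pong construction), whereas the paper stays entirely on the projective side and never invokes those dynamical facts.

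Concretely: for (5) the paper cites \cite{CLT1}(4.7), which says that any parabolic subgroup of $\PGL(\Omega)$ fixes a point $p\in\Fr\Omega$ and a supporting hyperplane through it; since every non-round boundary fixed point lies in an end flat, the whole parabolic subgroup stabilises that end flat and is therefore in an end group by (4). For (7) the paper observes that for a hyperbolic $\gamma$ the attracting and repelling sets $S_\pm\subset\Fr\Omega$ are convex; if either is not a single point it is a flat, hence lies in an end flat, which $\gamma$ must then preserve, so $\gamma$ is in an end group; otherwise both are singletons and the argument of \cite{CLT1}(2.8) gives strong hyperbolicity. For (10) the paper uses only the elementary fact that every point of $\Fr(\CH X)$ is a limit of $n$-simplices with vertices in $X$: if the limiting simplex degenerates it produces a flat through $p$, and if not, $p$ is a limit of vertices, i.e.\ of points of $X$.

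What each approach buys: the paper's route is shorter and completely sidesteps the obstacle you flagged, namely translating ``not conjugate into a peripheral'' into ``axis/ray avoids end flats''. Your route is in principle workable and has the virtue of making the relative-hyperbolicity hypothesis do visible work in (5) and (7), but it requires importing non-trivial facts about quasi-geodesics and horoball penetration, and your sketch for (10) (shadowing plus ping-pong to manufacture strongly hyperbolic elements with prescribed attractor) is a genuine project rather than a paragraph. One further remark: you deduce $\dim\Omega'=n$ in (9) from the density statement (10); the paper instead invokes irreducibility of the holonomy to get this in one line, so if you keep your order you should make the dependence of (9) on (10) explicit.
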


\begin{proof}  Theorem~(\ref{endflats}) implies (1), (2), (3) and (11).
Let $V=\cl(\widetilde C)\cap\Fr\Omega$ be the end flat corresponding to the component $\widetilde C\subset\pi^{-1}(C)$ for a generalized cusp $C\subset M$.
If $\gamma\in\Gamma$ stabilizes $V$, then $\gamma(\widetilde C)=\widetilde C$, so $\gamma$ is an the endgroup of $C$. This proves (4).

If $\Gamma_0\subset\Gamma$ is a parabolic subgroup, then by \cite{CLT1}(4.7)
$\Gamma_0$ preserves a hyperplane $H$ and a point $p\in H\cap\Fr\Omega$. The only points in $\Fr\Omega$
that are fixed by a parabolic are points in the end flat. Hence the end flat for every element of $\Gamma_0$ is the same one.
Thus $\Gamma_0$ is conjugate into an endgroup, which proves (5). The classification of generalized cusps in \cite{BCL1}, with (5) implies (6). 

Suppose $\gamma\in\Gamma$  is hyperbolic.  The attracting
and repelling sets $S_{\pm}\subset\Fr\Omega$ of $\gamma$ are flat. If one of them is not a single point, then it is contained
in maximal flat, and thus an end flat. But this implies $\gamma$ preserves the end flat and is therefore in an endgroup.
Thus, if $\gamma$ is not in an end group, then the proof of \cite{CLT1}(2.8) now shows that $\gamma$ is strongly hyperbolic, which proves (7).

The set $X$
 is not empty because there is $\gamma\in\pi_1M$ that is not conjugate
into an end group, so by (7) $\gamma$ is strictly hyperbolic. 
Thus if $\gamma(x)=x$ then $x$ is an attracting or repelling fixed
point of $\gamma$ and is a strictly convex point.
 Moreover $x$ is a smooth point because the fixed points of the action of $\gamma$ on the dual domain
 are strictly convex points. This proves (8)

Since $X$ is preserved by $\Gamma$ it follows that  $\Omega'$
 is $\Gamma$-invariant, so $\dim\Omega'=n$ because $\rho$ is irreducible.
Clearly $\Omega'\subset\Omega$, so $\Omega'$ is properly convex, which proves (9).

Every point  $p\in\Fr\Omega'$  is in the limit of a sequence  $n$-simplices with vertices in $X$. If this limit is not a single
point $p$ then it is a flat that contains $p$. Hence $p$ is in a maximal flat. Otherwise $p$ is a limit of points in $X$, which proves (10).
Finally,~(\ref{dual}) implies (12).
\end{proof}

\small
\bibliography{Quasirefs.bib} 
\bibliographystyle{abbrv} 

\end{document}